\documentclass[a4paper,11pt,reqno]{amsart}
\usepackage[utf8]{inputenc}
\usepackage[T1]{fontenc}
\usepackage[english]{babel}
\usepackage[
  left=2cm,
  right=2cm,
  top=2.5cm,
  bottom=2.5cm
]{geometry}
\usepackage{mathtools}
\usepackage{amssymb}
\usepackage{mathrsfs}
\IfFileExists{newtxtext.sty}{\usepackage{newtxtext}}{}
\usepackage{microtype}
\usepackage{xcolor}

\usepackage{enumitem}
\usepackage{array,booktabs}

\usepackage{tcolorbox}
\allowdisplaybreaks %sirve para permitir que LaTeX divida una ecuación de varias líneas entre dos páginas.

\numberwithin{equation}{section}

\makeatletter

\long\def\guardarfrag#1#2{%
  \expandafter\long\expandafter\def
  \csname frag@#1\endcsname{#2}%
}

\newcommand{\fragmento}[1]{%
  \@ifundefined{frag@#1}
    {%
      \PackageError{fragmentos}
        {Fragment `#1' is not defined}
        {Check the fragment name.}%
    }
    {%
      \csname frag@#1\endcsname
    }%
}

\makeatother

\newtheorem{proposition}{Proposition}[section]
\newtheorem{theorem}[proposition]{Theorem}
\newtheorem{lemma}[proposition]{Lemma}

\theoremstyle{definition}

\newtheorem{definition}[proposition]{Definition}

\newtheorem{remark}{Remark}

\newcommand{\din}{\textnormal{d}}
\newcommand{\funct}{\mathtt{L}}
\newcommand{\loc}{\textnormal{loc}}
\newcommand{\R}{\mathbb{R}}
\newcommand{\qpu}{{\bf q}}
\newcommand{\lbp}{{\bf c}}
\newcommand{\fnu}{\mathtt{F}}
\newcommand{\ftes}{\varphi}

\usepackage{etoolbox}

\makeatletter

\patchcmd{\@tocline}
  {\hfil}
  {\leaders\hbox to 0.6em{\hss.\hss}\hfill}
  {}{}

\renewcommand{\@pnumwidth}{1.5em}

\def\l@section{\@tocline{1}{0.6em}{0em}{}{}}         % Sección sin sangría
\def\l@subsection{\@tocline{2}{0.3em}{2em}{}{}}      % Subsección con 2em
\def\l@subsubsection{\@tocline{3}{0.2em}{4.7em}{}{}}   % Subsubsección con 4em

\patchcmd{\@tocline}
  {\vskip #1}
  {\vskip #1\relax}
  {}{}

\makeatother

\title{Asymptotic Behavior of Radial Solutions to Singular $p$-Laplacian Equations as $p\to1$}

\author{Juan Pablo Alcon Apaza}

\address{Juan Pablo Alcon Apaza, Departamento de Matem\'atica, Universidade Federal de Minas Gerais, 31270-901, Belo Horizonte - MG,  Brazil}

\email{juanpabloalconapaza@gmail.com}

\date{}

\begin{document}

\begin{abstract}
We study the singular limit $p\downarrow1$ for positive radial entire solutions $u_p$ of
\[
-\Delta_pu_p=F_p(|x|,u_p,|\nabla u_p|)u_p^{-\beta_p}
\quad\text{in }\mathbb R^n,
\]
where $n\ge2$, $1<p<\min\{2,\sqrt n\}$, and $0\le\beta_p\le p-1$. Assuming that
\[
F_p\to F_1\quad\text{locally uniformly},
\qquad
\frac{\beta_p}{p-1}\to\lbp\in[0,1]
\quad\text{as }p\downarrow1,
\]
we prove that, after passing to a subsequence, $u_{p_j}\to u$ in $C_{\loc}(\mathbb R^n)$ and
 $\nabla u_{p_j}\stackrel{*}{\rightharpoonup}\nabla u $
in $L^\infty_{\loc}(\mathbb R^n;\mathbb R^n)$, where $u\in W^{1,\infty}(\mathbb R^n)$ is radial, and  solves
\[
-\Delta_1u=\mathtt K(|x|)
\quad\text{in }\mathbb R^n.
\]
On $\{x\in\mathbb R^n\mid\mathtt G(|x|)<1\}$, we prove that $\mathtt K(|x|)=w^{-\lbp}F_1(|x|,u(x),0)$ and that $\nabla u=0$ a.e., where $u_{p_i} ^{p_i-1} \to w$. On the other hand, we show that if $\rho\mapsto F_1(r,t,\rho)$ is affine for every $r$ and $t$, then $\mathtt K(|x|)=w^{-\lbp}F_1(|x|,u(x),|\nabla u(x)|)$
 a.e. in $\mathbb R^n$.

For $n=1$ and $1<p<2$, we study
\[
\frac{\din}{\din x}\bigl(|u_p'|^{p-2}u_p'\bigr)
=\mathtt F_p(|x|,u_p,|u_p'|)u_p^{-\beta_p}
\quad\text{in }\mathbb R,
\]
and obtain analogous results as $p\downarrow1$.
\end{abstract}

\maketitle

\tableofcontents

\let\thefootnote\relax\footnote{2020 \textit{Mathematics Subject Classification}.  35J92, 35J75, 35B08, 35B40.}
\let\thefootnote\relax\footnote{\textit{Keywords and phrases}. Singular $p$-Laplacian, positive radial entire solutions, gradient dependence, $1$-Laplacian, functions of bounded variation.}

%%
%%
%%
%%
%%
%%
%%%
%%

\section{Introduction}

We study positive radial entire solutions of
\begin{equation}\label{31}
-\Delta_p u=F_p(|x|,u,|\nabla u|)u^{-\beta_p}
\quad\text{in }\mathbb R^n,
\end{equation}
where $n\ge2$, $1<p<\min\{2,\sqrt n\}$, $0\le\beta_p<p-1$, and $F_p$ satisfies assumptions \ref{1}--\ref{7} on page \pageref{1}.

Following the technique used in \cite[Proposition 3.1]{MiaoYang2008}, for each $p\in(1,\min\{2,\sqrt n\})$ we obtain a solution $u_p(x)=v_p(|x|)$ of \eqref{31} by applying the Schauder--Tychonoff theorem to an integral operator $T:Q\to C^2 ([0,\infty))$. The resulting function $v_p\in Q$ satisfies
\[
\xi_1^{-1}\ftes_p(r)\le v_p(r)\le\xi_1,
\qquad |v_p'(r)|\le\xi_2,
\]
where $\ftes_p(r):=\max\{\ell,r\}^{-\frac{n-p}{p-1}}$. These bounds on $v_p$ and $v_p'$ are the basis of our analysis as $p\downarrow1$, {with $\ell$, $\xi_1$, and $\xi_2$ independent of $p$}. The work of Mercaldo, Segura de Le\'on, and Trombetti \cite{MercaldoSeguraTrombetti2008} is a main motivation for this limit analysis.

For $n=1$, {we adapt to one dimension the construction of radial solutions used by Qi \cite{Qi2010} for $p\ge n\ge2$}. We obtain even solutions $u_p(x)=y_p(|x|)$ of
\begin{equation}\label{117}
\frac{\din}{\din x}\left(|u_p'(x)|^{p-2}u_p'(x)\right)
=\fnu_p\left(|x|,u_p(x),|u_p'(x)|\right)u_p(x)^{-\beta_p},
\qquad x\in\mathbb R,
\end{equation}
where $\beta_p\ge0$ and $1<p<2$. Under the corresponding assumptions on $\fnu_p$, these solutions satisfy
\begin{gather*}
1\le y_p(r)\le\xi_1(1+\ftes(r)),\qquad 0\le y_p'(r)\le\xi_2,\\
y_p(0)=1,\qquad y_p'(0)=0,
\end{gather*}
where $\ftes(r):=\max\{\ell,r\}$. These estimates also allow us to study the limit $p\downarrow1$ { when the constants are independent of $p$ and the one-dimensional limit assumptions hold}.

\subsection{Context and related work}

For fixed $p$, Gon\c{c}alves and Santos \cite{GoncalvesSantos2004} considered
\begin{equation*}
\left\{\begin{aligned}
-\Delta_p u&=\rho(x)f(u)&&\text{in }\mathbb R^n,\\
u&>0&&\text{in }\mathbb R^n,\\
u(x)&\to0&&\text{as }|x|\to\infty,
\end{aligned}\right.
\end{equation*}
with $f:(0,\infty)\to(0,\infty)$ possibly singular at $0$ and $\rho:\mathbb R^n\to[0,\infty)$.  For $n\ge3$, radial $\rho$, and $1<p<n$, their Theorem 1.1 gives a positive radial solution. The proof uses fixed point arguments, shooting, and subsolutions and supersolutions.

For $\lambda>0$, $p>1$, and a smooth bounded domain $\Omega\subset\mathbb R^n$, $n\ge2$, positive solutions of
\[
\left\{\begin{aligned}
-\Delta_p u&=\lambda f(u)&&\text{in }\Omega,\\
u&=0&&\text{on }\partial\Omega
\end{aligned}\right.
\]
have been studied under various assumptions on $f$ and $\Omega$, see \cite{zbMATH01545349, zbMATH04186196, zbMATH00147917} and the references therein.

When $f:(0, \infty) \rightarrow(0, \infty)$ and $q: \mathbb{R}^n \rightarrow(0, \infty)$ are continuous functions, and
\begin{equation*}
\int_1^{\infty}\left(\int_0^t f(s) \, \din s\right)^{-1 / p} \, \din t=\infty 
\end{equation*}
it has been shown in \cite{zbMATH01112213} that there {exist} entire radially symmetric solutions of the problem
\begin{equation*}
\Delta _p u=q{(x)} f(u), \quad \text { in } \mathbb{R}^n .
\end{equation*}

For equations with gradient dependence, Yang \cite{zbMATH05164262} provided sufficient conditions for
\begin{equation*}
\Delta_p u=f(x,u,\nabla u)\quad\text{in }\mathbb R^n
\end{equation*}
to admit infinitely many positive entire solutions, each bounded above and bounded away from zero. Wu  \cite{zbMATH05146781} studied bounded positive entire solutions of
\begin{equation*}
-\Delta u=f(x,u,\nabla u)u^{-\beta}
\quad\text{in }\mathbb R^n,\qquad n\ge3,
\end{equation*}
where $\beta\in(0,1)$.

Miao and Yang \cite{MiaoYang2008} { extended part of the results in} \cite{zbMATH05164262, zbMATH05146781}, and considered the problem
\begin{equation*}\label{eq:intro-miao-yang}
-\Delta _p u =f(x,u,\nabla u)u^{-\beta}
\quad\text {in }\mathbb R^n,
\end{equation*}
{ where} $1<p\le n$ { and} $0\le\beta<p-1$, and obtained { a} decaying positive entire { solution} by the sub--supersolution method.

For $p\ge n\ge2$ and $\beta\ge0$, Qi \cite{Qi2010} used the subsolution--supersolution method to establish the existence of positive entire solutions of
\begin{equation*}
\Delta_p u= \textnormal{f}(x,u,\nabla u)u^{-\beta}
\quad\text{in }\mathbb R^n.
\end{equation*}

Other results involving gradient dependence include the following. Guarnotta, Marano, and Moussaoui \cite{GuarnottaMaranoMoussaoui2022} established the existence of positive entire solutions for singular quasilinear convective systems in $\mathbb R^n$, while Gambera and Guarnotta \cite{GamberaGuarnotta2022} studied a strongly singular convective equation driven by a nonhomogeneous operator modeled on the $(p,q)$-Laplacian. For a survey, see \cite{GuarnottaLivreaMarano2022}.

\medskip

\noindent\textit{The limit problem $p\downarrow1$.} Among the early contributions to the theory of the $1$-Laplacian are the works of Andreu, Ballester, Caselles, and Maz\'on on the total variation flow \cite{Andreu2001DirFl, Andr2000MinFl, Andr2001MinVa}, together with the monograph by Andreu-Vaillo, Caselles, and Maz\'on \cite{Andr2004par}. In \cite{Andreu2001DirFl}, the authors used Anzellotti's pairing theory \cite{Anzellotti1983} to give a weak interpretation of the formal expression $Du/|Du|$. Other related contributions include Kawohl's study of a family of torsional creep problems \cite{Kawohl1990to} and Demengel's work on nonlinear partial differential equations involving the $1$-Laplacian and the critical Sobolev exponent \cite{Demengelf1999}. Degiovanni and Magrone \cite{Degio2009lin} studied a $1$-Laplacian version of the Brezis--Nirenberg problem by means of a nonsmooth linking argument. Molino Salas and Segura de Le\'on \cite{MolinoSal2018subc} used variational methods to construct nontrivial solutions of approximating $p$-Laplacian problems and then passed to the limit $p\downarrow1$. 

%{\color{red}Their model source is $|u|^{q-1}u$, with $0<q<1/(n-1)$; it is subcritical and satisfies $q>p-1$ for $p$ sufficiently close to $1$, which is the superlinearity condition relative to the $p$-Laplacian. Uniform $W_0^{1,1}(\Omega)$ bounds and a separate argument ensuring that the limits are nonzero yield the nontrivial solutions of their Theorem 1.1.}

Mercaldo, Segura de Le\'on, and Trombetti \cite{MercaldoSeguraTrombetti2008} studied the limit as $p \downarrow 1$ of weak solutions to Dirichlet $p$-Laplacian problems in bounded open domains $\Omega\subset\R^n$, $n\geq2$, with Lipschitz boundary and datum $f$. They considered separately the radial case $\Omega=B_R$, with nonnegative radially decreasing $f\in L^{n,\infty}(B_R)$, and the general case $f\in W^{-1,\infty}(\Omega)$. In the radial case, writing $f(x)=h(|x|)$, the solution has the representation
\begin{equation*}
v_p(r)=\int_r^R\left[s^{1-n}\int_0^s t^{n-1}h(t),\din t\right]^{1/(p-1)},\din s,\qquad 0<r<R.
\end{equation*}
The elementary limits $a^{1/(p-1)}\to0$ for $0\leq a<1$ and $a^{1/(p-1)}\to\infty$ for $a>1$, as $p  \downarrow 1$, explain the role of the value $1$ in the limiting behavior.

Molino and Segura de Le\'on \cite{MolinoSeguraLeon2022} studied the Gelfand-type problem
$$
\left\{\begin{aligned}
-\Delta_1 v&=\lambda f(v) && \text{in }\Omega,\\
v&=0 && \text{on }\partial\Omega,
\end{aligned}\right.
$$
where $\lambda\geq0$ and $f:[0,\infty)\to(0,\infty)$ is continuous, increasing, and unbounded, with $f(0)>0$. They proved the existence of a threshold $\lambda^*=h(\Omega)/f(0)$, where $h(\Omega)$ is the Cheeger constant of $\Omega$, such that there are no solutions when $\lambda>\lambda^*$, while the trivial function is always a solution when $\lambda\leq\lambda^*$.

De Cicco, Giachetti, and Segura de Le'on \cite{DeCiccoGiachettiSegura2019} obtained solutions of
$$
\left\{\begin{aligned}
-\Delta_1 u&=u^{-\gamma}f(x) && \text{in }\Omega,\\
u&=0 && \text{on }\partial\Omega,
\end{aligned}\right.
$$
as limits of solutions to the corresponding Dirichlet problems
$$
-\Delta_p u=u^{-\gamma}f(x)\qquad\text{in }\Omega.
$$
Here $\Omega\subset\R^n$ is a bounded open set with Lipschitz boundary, $0<\gamma\leq1$, and $0\leq f\in L^n(\Omega)$. For the more general source $h(u)f(x)$, see \cite{Marti2025opgl}, where $f\in L^m(\Omega)$ is nonnegative, with $m\geq1$, and $h:\mathbb{R}^+\to\mathbb{R}^+$ is continuous, possibly singular at the origin, and bounded at infinity.

Balducci \cite{Balducci2025} proved the existence of solutions to
$$
\left\{\begin{aligned}
-\Delta_1 u+g(u)|Du|&=h(u)f && \text{in }\Omega,\\
u&=0 && \text{on }\partial\Omega,
\end{aligned}\right.
$$
where $\Omega\subset\mathbb{R}^n$, $n\geq2$, is a bounded open set with Lipschitz boundary, $g$ is continuous and positive, possibly singular at the origin, and bounded at infinity, $h$ is continuous and nonnegative, possibly singular at the origin, and bounded at infinity, and $0\leq f\in L^n(\Omega)$. The existence theorem imposes additional compatibility conditions on the singular behavior of $g$ and $h$ near the origin. The case of continuous, bounded, and nonmonotone functions $g$ and $h$ is also covered in \cite{Balduc2024fin}.

\subsection{Main results}

Let $n\ge2$, {$0<\alpha\le1$}, and $1<p<\min\{2,\sqrt n\}$.
\begin{equation}\label{e6}
0\le\beta_p< p-1,\qquad
\ell^{-(n-p)/(p-1)}\le \xi  _1^2.
\end{equation}
The constants $\ell>0$, $\xi  _1>0$, and $\xi  _2>0$ are fixed and independent of \(p\).

The following assumptions are motivated by conditions (F1)--(F4) in \cite{MiaoYang2008}. Throughout, we assume that:
\begin{enumerate}[label=$(F_{\arabic*})$ ]
\item \label{1} $F_p\in C^{0,\alpha} _{\loc}([0,\infty)^3)$, $
\phi _p\in C^0([0,\infty)^2)$, and
\begin{equation}
0\le\phi _p(|x|,u) \le F_p(|x|,u,|Y|)
\label{e8}
\end{equation}
for all $x\in\mathbb R^n$,
$0<u\le\xi  _1$, and $
|Y|\le\xi  _2$.

\item \label{2}
For every \(r\ge0\), $u\mapsto\phi _p(r,u)$
 is nondecreasing on $[0,\xi  _1]$, and $(u,\rho)\mapsto F_p(r,u,\rho)$ is nondecreasing in each variable on  $[0,\xi  _1]\times[0,\xi  _2]$.

\item \label{3}
 
\begin{equation}
\xi  _1^{p-1-\beta_p}
\left(\frac{p-1}{n-p}\right)^{p-1}
\int_0^\ell t^{n-1}
{\phi _p\!\left(t,\xi  _1^{-1}\ftes _p    (t)\right)}\, \din t\ge1 ,
\label{e10}
\end{equation}
where $\ftes _p  (r):=\max\{\ell ,r\}^{-\frac{n-p}{p-1}}$.

\item \label{4}
\begin{equation}
\int_0^\infty
\left[
s^{1-n}\int_0^s t^{n-1}
F_p(t,\xi  _1,\xi  _2)\ftes _p    (t)^{-\beta_p}\, \din t
\right]^{1/(p-1)}ds
\le
\xi  _1^{\,1-\beta_p/(p-1)}.
\label{e11}
\end{equation}

\item \label{5}
For every \(s>0\),
\begin{equation}
s^{1-n}\int_0^s t^{n-1}
F_p(t,\xi  _1,\xi  _2)\ftes _p    (t)^{-\beta_p}\, \din t
\le
\xi  _2^{p-1}\xi  _1^{-\beta_p}.
\label{e12}
\end{equation}

\item \label{6} There is a constant
\(C>0\), independent of \(r\),
such that
\begin{equation}\label{eq:F6-bound}
F_p(r,\xi  _1,\xi  _2)\ftes _p    (r)^{-\beta_p}
<C , \qquad r\geq 0.
\end{equation}

\item \label{7} $ \int_0^\infty t^{n-1}
 F_p(t,\xi  _1,\xi  _2)
 \ftes _p    (t)^{-\beta_p}\, \din t<\infty$.
\end{enumerate}

\begin{remark}Conditions \ref{1} and \ref{7}, together with $\ell>0$ and $n>p^2$, imply, by Proposition \ref{10}, that
\begin{equation}\label{12}
\funct _{ \funct _A} (0) <\infty, 
\end{equation}
where
$$
\funct _A (r):=\int_r^{\infty}\left[s^{1-n} \int_0^s t^{n-1} A (t) \, \din t\right]^{\frac{1}{p-1}} \, \din s < \infty,
$$
and $A(r):= {F_p}(r , \xi _1 , \xi _2)(\xi _1 ^{-1} \ftes _p   (r)) ^{-\beta _p}$.

\end{remark} 

The first main result is the following.
\begin{theorem}\label{80}
Let $p_j\downarrow1$ and
\[
u_{p_j}(x)=v_{p_j}(|x|)
\]
be the positive radial solutions of \eqref{31} supplied by Proposition
\ref{25}. Assume \ref{1}--\ref{7} for $p=p_j$, with
$\ell,\xi_1,\xi_2$ independent of $j$, together with \eqref{32} and
\eqref{33}. Then, after extraction,
\begin{gather}
 u_{p_j}\to    u
 \quad\text{in }C_{\loc}(\mathbb R^n),
 \qquad
 \nabla u_{p_j}\stackrel{*}{\rightharpoonup}\nabla u
 \quad\text{in }L^\infty_{\loc}(\mathbb R^n;\mathbb R^n),
 \label{eq:80-compactness}\\
 u(x)=v(|x|),
 \qquad
 u\in W^{1,\infty}(\mathbb R^n),
 \qquad
 0\le u\le\xi_1,
 \qquad
 \|\nabla u\|_{L^\infty(\mathbb R^n)}\le\xi_2.
 \label{eq:80-u-bounds}
\end{gather}
Moreover, for some radial $w:\mathbb R^n\to(0,1]$,
\begin{gather}
 u_{p_j}^{p_j-1}\to    w
 \quad\text{a.e. and in }L^q_{\loc}(\mathbb R^n),
 \qquad 1\le q<\infty,
 \label{eq:80-w-convergence}\\
 \bigl(\max\{\ell,|x|\}\bigr)^{-(n-1)}
 \le w(x)\le1
 \quad\text{a.e.},
 \qquad
 w=1\quad\text{a.e. on }\{u>0\}.
 \label{eq:80-w-bounds}
\end{gather}
\begin{enumerate}[label=$(\roman*)$]
\item \label{111} There exist $\mathtt G\in W^{1,\infty}_{\loc}([0,\infty))$, $\mathtt K\in L^\infty_{\loc}([0,\infty))$, and ${\bf z}\in W^{1,\infty}_{\loc}(\mathbb R^n;\mathbb R^n)$ such that
\begin{gather}
 \mathtt G_{p_j}\to   \mathtt G
 \quad\text{in }C_{\loc}([0,\infty)),
 \qquad
 0\le\mathtt G\le1,
 \qquad
 \mathtt G(0)=0,
 \label{eq:80-G-limit}\\
 {\bf z}(x)=-\mathtt G(|x|)\frac{x}{|x|}
 \quad(x\ne0),
 \qquad
 |{\bf z}|\le1,
 \label{eq:80-z}\\
 -\operatorname{div}{\bf z}
 =\mathtt K(|x|),
 \qquad
 \mathtt K(r)=\mathtt G'(r)+\frac{n-1}{r}\mathtt G(r)
 \quad\text{for a.e. }r>0,
 \label{eq:80-divergence}\\
 ({\bf z},Du)=|Du|,
 \qquad
 {\bf z}\cdot\nabla u=|\nabla u|
 \quad\text{a.e. in }\mathbb R^n,
 \label{eq:80-calibration}\\
 (1-\mathtt G(r))|v'(r)|=0
 \quad\text{for a.e. }r>0.
 \label{eq:80-complementarity}
\end{gather}

\item \label{112} Set $\mathscr{C}_1:=\{r>0\mid\mathtt G(r)=1\}$ and $\mathscr{C}_<:=\{r>0\mid\mathtt G(r)<1\}$. Then,
\begin{equation}\label{95}
 \mathtt K(|x|)=
\left\{ \begin{aligned}
& w(x)^{-{\lbp}}F_1(|x|,u(x),0),
 & & {\text{for a.e. }x\in\mathbb R^n\text{ with }|x|\in\mathscr{C}_<,}\\
&  \dfrac{n-1}{|x|},
   & & {\text{for a.e. }x\in\mathbb R^n\text{ with }|x|\in\mathscr{C}_1,\ 
    |\mathscr{C}_1|>0}
 \end{aligned}\right. .
\end{equation}

Finally, there exist $r_0>0$ and  $ \delta\in(0,1)$ such that, for all sufficiently large $j$, 
\begin{equation}\label{eq:80-origin}
 \sup_{|x|\le r_0}|\nabla u_{p_j}(x)|
 \le \delta^{1/(p_j-1)},
\end{equation}
and
\begin{equation}\label{eq:80-origin-phase}
 \mathtt G(r)\le \delta<1
 \quad(0\le r\le r_0),
 \qquad
 \nabla u=0
 \quad\text{a.e. in }B_{r_0}.
\end{equation}
\end{enumerate}
\end{theorem}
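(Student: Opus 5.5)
The plan is to work entirely with the radial profiles $v_{p_j}$ and the normalized flux
\[
\mathtt G_{p}(r):=|v_p'(r)|^{p-1}=r^{1-n}\int_0^r t^{n-1}F_p\bigl(t,v_p,|v_p'|\bigr)v_p^{-\beta_p}\,\din t ,
\]
which I take to be the quantity meant by $\mathtt G_p$. It follows from integrating the radial form of \eqref{31}, with $v_p'(0)=0$ and $v_p'\le0$; the sign holds because the right-hand side is nonnegative.

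\textbf{Compactness and the limit $w$.} From Proposition \ref{25} we have the $j$-independent bounds $\xi_1^{-1}\ftes_p\le v_p\le\xi_1$ and $|v_p'|\le\xi_2$. Arzel\`a--Ascoli together with a diagonal argument then gives $u_{p_j}\to u$ in $C_{\loc}$, and Banach--Alaoglu gives $\nabla u_{p_j}\stackrel{*}{\rightharpoonup}\nabla u$. Radiality, $0\le u\le\xi_1$ and $\|\nabla u\|_\infty\le\xi_2$ pass to the limit. For $w$, note that $v_p^{p-1}$ is nonincreasing in $r$ and squeezed between $\xi_1^{-(p-1)}\max\{\ell,r\}^{-(n-p)}$ and $\xi_1^{p-1}$. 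Helly's selection theorem gives a further subsequence converging at every $r$, hence a.e. The limit is radial and satisfies $\max\{\ell,|x|\}^{-(n-1)}\le w\le1$, since both bounds converge as $p\downarrow1$. Dominated convergence then upgrades this to $L^q_{\loc}$. Where $u>0$, $v_p\to v>0$ forces $v_p^{p-1}\to1$.

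\textbf{Part \ref{111}.} Write $v_p^{-\beta_p}=(v_p^{p-1})^{-\beta_p/(p-1)}$. This is bounded by $\bigl(\xi_1^{-1}\ftes_p\bigr)^{-\beta_p}$, which is uniformly bounded on compacts. Combined with \ref{2} and \ref{6}, the integrand $g_p:=F_p v_p^{-\beta_p}$ satisfies $0\le g_p\le C$ uniformly on $[0,R]$. Consequently $\mathtt G_p(r)\le Cr/n$, and
\[
\mathtt G_p'=g_p-\tfrac{n-1}{r}\mathtt G_p
\]
is bounded by $C(1+(n-1)/n)$. The family $\mathtt G_p$ is therefore equi-Lipschitz on $[0,R]$, and a subsequence converges in $C_{\loc}$ to some $\mathtt G$ with $\mathtt G(0)=0$. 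The bound $\mathtt G\le1$ comes from $\mathtt G_p\le\xi_2^{p-1}\to1$, by \ref{5}. Passing to a weak$^*$ limit $g_{p_j}\stackrel{*}{\rightharpoonup}\mathtt K$ gives $\mathtt K=\mathtt G'+\frac{n-1}{r}\mathtt G$. The direct computation of $\operatorname{div}(-\mathtt G(|x|)x/|x|)$ then yields \eqref{eq:80-divergence}; the origin causes no trouble because $\mathtt G(r)=O(r)$.

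For complementarity I use that $\mathscr C_<$ is open because $\mathtt G$ is continuous. On a compact $I\subset\mathscr C_<$ we have $\mathtt G_{p_j}\le\delta<1$ for large $j$, so $|v_{p_j}'|\le\delta^{1/(p_j-1)}\to0$ uniformly on $I$, and hence $v'=0$ there. It follows that $(1-\mathtt G)|v'|=0$ a.e. Since $v'\le0$, this gives ${\bf z}\cdot\nabla u=\mathtt G|v'|=|v'|=|\nabla u|$. Because $u$ is Lipschitz, the Anzellotti pairing $({\bf z},Du)$ is the absolutely continuous density ${\bf z}\cdot\nabla u$, which gives \eqref{eq:80-calibration}.

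\textbf{Part \ref{112} and the origin.} This is where I expect the main difficulty: identifying the weak$^*$ limit $\mathtt K$ of $g_{p_j}$ pointwise. On compacts of $\mathscr C_<$, $|v_{p_j}'|\to0$ uniformly and $v_{p_j}\to v$ uniformly. By local uniform convergence $F_{p_j}\to F_1$ (assumption \eqref{32}) and continuity of $F_1$, we get $F_{p_j}(r,v_{p_j},|v_{p_j}'|)\to F_1(r,v,0)$ uniformly there. Next, $v_{p_j}^{-\beta_{p_j}}=(v_{p_j}^{p_j-1})^{-\beta_{p_j}/(p_j-1)}\to w^{-\lbp}$ a.e., using \eqref{33} and the a.e. convergence of $v_p^{p-1}$ with its positive lower bound. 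Boundedness plus a.e. convergence makes the weak$^*$ limit coincide with the pointwise one, which gives the first line of \eqref{95}. On $\mathscr C_1$, $\mathtt G\equiv1$, so $\mathtt G'=0$ a.e. on that set (Lipschitz functions have vanishing derivative a.e. on level sets), and therefore $\mathtt K=(n-1)/r$ there.

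Finally, for the origin estimates, choose $r_0$ with $Cr_0/n\le\delta<1$. Then $\mathtt G_{p_j}\le\delta$ on $[0,r_0]$, which gives \eqref{eq:80-origin}. Passing to the limit gives $\mathtt G\le\delta$ on $[0,r_0]$, and the complementarity argument above then gives $\nabla u=0$ in $B_{r_0}$.
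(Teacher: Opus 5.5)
Your proposal is correct and follows essentially the paper's route (Proposition~\ref{44} followed by Lemma~\ref{82}). The one real variation is how you get $(1-\mathtt G)|v'|=0$: you use the uniform decay $|v_{p_j}'|\le\delta^{1/(p_j-1)}$ on compact subsets of the open set $\mathscr C_<$, whereas the paper uses $\bigl|\mathtt G_{p_j}|v_{p_j}'|-|v_{p_j}'|\bigr|\le\sup_{0\le t\le\xi_2}|t^{p_j}-t|$ together with the weak-$*$ convergence $|v_{p_j}'|\stackrel{*}{\rightharpoonup}|v'|$; both arguments work.

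One small repair is needed. The constant in \ref{6} is not assumed independent of $j$, so the uniform bound for $F_{p_j}(r,v_{p_j},|v_{p_j}'|)v_{p_j}^{-\beta_{p_j}}$ on $[0,R]$ should come from the locally uniform convergence $F_p\to F_1$, as in Lemma~\ref{30}. Note also that this hypothesis is \eqref{33} and $\beta_p/(p-1)\to\lbp$ is \eqref{32}; your labels are swapped.
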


For the second main result of this work, we additionally assume that
\begin{equation}\label{33}
F_p\to    F_1
\qquad\text{locally uniformly in }[0,\infty)^3
\quad\text{as }p\downarrow1,
\end{equation}
where $F_1\in C^{0,\alpha}_{\loc}([0,\infty)^3)$, and that
\begin{equation}\label{32}
\frac{\beta _p }{p-1}\to    {\lbp}\in[0,1].
\end{equation}

\begin{theorem}\label{81}
Assume the hypotheses and conclusions of Theorem \ref{80}. Then
\begin{equation}\label{eq:81-global-1laplace}
 -\Delta_1u=\mathtt K(|x|)
 \qquad\text{in }\mathbb R^n,
\end{equation}
in the sense of Definition \ref{113}.

\begin{enumerate}[label=$({\rm \roman*})$]
\item\label{thm81-affine}
 If, for every $(r,t)\in[0,\infty)\times[0,\xi_1]$, the map $\rho\mapsto   F_1(r,t,\rho)$ is affine on $ [0,\xi_2]$, then
\begin{equation*}\label{eq:81-affine-source}
 \mathtt K(|x|)
 =w(x)^{-{\lbp}}
 F_1\!\left(|x|,u(x),|\nabla u(x)|\right)
 \quad\text{for a.e. }x\in\mathbb R^n.
\end{equation*}

\item\label{thm81-critical}
One has $|v'(r)|=0$ for a.e.  $r\in\mathscr{C}_<$, and
\begin{equation*}\label{eq:81-outside-C-source}
 \mathtt K(|x|)
 =w(x)^{-{\lbp}}
 F_1\!\left(|x|,u(x),|\nabla u(x)|\right)
 \quad\text{for a.e. }x\in\mathbb R^n
 \text{ with }|x|\in\mathscr{C}_<.
\end{equation*}

Suppose that ${E\subset\mathscr C_1}$ is measurable with $|E|>0$, and assume that
\begin{equation*}
 a(r):=
 \lim_{j\to\infty}
 \frac{\mathtt G_{p_j}(r)-1}{p_j-1}
 \in\mathbb R
 \qquad\text{for a.e. }r\in E.
\end{equation*}

Then
\begin{equation*}\label{eq:81-gradient-E}
 |v'(r)|=e^{a(r)}
 \qquad\text{for a.e. }r\in E,
\end{equation*}
and
\begin{equation*}\label{eq:81-source-E}
 \mathtt K(|x|)
 =w(x)^{-{\lbp}}
 F_1\!\left(|x|,u(x),|\nabla u(x)|\right)
 =\frac{n-1}{|x|}
 \quad\text{for a.e. }x\in\mathbb R^n
 \text{ with }|x|\in E.
\end{equation*}
Hence
\begin{equation*}\label{eq:81-identification-union}
 \mathtt K(|x|)
 =w(x)^{-{\lbp}}
 F_1\!\left(|x|,u(x),|\nabla u(x)|\right)
 \quad\text{for a.e. }x\in\mathbb R^n
 \text{ with }
 |x|\in\mathscr{C}_<\cup E.
\end{equation*}

\end{enumerate}
\end{theorem}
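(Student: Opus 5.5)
Given the conclusions of Theorem \ref{80}, I will verify Definition \ref{113} with the radial field ${\bf z}(x)=-\mathtt G(|x|)x/|x|$. Presumably the definition asks for: $u\in BV_{\loc}$ (here $W^{1,\infty}$); a field ${\bf z}\in L^\infty$ with $\|{\bf z}\|_\infty\le1$; $-\operatorname{div}{\bf z}=\mathtt K(|x|)$ in $\mathcal D'(\mathbb R^n)$; and $({\bf z},Du)=|Du|$ as measures. Most of this is already in \eqref{eq:80-z}--\eqref{eq:80-calibration}. The one point to check is the distributional divergence across the origin. There I use $\mathtt G(0)=0$ and $\mathtt G\in W^{1,\infty}_{\loc}$, so $|{\bf z}(x)|\le C|x|$ near $0$. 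Testing against $\varphi\in C_c^\infty$, I excise $B_\varepsilon$; the boundary term is bounded by $C\varepsilon\cdot\varepsilon^{n-1}\to0$, and $\mathtt K\in L^1_{\loc}$ because $\mathtt G(r)/r$ is bounded. Since $u$ is Lipschitz, the Anzellotti pairing reduces to ${\bf z}\cdot\nabla u$, and \eqref{eq:80-calibration} gives the calibration. This proves \eqref{eq:81-global-1laplace}.

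For the source identification, I pass to the limit in the radial ODE $(r^{n-1}\mathtt G_{p_j})'=r^{n-1}F_{p_j}(r,v_{p_j},|v_{p_j}'|)v_{p_j}^{-\beta_{p_j}}$, with $\mathtt G_{p}=|v_p'|^{p-1}$, in the sense of distributions on $(0,\infty)$. By \eqref{32}, $v_{p_j}^{-\beta_{p_j}}=(v_{p_j}^{p_j-1})^{-\beta_{p_j}/(p_j-1)}\to w^{-\lbp}$ a.e., and this is uniformly bounded by the lower bound on $w$ and the bound $\xi_1^{-1}\ftes_p\le v_p$. By \eqref{33} and uniform convergence of $v_{p_j}$, the only difficulty is the gradient argument: $|v'_{p_j}|$ converges only weakly-$*$. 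Hence $\mathtt K(r)$ is the weak-$*$ limit of $F_1(r,v,|v_{p_j}'|)w^{-\lbp}$.

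\emph{Case (i), affine.} If $F_1(r,t,\rho)=\alpha(r,t)+\gamma(r,t)\rho$, the limit is $\alpha(r,v)+\gamma(r,v)\,\text{wk-}\lim|v'_{p_j}|$. In the radial setting $v_{p_j}$ is monotone (nonincreasing, $v_p'\le0$), so $|v_{p_j}'|=-v_{p_j}'\rightharpoonup -v'=|v'|$. This gives the identity a.e. The monotonicity is the crucial observation; I would confirm it from the integral representation of the solution in Proposition \ref{25}.

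\emph{Case (ii) on $\mathscr C_<$.} Where $\mathtt G(r)<1$, locally uniform convergence gives $\mathtt G_{p_j}\le\delta'<1$ on compact subsets. Then $|v'_{p_j}|=\mathtt G_{p_j}^{1/(p_j-1)}\le\delta'^{1/(p_j-1)}\to0$ uniformly there, so the gradients converge strongly to $0$. This yields $v'=0$ and the identity with $\rho=0$, which is consistent with \eqref{95}. The only care needed is that $\mathscr C_<$ is relatively open, so it can be covered by such compacts.

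\emph{On $E$.} Write $|v'_{p_j}|=\exp\bigl(\log\mathtt G_{p_j}/(p_j-1)\bigr)$. Since $\log\mathtt G_{p_j}=\log(1+(\mathtt G_{p_j}-1))\sim \mathtt G_{p_j}-1$, we get $|v'_{p_j}(r)|\to e^{a(r)}$ pointwise a.e. on $E$. The gradients are bounded by $\xi_2$ and monotone in sign, so dominated convergence gives strong $L^1$ convergence on $E$. Identifying the weak-$*$ limit then gives $|v'|=e^{a}$ on $E$, and continuity of $F_1$ gives $\mathtt K=w^{-\lbp}F_1(\cdot,u,|\nabla u|)$ on $E$. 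Finally, $\mathtt G=1$ on $E$ and $\mathtt G'=0$ a.e. on level sets of a Sobolev function, so $\mathtt K=(n-1)/r$ there.

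The main obstacle is the weak-to-strong passage for the nonlinear gradient dependence. Two points need care: localizing the weak-$*$ limit to the set $E$, which I would handle by testing with $\chi_E\psi$ using $L^1$ convergence, and justifying the sign of $v_p'$.
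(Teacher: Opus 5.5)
Your proposal is correct. For the $1$-Laplacian identity and for part (ii) you follow essentially the paper's route. On $\mathscr C_<$ this is Lemma~\ref{82}; you use uniform smallness of $\mathtt G_{p_j}$ on compact subsets where the paper uses pointwise convergence plus dominated convergence. On $E$ it is Lemma~\ref{65}. Your re-check of the divergence across the origin is harmless but unnecessary, since $-\operatorname{div}{\bf z}=\mathtt K(|x|)$ in $\mathcal D'(\mathbb R^n)$ is already among the assumed conclusions of Theorem~\ref{80}.

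The genuine difference is part (i).

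\emph{The paper's route.} It derives (i) from Proposition~\ref{64}. First it shows $F_1(|x|,u,|\nabla u_{p_j}|)\stackrel{*}{\rightharpoonup} w^{\lbp}\mathtt K(|x|)$. It then uses Mazur's lemma to build convex combinations of $|\nabla u_{p_j}|$ converging strongly to $|\nabla u|$. This proves the one-sided inequality when $\rho\mapsto F_1$ is convex and the reverse inequality when it is concave, and the affine case is both together.

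\emph{Your route.} You write $F_1(r,t,\rho)=\alpha(r,t)+\gamma(r,t)\rho$ with continuous $\alpha,\gamma$ and pass to the weak-$*$ limit directly. This needs only $|v'_{p_j}|=-v'_{p_j}\stackrel{*}{\rightharpoonup}|v'|$ and the fact that $w^{-\lbp}\gamma(r,v)$ is a bounded multiplier.

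\emph{What each buys.} Your argument is shorter and avoids Mazur's lemma altogether, but it uses linearity in $\rho$ essentially. The paper's argument also yields the one-sided bounds \eqref{eq:convex-bound-43} and \eqref{eq:concave-bound-43} for merely convex or concave dependence on the gradient.

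Your ODE formulation has one small further advantage. It identifies $\mathtt K$ as the weak-$*$ limit of the whole sequence $\mathtt K_{p_j}$, because any weak-$*$ limit point must equal $\mathtt G'+\frac{n-1}{r}\mathtt G$. So you need only the stated conclusions of Theorem~\ref{80}, not the convergence \eqref{94} from Proposition~\ref{44}.
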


%\fragmento{frag1}

 %%
 %%
 %%
 %%
 %%
 %%
%%
 %%
 %%
 %%
 %

 \section{Preliminaries}

In this article, we use the following notation:
\begin{enumerate}[label=$(\roman*)$]
\item For every measurable set \(E \subset \mathbb{R}^m\), we denote its Lebesgue measure by $\mathcal{L}^m(E)=|E|$.

\item $\qpu:=\frac{n-p}{p-1}$.

\item We write
$$
\lbp:=\lim_{p\to 1}\frac{\beta_p}{p-1}\in[0,1]
$$

whenever the limit exists.

\item For $h\in L^0(\R)$ with $g\geq 0$, we define
$$
\funct_h(r):=\int_r^{\infty}\left[s^{1-n}\int_0^s t^{n-1}h(t)\,\din t\right]^{\frac{1}{p-1}}\,\din s.
$$

\item For $\ell,t\geq 0$, we define
$\ftes(t):=\max{\ell,t}$ and
$\ftes_p(t):=\max{\ell,t}^{-\frac{n-p}{p-1}}$.

\item For radial solutions $u_p(x)=v_p(|x|)$ of \eqref{31}, we use the notation (see page \pageref{115})
$$
\begin{gathered}
\mathtt{K}_p(r):=F_p\left(r,v_p(r),\left|v_p^{\prime}(r)\right|\right)v_p(r)^{-\beta_p},\\
\mathtt{G}_p(r):=r^{1-n}\int_0^r s^{n-1}\mathtt{K}_p(s)\,\din s
\quad (r>0), \quad \mathtt{G}_p(0):=0,\\
{\bf z}_p:=\left|\nabla u_p\right|^{p-2}\nabla u_p.
\end{gathered}
$$

\item For radial solutions $u_p(x)=y_p(|x|)$ of \eqref{117}, in the case $n=1$, we use the notation (see page \pageref{116})
\begin{gather*}
k_p(t):=\fnu_p\left(t,y_p(t),y_p^{\prime}(t)\right)y_p(t)^{-\beta_p},\\
G_p(t):=\int_0^t k_p(s),\din s=y_p^{\prime}(t)^{p-1},\\
z_p:=|u_p^\prime|^{p-2}u_p^\prime.
\end{gather*}

\item Let $\Omega\subset\mathbb R^n$ be open and let $f_j,f\in L^\infty_{\mathrm{loc}}(\Omega;\mathbb R^m)$. We write $f_j\stackrel{*}{\rightharpoonup}f$ in $L^\infty_{\mathrm{loc}}(\Omega;\mathbb R^m)$ if
\[
\int_U f_j(x)\cdot\varphi(x)\,\din x
\longrightarrow
\int_U f(x)\cdot\varphi(x)\,\din x
\]
for every open set $U\Subset\Omega$ and every $\varphi\in L^1(U;\mathbb R^m)$. 

\item For $[a_{ij}],[b_{ij}]\in\mathbb{R}^{n\times n}$, we write $[a_{ij}]:[b_{ij}]:=\sum_{i=1}^n\sum_{j=1}^n a_{ij}b_{ij}$.
\end{enumerate}

%%%%
%%
%%
%%
%%%
%%

 \subsection{Basic properties of functions of bounded variation}
\label{120}

We use the following measure-theoretic notation for the limit $p\downarrow1$. The basic references are
\cite{AmbrosioFuscoPallara2000, EvansGariepy2015}. We use the pairing between bounded
fields and $BV$ derivatives defined in
\cite{Anzellotti1983,ChenFrid1999,CrastaDeCicco2019}.

Throughout this subsection, $U\subset\mathbb R^n$ is open.
\begin{definition} Let $u \in L^1(U)$. We say that $u$ is a function of bounded variation in $U$ if its distributional derivative is represented by a finite Radon measure in $U$; that is,
\begin{equation*}
\int_{U} u  \phi_{ x_i} \, \din x=-\int_{U} \phi \, \din D_i u \quad \forall \phi \in C_c^{\infty}(U) . \quad i=1, \ldots, n 
\end{equation*}
for some $\R^n$-valued measure $D u=\left(D_1 u \ldots D_n u\right)$ in $U$. The vector space of all functions of bounded variation in $U$ is denoted by $B V(U)$.
\end{definition}

We denote
\begin{equation*}\label{eq:BVloc-definition-prelim}
BV_{\mathrm{loc}}(U)
:=
\left\{
 u\in L^1_{\mathrm{loc}}(U) \mid
 u\in BV(V)\ \text{for every }V\Subset U
\right\}.
\end{equation*}

For $u\in BV(U)$, define
\[
\|u\|_{BV(U)}:=\|u\|_{L^1(U)}+|Du|(U).
\]

We denote by $\mathcal{M}_{\loc}\left(U ; \mathbb{R}^m\right)$ the space of  $\mathbb{R}^m$-valued Radon measures on $U$. For $\mu \in \mathcal{M}_{\loc}\left(U ; \mathbb{R}^m\right)$, its total variation $|\mu| \in \mathcal{M}_{\loc}(U)$ is defined on every  {relatively compact} Borel set $B\subset U$ by
$$
|\mu|(B):=\sup \left\{\sum_{j=1}^{\infty}\left|\mu\left(B_j\right)\right| \mid B_j \subset U \text { Borel }, \quad B=\bigcup_{j=1}^{\infty} B_j, \quad B_i \cap B_j=\varnothing \text { for } i \neq j\right\} .
$$
For an arbitrary Borel set $B\subset U$, set $|\mu|(B):=\sup_{V\Subset U}|\mu|(B\cap V)$.

The polar decomposition \cite[Corollary~1.29]{AmbrosioFuscoPallara2000} gives {$\sigma\in[L^\infty(U,|\mu|)]^m$} such that 
\begin{equation}\label{121}
\left|\sigma\right|=1 \quad|\mu| \text {-a.e., } \quad
\mu=|\mu| \sigma.
\end{equation}
If $|\mu|(U)<\infty$, then $\sigma\in[L^1(U,|\mu|)]^m$.

By \cite[Section~5.1]{EvansGariepy2015}, if $u\in BV_{\loc}(U)$, then 
\begin{equation*}\label{eq:BV-variation-dual-prelim}
|Du|(V)
=
\sup\left\{
\int_V u\,\operatorname{div}\varphi\,\din x \mid 
\varphi\in C_c^1(V;\mathbb R^n),
\quad
|\varphi(x)|\le1
\right\}.
\end{equation*}
Here $V\Subset U$ is arbitrary.

For $u\in BV_{\loc}(U)$, Lebesgue's decomposition theorem gives
$$
D_i u = ( D_i u)_{\mathrm{ac}}+ ( D_i u)_{\mathrm{s}}
$$
where
$$
( D_iu)_{\mathrm{ac}} \ll \mathcal{L}^n, \quad ( D_i u)_{\mathrm{s}} \perp \mathcal{L}^n .
$$
Hence
$$
|D u| = |D u|_{\mathrm{ac}}+ | D u|_{\mathrm{s}}
$$
where $|D u|_{\mathrm{ac}} = |(( D_1 u)_{\mathrm{ac}} , \ldots , ( D_n u)_{\mathrm{ac}})|$ and $|D u|_{s} = |(( D_1 u)_{\mathrm{s}} , \ldots , ( D_n u)_{\mathrm{s}})|$, see \cite[Lebesgue decomposition theorem]{zbMATH03576139}.

For each $i=1,\ldots,n$, the Radon--Nikod\'ym theorem gives $f_i\in L^1_{\loc}(U)$ such that
$$
(D_i u)_{\mathrm{ac}} = \mathcal{L}^n \llcorner f_i
$$
that is,
$$
 (D_i u)_{\mathrm{ac}} (K)= \int _K  f_i \, \din x
$$
for every compact set $K\subset U$.

Let $\sigma$ be given by \eqref{121} with $\mu=Du$. Since we will consider functions $u\in W^{1,\infty}(\R^n)$, we recall that $W^{1,1}_{\loc}(U)\subset BV_{\loc}(U)$. If $u\in W^{1,1}_{\loc}(U)$, then  
$$
f_i = u_{x_i} \text { a.e } \qquad |Du|_s =0 .
$$
Furthermore, \cite[Section~5.1]{EvansGariepy2015} gives
\begin{equation}\label{122}
|Du|=\mathcal{L}^n\llcorner|\nabla u| , 
\end{equation}
and, we have $\mathcal{L}^n$-a.e.
$$
\sigma= \left\{ \begin{aligned}&\frac{\nabla u }{|\nabla u |} & & \text { if } \nabla u  \neq 0 \\ 
&0 &  &\text { if } \nabla u =0 .\end{aligned} \right.
$$

\medskip

\noindent {\bf Divergence-measure fields and the Anzellotti pairing.} We use the class
\begin{equation*}\label{eq:DMinfty-prelim}
\mathscr{X}(U)
:=
\left\{
{\bf z}\in L^\infty_{\mathrm{loc}}(U;\mathbb R^n) \mid
\operatorname{div}{\bf z}\in L_{\loc}^{\infty}(U)
\right\},
\end{equation*}
where $\operatorname{div}{\bf z}$ is the distribution defined by
$\left\langle \operatorname{div}{\bf z}, \zeta \right\rangle = - \int _U {\bf z} \cdot \nabla \zeta \, \din x $ for $\zeta\in C_c^\infty(U)$.

Let ${\bf z}\in \mathscr{X}(U)$ and $u\in BV_{\mathrm{loc}}(U)\cap L^\infty_{\mathrm{loc}}(U)$. The Anzellotti pairing is
defined distributionally by
\begin{equation*}\label{eq:Anzellotti-pairing-prelim}
\langle({\bf z},Du),\zeta \rangle
=
-\int_U u\zeta \operatorname{div}{\bf z} \,\din x
-\int_Uu\,{\bf z}\cdot\nabla\zeta \,\din x,
\qquad
\zeta \in C_c^\infty(U).
\end{equation*}

 The pairing is a Radon measure and satisfies, for every $V\Subset U$ and every Borel set $B\subset V$,
\begin{equation*}\label{eq:Anzellotti-estimate-prelim}
|({\bf z},Du)|(B)
\le
\|{\bf z}\|_{L^\infty(V;\mathbb R^n)}|Du|(B);
\end{equation*}
see the proof of \cite[Theorem~3.3]{CrastaDeCicco2019}.

\begin{definition}\label{113}
For $u\in BV_{\mathrm{loc}}(\mathbb R^n)\cap L^\infty_{\mathrm{loc}}(\mathbb R^n)$ and $f\in{L_{\loc}^\infty}(\mathbb R^n)$, we write
\[
-\Delta_1u=f
\]
if there exists ${\bf z}\in \mathscr{X}(\mathbb R^n)$ such that
\begin{equation*}\label{eq:1laplace-definition-43}
-\operatorname{div}{\bf z}=f ,
\qquad |{\bf z}|\leq1\quad\mathcal L^n\text{-a.e.},
\qquad ({\bf z},Du)=|Du|.
\end{equation*}
{The first equality holds in the sense of distributions, and the last equality holds in the sense of Radon measures.}
\end{definition}

 \subsection{An iterated radial integral estimate}

For a measurable $h:[0,\infty)\to[0,\infty)$ and $r\ge0$, define
\begin{equation}
\funct_{h}(r)
:=
\int_r^{\infty}
\left[
 s^{1-n}\int_0^s t^{n-1}h(t)\, \din t
\right]^{{\qpu}} \, \din s, \qquad {\qpu}:=\frac{1}{p-1}.
\label{eq:Phi}
\end{equation}

\begin{proposition}\label{10}
Let $1\le m<\infty$, $\alpha\in\mathbb R$, and $\theta>0$.
There exists $C>0$, independent of $h$, such that
\begin{equation}
\funct_{\funct_{h}}(0)
\le
C\left(
\int_0^{\infty} t^{\alpha}h(t)^m\,\din t
\right)^{\theta}
\label{eq:main-general}
\end{equation}
for every measurable $h:[0,\infty)\to[0,\infty)$ with finite right-hand side if and only if
\begin{equation}
 n>p^2,\qquad
 1\le m\le {\qpu} ^2,\qquad
 \alpha=mp^2-1,\qquad
 \theta=\frac{{\qpu} ^2}{m}.
\label{eq:conditions}
\end{equation}

Under \eqref{eq:conditions}, define $\rho\in[1,{\qpu} ^2]$ by
\begin{equation}
\frac{1}{\rho}
=
1+\frac{1}{{\qpu} ^2}-\frac{1}{m}.
\label{eq:rho}
\end{equation}
Then one may take
\begin{equation}
C=
\bigl[p(n-p)\bigr]^{- {\qpu}}
\bigl[\rho(n-p^2)\bigr]^{-  {\qpu}^2/\rho}.
\label{eq:constant}
\end{equation}

\end{proposition}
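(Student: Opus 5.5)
The ``only if'' direction I would handle with scaling and explicit test functions. The ``if'' direction I would reduce to a one-dimensional weighted Hardy--Bliss inequality. Throughout, $\qpu=1/(p-1)$ as in \eqref{eq:Phi}, and I use $\qpu+1=p\qpu$.

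\emph{Necessity.} First I will record two scaling identities that follow from the substitution $t=R\tau$. For $c>0$, $R>0$ and $g_{R}(t):=g(t/R)$ one gets
\[
\funct_{cg}=c^{\qpu}\funct_g,\qquad \funct_{g_R}(r)=R^{\qpu+1}\funct_g(r/R).
\]
Iterating gives $\funct_{\funct_{\lambda h}}(0)=\lambda^{\qpu^2}\funct_{\funct_h}(0)$ and $\funct_{\funct_{h_R}}(0)=R^{(\qpu+1)^2}\funct_{\funct_h}(0)$. The right-hand side of \eqref{eq:main-general} scales like $\lambda^{m\theta}$ and $R^{(\alpha+1)\theta}$, respectively.

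Comparing the exponents in $\lambda$ forces $\theta=\qpu^2/m$. Comparing the exponents in $R$ then gives $(\alpha+1)\qpu^2/m=(\qpu+1)^2=p^2\qpu^2$, that is, $\alpha=mp^2-1$.

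Next I take $h=\chi_{[0,1]}$. For $t>1$ one has $\funct_h(t)\sim c\,t^{-(n-p)/(p-1)}$, which is finite only if $n>p$. The outer integrand then behaves like $s^{-\qpu((n-p)/(p-1)-1)}$. This is integrable at infinity iff $(n-2p+1)/(p-1)^2>1$, which is equivalent to $n>p^2$.

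For the range of $m$, I would use concentrating families. Taking $h=\epsilon^{-a}\chi_{[1,1+\epsilon]}$ and letting $\epsilon\to0$ with $a$ tuned should give the failure for $m<1$. Sums of dilated bumps, or power profiles $t^{-\gamma}$ cut off at a logarithmic scale, should give the failure for $m>\qpu^2$: the left side then diverges while $\int t^{\alpha}h^m$ stays bounded, which is the usual Lorentz-scale obstruction.

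\emph{Sufficiency.} Here I would write everything as iterated one-dimensional operators.
\begin{enumerate}[label=$(\arabic*)$]
\item I will use Fubini to rewrite $\int_0^s t^{n-1}\funct_h(t)\,\din t$. Since $\funct_h$ is decreasing with derivative $-[\,\cdot\,]^{\qpu}$, integration by parts gives
\[
\int_0^s t^{n-1}\funct_h(t)\,\din t=\frac{s^n}{n}\funct_h(s)+\frac1n\int_0^s \sigma^{n}\bigl[\sigma^{1-n}H(\sigma)\bigr]^{\qpu}\din\sigma,
\]
where $H(\sigma)=\int_0^\sigma t^{n-1}h$.
\item I will bound $\funct_h(s)$ by a Hardy-type estimate. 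The constant $[p(n-p)]^{-\qpu}$ in \eqref{eq:constant} suggests this comes from a power computation with $\int_s^\infty \sigma^{-(n-1)\qpu}\,\din\sigma$-type integrals, giving a pointwise bound of the outer quantity by a weighted mean of $H^{\qpu}$.
\item The resulting quantity is a one-dimensional expression $\int_0^\infty\bigl[s^{-a}\int_0^s \sigma^{b}H(\sigma)^{\qpu}\din\sigma\bigr]^{\qpu}\din s$. Setting $f=H^{\qpu}$, I will apply Bliss's inequality (sharp Hardy with the exponent pair dictated by \eqref{eq:rho}) to pass from an $L^{\rho}$-type norm to $L^{\qpu^2}$.
\item Finally I will return from $H$ to $h$ via the Hardy/Bliss inequality with exponent $m$. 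Matching the powers reproduces $\alpha=mp^2-1$ and the factor $[\rho(n-p^2)]^{-\qpu^2/\rho}$.
\end{enumerate}
The endpoints are $m=\qpu^2$ (so $\rho=1$) and $m=1$ (so $\rho=\qpu^2$). In these cases Bliss degenerates to Minkowski's integral inequality or a plain Hardy inequality, and I would treat them separately.

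The main obstacle I expect is step $(3)$–$(4)$: choosing the intermediate exponent $\rho$ and the weights so that the two Hardy/Bliss steps compose exactly with the stated constant. The condition $n>p^2$ must appear there as the admissibility condition of the weighted Hardy inequality. The necessity of $m\le\qpu^2$ via counterexamples is the second delicate point.
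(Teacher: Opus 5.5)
Your necessity argument is essentially the paper's: the amplitude/dilation scaling is its Step~2, and the ``power profile cut off at a logarithmic scale'' is its Step~3. The sufficiency plan, however, has a genuine gap. You treat $\rho$ as an \emph{intermediate} Lebesgue exponent through which two Hardy/Bliss inequalities compose: $h$ in a weighted $L^m$, then $H$ in an $L^\rho$-type space, then $L^{\qpu^2}$. Such a chain breaks down, for example at $m=\qpu^2$. Every operator here carries pure power weights. Once the weights are fixed by scaling, each becomes a nonzero positive translation-invariant (convolution) operator on $\mathbb R$ after $t=e^x$. Such an operator is never bounded from $L^{r}(\mathbb R)$ to $L^{s}(\mathbb R)$ with $s<r$: test it on $N$ far-apart translates of one bump, and the ratio grows like $N^{1/s-1/r}$. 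At $m=\qpu^2$ one has $\rho=1$, so your step~(4) would have to send $h\in L^{\qpu^2}$ to $H$ in $L^1$ (or in $L^{\qpu}$, if you meant $H^{\qpu}\in L^\rho$), which lowers the exponent. Step~(2) also fails as stated. Since $\funct_h(s)=\int_s^\infty[\sigma^{1-n}H(\sigma)]^{\qpu}\din\sigma$ depends on $H$ on $(s,\infty)$, it admits no pointwise bound by a mean of $H^{\qpu}$ over $(0,s)$; only the integrated dual Hardy inequality in $L^{\qpu}$ is available.

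The missing idea is that there is exactly one off-diagonal step, and $\rho$ is the exponent of a \emph{kernel}, not of the data. Put $f(x)=e^{p^2x}h(e^x)$ and $s=e^x$. Then $s^{1-n}\int_0^s t^{n-1}h\,\din t=e^{(1-p^2)x}(k_a*f)(x)$, with $k_a(x)=e^{-ax}\mathbf 1_{[0,\infty)}(x)$ and $a=n-p^2$. The two outer operations become convolutions with $j_p(x)=e^{px}\mathbf 1_{(-\infty,0]}(x)$ and $k_b$, $b=n-p$, giving the exact identity $\funct_{\funct_h}(0)=\|k_b*j_p*(k_a*f)^{\qpu}\|_{L^{\qpu}(\mathbb R)}^{\qpu}$.

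The outer layers are diagonal on $L^{\qpu}$ with $L^1$ kernels and give $[p(n-p)]^{-\qpu}$. Your integration by parts survives at this level: it reads $k_b*G=\frac1n G+\frac1n\,k_b*(k_a*f)^{\qpu}$ with $G=j_p*(k_a*f)^{\qpu}$, and Minkowski gives $\frac1n\bigl(\frac1p+\frac1{n-p}\bigr)=\frac1{p(n-p)}$.

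The single off-diagonal step is Young's inequality $\|k_a*f\|_{L^{\qpu^2}}\le\|k_a\|_{L^\rho}\|f\|_{L^m}$, with $1+\qpu^{-2}=\rho^{-1}+m^{-1}$. Here $\|k_a\|_{L^\rho}=[\rho(n-p^2)]^{-1/\rho}$ is finite exactly when $n>p^2$, and $\|f\|_{L^m}^m=\int_0^\infty t^{mp^2-1}h(t)^m\,\din t$. No separate treatment of the endpoints $\rho=1$ and $\rho=\qpu^2$ is needed. Bliss's sharp constant could replace Young in this one step, but you would then have to check that it is at most $[\rho(n-p^2)]^{-1/\rho}$; Young gives \eqref{eq:constant} directly.

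On the necessity side there are three smaller points. First, the case $m<1$ is excluded by hypothesis. Second, $\chi_{[0,1]}$ gives no information when $\alpha\le-1$, since then $\int_0^1t^\alpha\,\din t=\infty$; use $\mathbf 1_{(1,2)}$ as the paper does. Third, the $m\le\qpu^2$ counterexample must actually be carried out. Take $h_\sigma(r)=\sigma^{-1/m}r^{-p^2}\mathbf 1_{[1,e^\sigma]}(r)$: its right-hand side equals $1$, and its log-variable datum is the constant $\sigma^{-1/m}$ on $[0,\sigma]$. This gives $\funct_{\funct_{h_\sigma}}(0)\ge c\,(\sigma-4)\sigma^{-\qpu^2/m}$, which tends to $\infty$ when $m>\qpu^2$.
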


\begin{proof}
We first prove sufficiency. Assume
\[
n>p^2,
\qquad
1\le m\le {\qpu} ^2.
\]

Let
\[
k_a(x):=e^{-ax}\mathbf 1_{[0,\infty)}(x), 
\qquad
j_p(x):=e^{px}\mathbf 1_{(-\infty,0]}(x),
\]
where $a:=n-p^2>0$.

Define
\[
f(x):=e^{p^2x}h(e^x),
\qquad x\in\mathbb R.
\]

A direct calculation gives
\begin{align}
&e^{(1-n)x}\int_0^{e^x}t^{n-1}h(t)\,\din t
\notag\\
&\qquad
=e^{(1-p^2)x}
\int_{-\infty}^{x}e^{-a(x-y)}f(y)\, \din y
=e^{(1-p^2)x}(k_a*f)(x).
\label{eq:first-average}
\end{align}

Since ${\qpu} (1-p^2)=-(p+1)$,
\begin{align}
\funct_{h}(e^x)
&=
\int_x^{\infty}
 e^{-pz}(k_a*f)(z)^{{\qpu}}\,\din z
\notag\\
&=
e^{-px}G(x),
\label{eq:first-iterate-log}
\end{align}
where
\begin{equation*}
G(x)
:=
\int_0^{\infty}e^{-p\sigma}(k_a*f)(x+\sigma)^{{\qpu}}\, \din \sigma
=
j_p*\bigl[(k_a*f)^{{\qpu}}\bigr](x).
\label{eq:G}
\end{equation*}

Using
\eqref{eq:first-iterate-log},
\begin{align}
&e^{(1-n)x}\int_0^{e^x}t^{n-1}\funct_{h}(t)\, \din t
\notag\\
&\qquad
=e^{-(p-1)x}
\int_{-\infty}^{x}e^{-b(x-y)}G(y)\, \din y
=e^{-(p-1)x}(k_b*G)(x),
\label{eq:second-average}
\end{align}
where
$$
k_b(x):=e^{-bx}\mathbf 1_{[0,\infty)}(x), \qquad b:=n-p>0.
$$

With the change of variables $s=e^x$, \eqref{eq:second-average} gives
\begin{equation} \label{eq:exact-log-representation}
\begin{aligned}
\funct_{\funct_{h}}(0)
= &\int _0 ^\infty \left[ s^{1-n} \int ^s_0 t^{n-1}\funct_h{(t)}\, \din t \right]^{{\qpu}} \, \din s\\
= & \int_{-\infty}^{\infty}(k_b*G)(x)^{{\qpu}}\,\din x\\
= & \|k_b*G\|_{L^{{\qpu}}(\mathbb R)}^{{\qpu}}.
\end{aligned}
\end{equation}

By \eqref{eq:rho} and $1\le m\le   {\qpu}^2$, one has $1\le\rho\le {\qpu} ^2$ and
\[
1+\frac1{{\qpu}  ^2}=\frac1m+\frac1\rho.
\]

Young's convolution inequality $\|f_1*f_2\|_{L^{p_1}}\leq\|f_1\|_{L^{p_2}}\|f_2\|_{L^{p_3}}$, where $1+1/p_1=1/p_2+1/p_3$, gives
\begin{equation}
\|k_a*f\|_{L^{{\qpu} ^2} (\R)}
\le
\|k_a\|_{L^{\rho} (\R) }\|f\|_{L^m (\R) }.
\label{eq:young1}
\end{equation}
Moreover,
\[
\|j_p\|_{L^1 (\mathbb{R})}=\frac1p,
\qquad
\|k_b\|_{L^1 (\mathbb{R}) }=\frac1{n-p},
\qquad
\|k_a\|_{L^{\rho} (\mathbb{R}) }
=\bigl[\rho(n-p^2)\bigr]^{-1/\rho}.
\]

Hence
\begin{align}
\|G\|_{L^{{\qpu}} (\mathbb{R})}
&\le
\|j_p\|_{L^1 (\mathbb{R})}
\|(k_a*f)^{{\qpu}}\|_{L^{{\qpu}} (\mathbb{R})}
\notag\\
&=
\frac1p\|k_a*f\|_{L^{{\qpu} ^2} (\mathbb{R})}^{{\qpu}}  ,
\label{eq:young2}
\end{align}
and then
\begin{align}
\funct_{\funct_{h}}(0)
&=
\|k_b*G\|_{L^{{\qpu}} (\mathbb{R})}^{{\qpu}}
\notag\\
&\le
\|k_b\|_{L^1 (\mathbb{R}) }^{{\qpu}}\|G\|_{L^{{\qpu}} (\mathbb{R})}^{{\qpu}}
\notag\\
&\le
\bigl[p(n-p)\bigr]^{- {{\qpu}}}
\|k_a*f\|_{L^{{{\qpu}}^2} (\mathbb{R})}^{{{\qpu}} ^2}
\notag\\
&\le
\bigl[p(n-p)\bigr]^{-  {{\qpu}}}
\bigl[\rho(n-p^2)\bigr]^{-  {{\qpu}}^2/\rho}
\|f\|_{L^m (\mathbb{R})}^{  {{\qpu}}^2}.
\label{eq:final-young}
\end{align}

With the change of variables $r=e^x$,
\begin{equation*}
\|f\|_{L^m(\mathbb R)}^m
=
\int_{-\infty}^{\infty}e^{mp^2x}h(e^x)^m\,\din x
=
\int_0^{\infty}r^{mp^2-1}h(r)^m\,  \din r.
\label{eq:f-norm}
\end{equation*}
This proves \eqref{eq:main-general} under \eqref{eq:conditions}.

\medskip

We now prove necessity.

\medskip

\noindent {\bf Step 1.} We prove that $n>p^2$ is necessary. Set
$$
h:=\mathbf{1}_{(1,2)}.
$$
If $n\le p$, then $\funct_{h}(r)=\infty$ for $r\geq 3$, so
\eqref{eq:main-general} is impossible.

Assume $n>p$. Let
\[
c_1:=\int_0^{\infty}t^{n-1}h(t)\, \din t>0.
\]
For $r\ge2$,
\begin{equation}
\funct_{h}(r)
=
c_1^{{\qpu}}\int_r^{\infty}s^{-  {{\qpu}}(n-1)}\, \din s
={\frac{p-1}{n-p}}c_1^{{\qpu}} r^{-\frac{n-p}{p-1}}.
\label{eq:tail-first}
\end{equation}

Put
\[
c_2:=\frac{n-p}{p-1}.
\]

If $n\le p^2$, then $c_2\le p<n$. Consequently, for $s\geq {2^{1+1/(n-c_2)}}$,
\[
\int_0^s t^{n-1}\funct_{h}(t)\, \din t
\geq {\frac{p-1}{n-p}}c_1^ {{\qpu}}  \int_2^s t^{n-1-c_2}\, \din t
\geq {\frac{(p-1)c_1^ {{\qpu}}}{2(n-p)(n-c_2)}}s^{n-c_2},
\]
and therefore
\[
\funct_{\funct_{h}}(0)
\ge
c_3\int^{\infty}_{{2^{1+1/(n-c_2)}}}s^{-  {{\qpu}}(c_2-1)}\, \din s.
\]
Here \({c_3:=\left[\frac{(p-1)c_1^ {{\qpu}}}{2(n-p)(n-c_2)}\right]^ {{\qpu}}}\).
Since $c_2\le p=1+1/{{\qpu}}$, one has ${{\qpu}}(c_2-1)\le1$, and the last integral diverges.
Thus $n>p^2$ is necessary.

\medskip

\noindent {\bf Step 2.} Assume that \eqref{eq:main-general} holds. We prove that
\begin{equation}
\theta=\frac{{{\qpu}}^2}{m},
\qquad
\alpha=mp^2-1.
\label{91}
\end{equation}

For \({h=\mathbf 1_{(1,2)}}\),
$0<\funct_{\funct_{h}}(0)<\infty$.
For $\sigma >0$,
\[
\funct_{\funct_{\sigma h}}(0)=\sigma ^{{{\qpu}}^2}\funct_{\funct_{h}}(0).
\]
Applying \eqref{eq:main-general} to $\sigma h$ and varying $\sigma$ gives
\begin{equation}
m\theta={{\qpu}}^2.
\label{eq:amplitude-scaling}
\end{equation}

Next, for $\lambda>0$, define $h_{\lambda}(t):=h(\lambda t)$. A
change of variables gives
\begin{equation}
\funct_{h_{\lambda}}(r)
=\lambda^{-pq}\funct_{h}(\lambda r),
\qquad
\funct_{\funct_{h_{\lambda}}}(0)
=
\lambda^{-p^2q^2}\funct_{\funct_{h}}(0).
\label{eq:dilation-left}
\end{equation}

On the other hand,
\[
\left(
\int_0^{\infty}t^{\alpha}h_{\lambda}(t)^m\, \din t
\right)^{\theta}
=
\lambda^{-(\alpha+1)\theta}
\left(
\int_0^{\infty}t^{\alpha}h(t)^m\, \din t
\right)^{\theta}.
\]
Varying $\lambda$ in \eqref{eq:main-general} and using
\eqref{eq:amplitude-scaling} and  \eqref{eq:dilation-left} yields
\[
(\alpha+1)\theta=p^2q^2,
\]
and therefore \eqref{91} holds.

\medskip

\noindent {\bf Step 3.} We prove that $m\le {{\qpu}}^2$ is necessary. Assume that \eqref{91} holds and
$n>p^2$. For $\sigma >4$, define
\begin{equation*}
h_\sigma (r):=
\sigma^{-1/m}r^{-p^2}\mathbf 1_{[1,e^\sigma]}(r).
\label{eq:test-hN}
\end{equation*}

Then
\begin{equation}
\int_0^{\infty}r^{mp^2-1}h_\sigma (r)^m\,dr=1.
\label{eq:test-norm}
\end{equation}

Let $a=n-p^2>0$ and $b=n-p>0$. For $1\le x\le \sigma-1$,
\[
(k_a*f_\sigma)(x)
\ge
\sigma ^{-1/m}\int_{x-1}^{x}e^{-a(x-y)}\, \din y
=c_4\sigma ^{-1/m},
\]
where $c_4= (1-e^{-a})/a$, and
\[
k_a(x)=e^{-a x} \mathbf{1}_{[0, \infty)}(x), \qquad f_\sigma (x)= e^{p^2 x} h_\sigma (e^x)=\sigma ^{-1/m}\mathbf 1_{[0,\sigma]}(x).
\]

Hence, for $1\le x\le \sigma-2$,
\begin{align*}
G_\sigma(x):= & \int_0^{\infty} e^{-p t}\left(k_a * f_\sigma\right)(x+t)^{{\qpu}} \, \din t\\
\ge & 
\int_0^1e^{-pt}(k_a*f_\sigma)(x+t)^  {{\qpu}} \, \din t \\
\ge & c_4^ {{\qpu}} e^{-p}\sigma ^{-   {{\qpu}}/m}.
\end{align*}

For $2\le x\le \sigma -2$,
\begin{align*}
(k_b*G_\sigma )(x)
\ge &
\int_{x-1}^{x}e^{-b(x-y)}G_\sigma (y)\, \din y\\
\ge & c_5 \sigma ^{-  {{\qpu}}/m},
\end{align*}
where $c_5 = c_4^{{\qpu}} e^{-p} (1-e^{-b})b^{-1}$.

By \eqref{eq:exact-log-representation},
\begin{equation*}
\funct_{\funct_{h_\sigma}}(0)
\ge
{c_5^  {{\qpu}}}(\sigma-4)\sigma^{-   {{\qpu}}^2/m}.
\label{eq:test-growth}
\end{equation*}
If $m> {{\qpu}}^2$, the right-hand side tends to
$\infty$, whereas \eqref{eq:test-norm} is identically equal to $1$.
Thus no uniform constant in \eqref{eq:main-general} can exist when
$m>  {{\qpu}}^2$.

This proves necessity and completes the proof.
\end{proof}

\begin{remark}
The proof of Proposition~\ref{10} uses the exponential substitution $s=e^x$ to obtain convolution estimates on $\mathbb R$. For properties of the isometry $(W_p f)(t)=e^{-t/p}f(e^{-t})$, $t\in\mathbb R$, from $L^p((0,\infty))$ onto $L^p(\mathbb R)$, and its role in reducing Hardy inequalities to convolution inequalities, see  \cite[Section~1.3.1, p.~15]{zbMATH06482033}  and \cite[Section~5.1, p.~52]{zbMATH01614151}.
\end{remark}

%{\color{orange}The continuity estimate for $T$ uses the following bound for the power map $s\mapsto s^{1/(p-1)}$, $1<p\le2$.}

%%
%%
%%
%%
%%
%%
%%
%%

 \section{Existence of radial solutions}\label{110}

For fixed $p$, we express the radial form of equation \eqref{31} as an integral fixed-point problem. Positivity, decay, and bounds on the derivative follow from explicit barriers, while compactness in $C_{\mathrm{loc}}^1([0,\infty))$ yields a solution. These estimates provide the family used in the limit analysis as $p\downarrow1$.

% ============================================================
% Photo 1
% ============================================================

We define
\[
\Lambda
:=
\left\{
h\in L^0 ([0,\infty))
\;\middle|\;
\int_{0}^{\infty}
\left[
s^{1-n}
\int_{0}^{s} t^{n-1}h(t)\, \din t
\right]^{\frac{1}{p-1}}
\, \din s
<\infty,
\quad
h\geq 0
\right\}.
\]

We also define
\[
\funct :\Lambda\to    C([0,\infty))
\]
by
\[
\funct _h(r)
:=
\int_{r}^{\infty}
\left[
s^{1-n}
\int_{0}^{s} t^{n-1}h(t)\, \din t
\right]^{\frac{1}{p-1}}
\, \din s.
\]

We begin with the following two technical results.
\begin{lemma} \label{16} 
$ $
\begin{enumerate}[label=$(\roman*)$]
    \item  \label{13} Suppose \(h_{1},h_{2}\in\Lambda\). If \(h_{1}\leq h_{2}\) a.e., then
    \[
    \funct _{h_{1}}\leq \funct _{h_{2}}
    .
    \]

    \item \label{14} If \(h\in\Lambda\), then, {for every \(r\geq0\)},
    \[
    \frac{p-1}{n-p}
    \left(
        \int_{0}^{{\ell}} t^{n-1}h(t)\, \din t
    \right)^{\frac{1}{p-1}}
    \varphi_{p}(r)
    \leq
    \funct _h(r)
    \]
    \[
    \leq
    \int_{0}^{\infty}
    \left(
        s^{1-n}
        \int_{0}^{s} t^{n-1}h(t)\, \din t
    \right)^{\frac{1}{p-1}}
    \, \din s.
    \]
\end{enumerate}

\end{lemma}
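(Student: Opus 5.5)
Part \ref{13} follows from the monotonicity of each step in the definition of $\funct_h$. If $h_1\le h_2$ a.e. and both are nonnegative, then for every $s>0$
\[
0\le s^{1-n}\int_0^s t^{n-1}h_1(t)\,\din t\le s^{1-n}\int_0^s t^{n-1}h_2(t)\,\din t .
\]
Since $\tau\mapsto\tau^{1/(p-1)}$ is nondecreasing on $[0,\infty)$, the same inequality holds after raising both sides to the power $1/(p-1)$. Integrating in $s$ over $(r,\infty)$, which is finite because $h_1,h_2\in\Lambda$, gives $\funct_{h_1}(r)\le\funct_{h_2}(r)$ for every $r\ge0$.

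For the upper bound in \ref{14}, the integrand is nonnegative, so enlarging the domain from $(r,\infty)$ to $(0,\infty)$ can only increase the integral. This gives $\funct_h(r)\le\funct_h(0)$, which is exactly the right-hand side.

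For the lower bound, set $I:=\int_0^\ell t^{n-1}h(t)\,\din t$. For $s\ge\ell$ we have $\int_0^s t^{n-1}h\ge I$ because $h\ge0$. We then split into two cases.
\begin{itemize}
\item If $r\ge\ell$, discard nothing:
\[
\funct_h(r)\ge I^{1/(p-1)}\int_r^\infty s^{-\frac{n-1}{p-1}}\,\din s
= I^{1/(p-1)}\frac{p-1}{n-p}\,r^{-\frac{n-p}{p-1}} .
\]
Here we use $\frac{n-1}{p-1}-1=\frac{n-p}{p-1}>0$, which holds since $n>p$. Because $r\ge\ell$, the right-hand side equals $\frac{p-1}{n-p}I^{1/(p-1)}\ftes_p(r)$.
\item If $0\le r<\ell$, discard the part of the integral over $(r,\ell)$, which is nonnegative. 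Then $\funct_h(r)\ge\funct_h(\ell)$, and by the previous case this is at least $\frac{p-1}{n-p}I^{1/(p-1)}\ell^{-\frac{n-p}{p-1}}=\frac{p-1}{n-p}I^{1/(p-1)}\ftes_p(r)$.
\end{itemize}

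There is no real obstacle in this argument. The only points to check are that the power $1/(p-1)$ preserves order on nonnegative numbers and that the exponent $\frac{n-1}{p-1}$ exceeds $1$ (guaranteed by $p<n$), so the tail integral converges to the stated constant.
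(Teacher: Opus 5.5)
Your proposal is correct and follows essentially the same route as the paper. Part (i) is monotonicity of each operation in the definition. For the lower bound in (ii), you replace $\int_0^s$ by $\int_0^{\ell}$ when $s\ge\ell$, and for $r<\ell$ you drop the outer integral over $(r,\ell)$. You also spell out the upper bound $\funct_h(r)\le\funct_h(0)$, which the paper's proof leaves implicit.
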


\begin{proof}

\noindent \ref{13} This follows directly from the definition of \(\funct \).

\medskip

\noindent \ref{14} For \(r\geq {\ell}\),
    \begin{align*}
    \funct _h(r)
    &=
    \int_{r}^{\infty}
    \left[
        s^{1-n}
        \int_{0}^{s} t^{n-1}h(t)\, \din t
    \right]^{\frac{1}{p-1}}
    \, \din s
    \\
    &\geq
    \int_{r}^{\infty}
    \left[
        s^{1-n}
        \int_{0}^{{\ell}} t^{n-1}h(t)\, \din t
    \right]^{\frac{1}{p-1}}
    \, \din s
    \\
    &\geq
    \left(
        \int_{0}^{{\ell}} t^{n-1}h(t)\, \din t
    \right)^{\frac{1}{p-1}}
    \int_{r}^{\infty}
    s^{\frac{1-n}{p-1}}
    \, \din s
    \\
    &=
    \frac{p-1}{n-p}
    \left(
        \int_{0}^{{\ell}} t^{n-1}h(t)\, \din t
    \right)^{\frac{1}{p-1}}
    r^{-\frac{n-p}{p-1}}.
    \end{align*}

    For \(r<{\ell}\),
     \begin{align*}
    \funct _h(r)
    \geq &
    \int_{{\ell}}^{\infty}
    \left[
        s^{1-n}
        \int_{0}^{s} t^{n-1}h(t)\, \din t
    \right]^{\frac{1}{p-1}}
    \, \din s\\
    \geq &
    \left(
        \int_{{\ell}}^{\infty}
        s^{\frac{1-n}{p-1}}
        \, \din s
    \right)
    \left(
        \int_{0}^{{\ell}}
        t^{n-1}h(t)\, \din t
    \right)^{\frac{1}{p-1}}
    \\
    = &
    \frac{p-1}{n-p}
    {\ell}^{-\frac{n-p}{p-1}}
    \left(
        \int_{0}^{{\ell}}
        t^{n-1}h(t)\, \din t
    \right)^{\frac{1}{p-1}}.
    \end{align*}
\end{proof}

\begin{lemma} \label{15}
Let $a,b\geq0$ and $1<p\leq2$. Then
\[
\left|
a^{\frac{1}{p-1}}
-
b^{\frac{1}{p-1}}
\right|
\leq
\frac{1}{p-1}
|a-b|
\left(
a^{\frac{2-p}{p-1}}
+
b^{\frac{2-p}{p-1}}
\right).
\]
\end{lemma}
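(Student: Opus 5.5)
The plan is to prove the inequality by the mean value theorem applied to $\psi(s):=s^{\frac{1}{p-1}}$ on $[0,\infty)$. Write $\gamma:=\frac{1}{p-1}$. Since $1<p\le2$, we have $\gamma\ge1$, so $\psi$ is $C^1$ on $[0,\infty)$ with $\psi'(s)=\gamma s^{\gamma-1}=\frac{1}{p-1}s^{\frac{2-p}{p-1}}$. Here $\gamma-1=\frac{2-p}{p-1}\ge0$, and when $p=2$ we use the convention $s^0=1$.

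First dispose of the trivial case $a=b$. Otherwise assume without loss of generality $0\le b<a$. The mean value theorem gives some $\xi\in(b,a)$ with
\[
a^{\frac{1}{p-1}}-b^{\frac{1}{p-1}}=\frac{1}{p-1}\,\xi^{\frac{2-p}{p-1}}(a-b).
\]
Because the exponent $\frac{2-p}{p-1}$ is nonnegative, $s\mapsto s^{\frac{2-p}{p-1}}$ is nondecreasing. Hence
\[
\xi^{\frac{2-p}{p-1}}\le a^{\frac{2-p}{p-1}}\le a^{\frac{2-p}{p-1}}+b^{\frac{2-p}{p-1}},
\]
since $b^{\frac{2-p}{p-1}}\ge0$. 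Taking absolute values and using symmetry in $a$ and $b$ gives the claim.

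Alternatively, one can avoid the mean value theorem by writing $\psi(a)-\psi(b)=\int_b^a\psi'(s)\,\din s$ and bounding $\psi'(s)\le\psi'(\max\{a,b\})$. This is the same argument in integral form.

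The only point needing care is the endpoint behaviour. At $p=2$ the exponent of $\psi'$ is $0$, and with $s^0=1$ the claimed bound reads $|a-b|\le 2|a-b|$, which is fine. For $p<2$, $\psi'$ is continuous at $0$, so the mean value theorem applies even when $b=0$. I do not expect a genuine obstacle; the bound even holds with the sum on the right replaced by the maximum.
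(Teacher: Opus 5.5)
Your proof is correct and is essentially the paper's argument: the mean value theorem for $s\mapsto s^{1/(p-1)}$, then monotonicity of $s\mapsto s^{(2-p)/(p-1)}$ to bound the derivative at the intermediate point by its value at the larger of $a,b$. Your explicit treatment of $p=2$ (with the convention $s^0=1$) is a small improvement, since the paper's proof only writes out the case $1/(p-1)>1$.
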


\begin{proof}
Assume that  $0\leq a<b$ and $t>1$. By the mean value theorem, there exists $c\in(a,b)$ such that
\[
b^ {{t }} -a^{{t }}
=
{{t }} c^{{{t }}  -1}(b-a)
\leq
{{t }} b^{  {{t }} -1}(b-a).
\]

Hence,
\[
|a^{{t }}  -b^{{t }}|
\leq
{{t }}\left(a^{{{t }} -1}+b^{ {{t }} -1}\right)|a-b|,
\]
for $a,b\geq0$.

This proves the result.\end{proof}

We define
\[
Q
:=
\left\{
g\in\Lambda\cap C^{1}([0,\infty))
\;\middle|\;
\xi_{1}^{-1}\varphi_{p}(r)
\leq g(r)\leq \xi_{1},
\quad
|g'(r)|\leq \xi_{2},
\quad
r\geq 0
\right\}.
\]

The set \(Q\) is nonempty and convex. Indeed,
\[
\xi_1^{-1}\ell^{-c_0}\left[1+\left(\frac{\max\{r-\ell,0\}}{c_2}\right)^2\right]^{-c_1/2} \in Q
\]
where  $c_0 = (n-p)/(p-1)$, $ c_1\in\left(p,\min\left\{c_0,n\right\}\right)$, and $
 c_2 \geq\max\left\{\ell,\xi_1^{-1} \xi _2^{-1}\ell^{-c_0} c_1 \right\}$.

% ============================================================
% Photo 4
% ============================================================

%%
%%
%%
%%

We define $\mathcal{K}:Q\to   \Lambda$ by
\[
\mathcal{K}g(r)
:=
F_{p}\bigl(r,g(r),|g'(r)|\bigr)
g(r)  ^{-\beta_{p}} .
\]
This map is well-defined by \ref{2} and \ref{4}.

Now we define $T:Q\to    C^{2}([0,\infty)) $ by
\[
Tg:=\funct _{\mathcal{K}g}.
\]

On \(C^{1}([0,\infty))\), we consider the topology generated by
the seminorms
$$
\mathbf{s}_m(u):=\sup _{x \in[0, m]}\left(|u(x)|+\left|u^{\prime}(x)\right|\right), \quad m=1,2, \ldots
$$

\begin{proposition}\label{25}
 Assume that $F_p \in C^{0, \alpha} _{\loc} \left([0, \infty)^3\right)$ and $\phi_p \in C^{0}\left([0, \infty)^2\right)$ satisfy \ref{1}--\ref{7}. Then there exists $g_0\in Q$ such that
$$
Tg_0=g_0.
$$

Equivalently,
$$
g_0(r)=\int_r^{\infty}\left[s^{1-n} \int_0^s t^{n-1} {F_p}\left(t, g_0(t),\left|g_0^{\prime}(t)\right|\right) g_0(t)^{-\beta _p} \, \din t\right]^{{\qpu}} \, \din s
.$$
\end{proposition}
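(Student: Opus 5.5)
We apply the Schauder--Tychonoff fixed point theorem to $T$ on the nonempty convex set $Q$, viewed as a subset of the locally convex space $C^1([0,\infty))$ with the seminorms $\mathbf s_m$. Three things must be checked: $T(Q)\subset Q$, $T$ is continuous on $Q$ in this topology, and $T(Q)$ is relatively compact. Since $Q$ is closed in $C^1([0,\infty))$ (the defining inequalities pass to locally uniform $C^1$ limits, and membership in $\Lambda$ follows from the uniform upper bound below), Schauder--Tychonoff then yields $g_0\in Q$ with $Tg_0=g_0$. The displayed integral identity is just $g_0=\funct_{\mathcal K g_0}$ written out.

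\textbf{Invariance.} Fix $g\in Q$. By \ref{2}, $F_p$ is monotone, and $g\ge\xi_1^{-1}\ftes_p$ gives $g^{-\beta_p}\le \xi_1^{\beta_p}\ftes_p^{-\beta_p}$. Hence $\mathcal Kg(t)\le \xi_1^{\beta_p}F_p(t,\xi_1,\xi_2)\ftes_p(t)^{-\beta_p}$. Lemma \ref{16}\ref{13}, Lemma \ref{16}\ref{14} and \ref{4} then give
\[
Tg\le \xi_1^{\beta_p/(p-1)}\xi_1^{1-\beta_p/(p-1)}=\xi_1 .
\]
For the lower bound, \eqref{e8} and the monotonicity of $\phi_p$, together with $g\le\xi_1$, give $\mathcal Kg\ge \phi_p(t,\xi_1^{-1}\ftes_p(t))\,\xi_1^{-\beta_p}$. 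Lemma \ref{16}\ref{14} and \ref{3} then give
\[
Tg\ge \frac{p-1}{n-p}\,\xi_1^{-\beta_p/(p-1)}\Bigl(\int_0^\ell t^{n-1}\phi_p\,\din t\Bigr)^{1/(p-1)}\ftes_p\ge \xi_1^{-1}\ftes_p .
\]
For the derivative, $(Tg)'(r)=-[r^{1-n}\int_0^r t^{n-1}\mathcal Kg]^{1/(p-1)}$. By \ref{5} (with the same factor $\xi_1^{\beta_p}$ absorbed), $|(Tg)'|\le\xi_2$. Finally, $Tg\in C^1$, and it is in $\Lambda$ because $\mathcal K(Tg)$ is again bounded by the same majorant, using \eqref{12}.

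\textbf{Compactness.} By \ref{6}, $\mathcal Kg\le C\xi_1^{\beta_p}$ uniformly in $g\in Q$. Hence $\Psi_g(r):=r^{1-n}\int_0^r t^{n-1}\mathcal Kg$ satisfies $|\Psi_g(r)|\le Cr/n$. Differentiating, $\Psi_g'=\mathcal Kg-\tfrac{n-1}{r}\Psi_g$ is bounded by $C\xi_1^{\beta_p}(1+\tfrac{n-1}{n})$, so $\{\Psi_g\}$ is uniformly Lipschitz. Then $(Tg)'=-\Psi_g^{1/(p-1)}$ with exponent $1/(p-1)>1$ on the bounded range $[0,\xi_2^{p-1}]$, so $\{(Tg)'\}$ is equi-Lipschitz on each $[0,m]$. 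Since $\{Tg\}$ is also uniformly bounded, Arzelà--Ascoli on each $[0,m]$ and a diagonal argument give relative compactness of $T(Q)$ in $C^1([0,\infty))$.

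\textbf{Continuity.} This is the step I expect to be the main obstacle, because of the tail integral over $[r,\infty)$ and the non-Lipschitz behaviour of $g\mapsto g^{-\beta_p}$ near $0$. Let $g_k\to g$ in $Q$ locally in $C^1$. Since $g\ge\xi_1^{-1}\ftes_p>0$ on compacts, $F_p$ is continuous, and $g_k,|g_k'|$ converge uniformly on $[0,m]$, we get $\mathcal Kg_k\to\mathcal Kg$ uniformly on $[0,m]$. Then $\Psi_{g_k}\to\Psi_g$ uniformly on $[0,m]$, and Lemma \ref{15} converts this into uniform convergence of $(Tg_k)'$, using the bounds $\Psi\le\xi_2^{p-1}$. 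For $Tg_k(r)=\int_r^\infty\Psi_{g_k}^{1/(p-1)}$, I split the integral at $M$. On $[r,M]$ the convergence is uniform. On $[M,\infty)$ the integrand is dominated by the $g$-independent majorant $[s^{1-n}\int_0^s t^{n-1}\xi_1^{\beta_p}F_p(t,\xi_1,\xi_2)\ftes_p^{-\beta_p}]^{1/(p-1)}$, which is integrable by \ref{4}, so its tail is small uniformly in $k$. This gives $\mathbf s_m(Tg_k-Tg)\to0$ for every $m$, completing the verification.
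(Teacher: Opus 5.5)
Your overall route is the paper's. It runs invariance via Lemma~\ref{16} and \ref{3}--\ref{5}, equi-Lipschitz derivatives from \ref{6}, and continuity by cutting the tail with \ref{4} and applying Lemma~\ref{15}, then Schauder--Tychonoff. Those three verifications are correct.

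The gap is the assertion that $Q$ is closed in $C^1([0,\infty))$. It is not, because the condition $g\in\Lambda$ does not survive locally uniform limits, and no bound valid on all of $Q$ restores it. Here is a counterexample. Let $q\in Q$ be the explicit nonincreasing element exhibited before the definition of $\mathcal K$. Let $\chi_k$ be a slowly decreasing cutoff equal to $1$ on $[0,k]$ and to $0$ far out, and set $g_k:=\chi_k\xi_1+(1-\chi_k)q$. For $k$ large these functions lie in $Q$; use \eqref{e6} to get $\xi_1\ge\xi_1^{-1}\ftes_p$, and note that $g_k=q$ near infinity, so $g_k\in\Lambda$. Yet $g_k\to\xi_1$ in $C^1_{\loc}$, while $\funct_{\xi_1}(0)=\int_0^\infty(\xi_1 s/n)^{1/(p-1)}\,\din s=\infty$. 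So the closed-convex-set version of Schauder--Tychonoff cannot be applied as you set it up.

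What is needed is that $\overline{T(Q)}$ is a compact subset of $Q$; the paper proves this in its Step~2.1. Your own estimates give it once the $\Lambda$-membership is argued correctly:
\begin{itemize}
\item Set $A:=\xi_1^{\beta_p}F_p(\cdot,\xi_1,\xi_2)\ftes_p^{-\beta_p}$. Then $\mathcal Kg\le A$, hence $Tg\le\funct_A$ by Lemma~\ref{16}\ref{13}.
\item Any $C^1_{\loc}$-limit $z$ of a sequence $Tg_k$ satisfies $\xi_1^{-1}\ftes_p\le z\le\funct_A$ and $|z'|\le\xi_2$.
\item Therefore $\funct_z(0)\le\funct_{\funct_A}(0)<\infty$ by monotonicity and \eqref{12}, so $z\in Q$.
\end{itemize}
This is also the right reason for $Tg\in\Lambda$ in your invariance step: the relevant bound is $Tg\le\funct_A$ combined with \eqref{12}, not a bound on $\mathcal K(Tg)$.

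Alternatively, you may run your whole argument on the set $\{g\in C^1([0,\infty)) : \xi_1^{-1}\ftes_p\le g\le\funct_A,\ |g'|\le\xi_2\}$. It is closed, convex and $T$-invariant, and it is contained in $Q$ because $\funct_A\le\xi_1$ by \ref{4} and \eqref{12} holds. With this repair your proof is complete.
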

\begin{proof}

Let \(g\in Q\). From \ref{2},
\begin{equation} \label{17}
\begin{aligned}
\mathcal{K}g (r)
&\leq
F_{p}(r,\xi_{1},\xi_{2})
\left(
    \xi_{1}^{-1}\varphi_{p}(r)
\right)^{-\beta_{p}}
\\
&=:A(r), \qquad r\geq 0.
\end{aligned}
\end{equation}

\noindent {\bf Part 1.} We show that \(T\) is continuous.

{Let \(g_0\in Q\), let \(\{g_i\}_{i=1}^{\infty}\subset Q\), and suppose that \(g_i\to g_0\) in \(C^1_{\loc}([0,\infty))\).} Let $R>0$ and \(\epsilon\in(0,1)\). We have
\begin{equation}\label{22}
\begin{aligned}
|Tg_{i}(r)-Tg_{0}(r)|
\leq
\Bigg|
&
\int_{r}^{\infty}
\left[
    \int_{0}^{s}
    \left(\frac{t}{s}\right)^{n-1}
    \mathcal{K}g_{i}(t)\, \din t
\right]^{\frac{1}{p-1}}
\, \din s
 \\
&-
\int_{r}^{\infty}
\left[
    \int_{0}^{s}
    \left(\frac{t}{s}\right)^{n-1}
    \mathcal{K}g_{0}(t)\, \din t
\right]^{\frac{1}{p-1}}
\, \din s
\Bigg|.
\end{aligned}
\end{equation}

% ============================================================
% Photo 7
% ============================================================

By \eqref{17} and \ref{4}, there exists \(m_{0}>0\) such that
\begin{equation}\label{23}
\int_{r}^{\infty}
\left[
    \int_{0}^{s}
    \left(\frac{t}{s}\right)^{n-1}
    {\mathcal{K}g_{i}(t)}\, \din t
\right]^{\frac{1}{p-1}}
\, \din s
<
\epsilon,
\end{equation}
for $r\geq m_{0}$ and {\(i=0,1,2,\ldots\)}.

Since \(F_p\) is H\"older continuous, for every $m_1>0$ there exists \(N>0\)
such that, if \(i>N\),
\begin{equation}\label{24}
|\mathcal{K}g_{i}(t)-\mathcal{K}g_{0}(t)|
<
\epsilon
\qquad
\text{for every }t\in[0,m_{1}].
\end{equation}

Then, from \eqref{22} and \eqref{23}, if \(r\in[0,m_{0}]\),
\begin{equation}\label{28}
\begin{aligned}
|Tg_{i}(r)-Tg_{0}(r)|
\leq &
\Bigg|
\int_{r}^{m_{0}}
\left[
    \int_{0}^{s}
    \left(\frac{t}{s}\right)^{n-1}
    \mathcal{K}g_{i}(t)\, \din t
\right]^{\frac{1}{p-1}}
\, \din s
\\
&-
\int_{r}^{m_{0}}
\left[
    \int_{0}^{s}
    \left(\frac{t}{s}\right)^{n-1}
    \mathcal{K}g_{0}(t)\, \din t
\right]^{\frac{1}{p-1}}
\, \din s
\Bigg|
+
2\epsilon\\
\leq &
\Bigg|
\int_{r}^{m_{0}}
\frac{1}{p-1}
\left[
    \int_{0}^{s}
    \left(\frac{t}{s}\right)^{n-1}
    |\mathcal{K}g_{i}(t)-\mathcal{K}g_{0}(t)|
    \, \din t
\right]
\\
&\cdot
\left(
    E_{0}^{\frac{2-p}{p-1}}(s)
    +
    E_{i}^{\frac{2-p}{p-1}}(s)
\right)
\, \din s
\Bigg|
+
2\epsilon,
\end{aligned}
\end{equation}
Here Lemma \ref{15} was used, with
\begin{equation}\label{26}
\begin{aligned}
E_{i}(s)
:= 
\int_{0}^{s}
\left(\frac{t}{s}\right)^{n-1}
\mathcal{K}g_{i}(t)\, \din t
\leq  \int_{0}^{s}
A(t)\, \din t
\end{aligned}
\end{equation}
where the inequality follows from \eqref{17}.

Thus, by \eqref{24},
\[
|Tg_{i}(r)-Tg_{0}(r)|
\leq
\frac{m_{0}^2}{p-1}
\epsilon\,
2\left(\int_{0}^{m_{0}}A(t)\, \din t\right)^{\frac{2-p}{p-1}}
+
2\epsilon,
\qquad
r\in [0,m_0], i>N.
\]

On the other hand, by Lemma \ref{15}, \eqref{24}, and \eqref{26},
\begin{equation} \label{27} 
\begin{aligned}
|(Tg_{i})^\prime (r)-(Tg_{0})^\prime(r)|
\leq &
\Bigg|
\left[
    \int_{0}^{r}
    \left(\frac{t}{r}\right)^{n-1}
    \mathcal{K}g_{i}(t)\, \din t
\right]^{\frac{1}{p-1}}
-
\left[
    \int_{0}^{r}
    \left(\frac{t}{r}\right)^{n-1}
    \mathcal{K}g_{0}(t)\, \din t
\right]^{\frac{1}{p-1}}
\Bigg|\\
\leq &
\Bigg|
\frac{1}{p-1}
\left[
    \int_{0}^{r}
    \left(\frac{t}{r}\right)^{n-1}
    |\mathcal{K}g_{i}(t)-\mathcal{K}g_{0}(t)|
    \, \din t
\right]
\left(
    E_{0}^{\frac{2-p}{p-1}}(r)
    +
    E_{i}^{\frac{2-p}{p-1}}(r)
\right)
\Bigg|\\
\leq & {\frac{2R}{n(p-1)}\epsilon
\left(\int ^R_0 A(t)\, \din t\right)^{\frac{2-p}{p-1}}},
\qquad {0<r\leq R},\ i>N_R.
\end{aligned}
\end{equation}

At \(r=0\), \((Tg_i)'(0)=(Tg_0)'(0)=0\). Moreover, for \(r\geq m_0\), \eqref{23} gives
\[
|Tg_i(r)-Tg_0(r)|<2\epsilon.
\]

From \eqref{28} and \eqref{27}, we conclude that \(T\) is continuous.

\medskip

\noindent {\bf Part 2.} We show that \(T(Q)\) is relatively compact and that its closure is contained in \(Q\).

\medskip

\noindent {\it Step 2.1} We show that
\[
\overline{T(Q)}^{\,C^1_{\loc}}\subset Q.
\]

Let $\{g_j \}_{j=1}^\infty \subset Q$ be such that
$$
T g_j \to z \in C^{1}_{\loc} ([0,\infty))
.$$

Consequently, by \eqref{17}, Lemma \ref{16}, and \ref{4},
\begin{equation} \label{18}
Tg_j (r)
\leq
\funct _A (r)
\leq
\xi_{1} , \qquad r\geq 0 , j\geq 1
\end{equation}

% ============================================================
% Photo 5
% ============================================================

Additionally, by \ref{1} and \ref{2},
\[
\mathcal{K}g_j\geq B,
\]
where
\[
B(r)
:=
\phi_{p}
\left(
    r,\xi_{1}^{-1}\varphi_{p}(r)
\right)
\xi_{1}^{-\beta_p}.
\]

Then, by Lemma \ref{16} and \ref{3},
\begin{equation}\label{19}
\begin{aligned}
Tg_j
&\geq
\funct _B\\
&\geq
\frac{p-1}{n-p}
\left(
        \int_{0}^{{\ell}}
    t^{n-1}B(t)\, \din t
\right)^{\frac{1}{p-1}}
\varphi_{p}
\\
&\geq
\xi_{1}^{-1}\varphi_{p}.
\end{aligned}
\end{equation}

From \eqref{18} and \eqref{19},
\begin{equation*} 
\xi_{1}^{-1}\varphi_{p}(r)
\leq
Tg_j(r)
\leq
\xi_{1} .
\end{equation*}

Therefore,
\begin{equation} \label{20}
\xi_{1}^{-1}\varphi_{p}(r)
\leq
z (r)
\leq
\xi_{1},
\qquad r\geq 0.
\end{equation}

\medskip

The fact that
\[
|z'(r)|\leq \xi_{2}
\]
is a consequence of \ref{5} and \eqref{17}:
\begin{equation} \label{21}
\left|(Tg_j)'(r)\right|
=
\left|
-\left(
r^{1-n}
\int_{0}^{r}
s^{n-1}\mathcal{K}g_j(s)\, \din s
\right)^{\frac{1}{p-1}}
\right|
\leq
\xi_{2},
\qquad  {r>0},\ j\geq 1.
\end{equation}
Moreover, \((Tg_j)'(0)=0\).

% ============================================================
% Photo 6
% ============================================================

From \eqref{20} and \eqref{21}, it remains to show that
\[
\funct _{z} (0)<\infty.
\]
By \eqref{18} and Lemma \ref{16}\ref{13}, $0\leq z\leq\funct_A$ implies
\[
\funct_z(0)\leq\funct_{\funct_A}(0)<\infty,
\]
by \eqref{12}.

Therefore,
 $$
\overline{T(Q)}^{\,C^1_{\loc}}\subset Q.
 $$

\medskip

\noindent {\it Step 2.2} We show that \(\overline{T(Q)}^{\,C^1_{\loc}}\) is compact.

\medskip

Let $g\in Q$. By \ref{5} and \ref{6}, we have
\begin{equation}\label{29} 
\begin{aligned}
\left|(Tg)^{\prime \prime}(r)\right|
=&
\Bigg|
-\frac{1}{p-1}\left(
r^{1-n}
\int_{0}^{r}
s^{n-1}\mathcal{K}g(s)\, \din s
\right)^{\frac{2-p}{p-1}}\\
&\cdot \left[  (1-n) r^{-n} \int _0 ^r t^{n-1} \mathcal{K} g{(t)} \, \din t + \mathcal{K} g (r)  \right] \Bigg|\\
 \leq & {\frac{\xi_2^{2-p}}{p-1}n C\xi_1^{\beta_p}}, \qquad {r>0}
\end{aligned}
\end{equation}

{By continuity, the same bound holds at $r=0$.}

Let $(g_j)\subset Q$. Fix $R>0$. By \eqref{18}, $\{Tg_j\}$ is uniformly bounded on $[0,R]$. By \eqref{21} and \eqref{29}, $\{Tg_j \}$ and $\{(Tg_j)'\}$ are equicontinuous on $[0,R]$.

By the Arzel\`a--Ascoli theorem, there exist a subsequence and a function $v\in C^1([0,R])$ such that
\begin{gather*}
Tg_{R_j}\to v, \quad
(Tg_{R_j})'\to v'
\qquad\text{uniformly on }[0,R].
\end{gather*}

By a diagonal argument, there exist a subsequence \(\{g_{i}\}_{i\in \mathcal{I}}\) and a function $v\in C ^1([0,\infty))$ such that
\[
Tg_{i}\to v
\quad\text{in }C^1([0,m])
\qquad\forall m\in\mathbb N.
\]

Thus, \(T(Q)\) is relatively compact, and \(\overline{T(Q)}^{\,C^1_{\loc}}\) is compact.

\medskip

\noindent {\bf Part 3.} {The set \(Q\) is nonempty and convex, \(T:Q\to Q\) is continuous, and \(T(Q)\) is relatively compact.} Hence, by the Schauder--Tychonoff theorem \cite[Theorem 10.1]{zbMATH07063009}, there exists  {\(g_0\in Q\)} such that
$$
g_0=T(g_0).
$$

This concludes the proof of the proposition.\end{proof}
 %%%%%%%%%%%%%%%%%%%%%%%%%%%%%%%%%%%%%%%%%%%%%%%%%%%%%%%%%%%%%%%%%%%%%%%%%%%%%%%%%%%%%%%%%%%%%%%%%%%%%%%%%%%%%%%%%%%%%%%%%%%%%%%%%%%%%%%%%%%%%%%%%%%%%%%%%%%%%%%%%%%%%%%%%%%%%%%%%%%%%%%%%%%%%%%%%%%%%%%%%%%%%%%%%%%%%%%%%%%%%%%%%%%%%%%%%%%%%%%%%%%%%%%%%%%%%%%%%%%%%%%%%%%%%%%%%%%%%%%%%%%%%%%%%%%%%%%%%%%%%%%%%%%%%%%%%%%%%%%%%%%%%%%%%%%%%%%%%%%%%%%%%%%%%%%%%%%%%%%%%%%%%%%%

%%%%%%%%%%%%%%%%%%%%%%%%%%%%%%%%%%%%%%%%%%%%%%%%%%%%%%%%%%%%%%%%%%%%%%%%%%%%%%%%%%%%%%%%%%%%%%%%%%%%%%%%%%%%%%%%%%%%%%%%%%%%%%%%%%%%%%%%%%%%%%%%%%%%%%%%%%%%%%%%%%%%%%%%%%%%%%%%%%%%%%%%%%%%%%%%%%%%%%%%%%%%%%%%%%%%%%%%%%%%%%%%%%%%%%%%%%%%%%%%%%%%%%%%%%%%%%%%%%%%%%%%%%%%%%%%%%%%%%%%%%%%%%%%%%%%%%%%%%%%%%%%%%%%%%%%%%%%%%%%%%%%%%%%%%%%%%%%%%%%%%
%%%%%%%%%%%%%%%%%%%%%%%%%%

\medskip

\noindent {\bf Example for $F_p$.}   For each integer $n\geq2$, set
\begin{equation*}\label{a6}
c_n:=\int_0^1\frac{t^{n-1}}{(1+t)^{2n}}\,dt
=\frac{((n-1)!)^2}{2(2n-1)!},\qquad
\ell:=\left[\frac{8(n-1)}{c_n}\right]^{2/(n-1)},
\end{equation*}
and choose
\begin{equation*}\label{a7}
\xi_1:=\ell,\qquad\xi_2:=1,\qquad\beta_p:=\frac{p-1}{2}.
\end{equation*}
Define, for $(r,u,\rho)\in[0,\infty)^3$,
\begin{equation*}\label{a8}
F_p(r,u,\rho)=F_1(r,u,\rho)
:=\frac{\left(1+\dfrac{u}{1+u}\right)
\left(1+\dfrac{\rho^2}{1+\rho^2}\right)}
{4\ell^{(n+1)/2}(1+r/\ell)^{2n}},
\end{equation*}
and
\begin{equation*}\label{a9}
\phi_p(r,u)=\phi_1(r,u)=F_1(r,u,0).
\end{equation*}

Then  \ref{1}--\ref{7} hold for every $1<p<\min\{2,\sqrt n\}$ and every $0<\alpha\leq1$. The constant in \ref{6} may be chosen as $C=1$, independently of $p$.

%%
%%
%%
%%
%%
%%

%%
%%
%%
%%
%%%
%%
%%
%%%

%%
%%
%%
%%

%%
%%
%%
%%
%%
%%

%%
%%
%%
%%
%%%
%%
%%
%%%

%%
%%
%%
%%

%%
%%
%%
%%

\section{The limit $p\to 1$}\label{43}

This section establishes the results on subsequence convergence and identification of the limits needed to prove Theorems \ref{80} and \ref{81}. We first obtain uniform estimates and extract jointly convergent subsequences, then analyze the sets where $\mathtt G<1$ and $\mathtt G=1$, and finally complete the proofs of the main theorems.

%The limit $p\to1$ and related $1$-Laplacian problems have been studied by Mercaldo--Segura de Le\'on--Trombetti \cite{MercaldoSeguraTrombetti2008}, Molino--Segura de Le\'on \cite{MolinoSeguraLeon2022}, Della Pietra--Oliva--Segura de Le\'on \cite{DellaPietraOlivaSegura2024}, and Balducci and collaborators \cite{BalducciOlivaPetitta2024,Balducci2025}.

\subsection{Uniform estimates and subsequential convergence}  Let
\[
u_p(x)=v_p(|x|)
\]
be the positive radial solution supplied by Proposition \ref{25}. Then
\begin{equation}\label{34}
\xi_1^{-1}\ftes _p   (|x|)
\leq u_p(x)\leq\xi_1,
\qquad
|\nabla u_p(x)|\leq\xi_2,
\qquad
v_p'\leq0
\end{equation}
where $\ftes _p   (r)=\max\{\ell,r\}^{-\frac{n-p}{p-1}}$.

Since \eqref{e6} holds for all $p>1$ sufficiently close to $1$, with
$\ell$ and $\xi_1$ independent of $p$, it follows that
\begin{equation*}\label{eq:ell-ge-one-43}
\ell\geq1.
\end{equation*}

Define, for $r\geq0$, \label{115}
\begin{equation*}\label{eq:Psi-A-z-43}
\mathtt{K}_p (r)
:= \mathcal K v_p (r) = F_p\!\left(r,v_p(r),|v_p'(r)|\right)v_p(r)^{-\beta _p },
\end{equation*}
\begin{equation*}\label{eq:Ap-def-43}
\mathtt{G}_p (r)
:= \left[-(T v _p )^\prime\right] ^{p-1} (r) =r^{1-n}\int_0^r s^{n-1}\mathtt{K}_p (s)\,\din s
\quad(r>0),
\qquad \mathtt{G}_p (0):=0,
\end{equation*}
and
\begin{equation*}\label{eq:zp-def-43}
{\bf z}_p
:=|\nabla u_p|^{p-2}\nabla u_p.
\end{equation*}

Then
\begin{equation}\label{eq:radial-identities-43}
-v_p'(r)=\mathtt{G}_p (r)^{\frac1{p-1}},
\qquad
\mathtt{G}_p (r)=|v_p'(r)|^{p-1},
\qquad
{\bf z}_p(x)=-\mathtt{G}_p (|x|)\frac{x}{|x|},
\end{equation}
for $r>0$ and $x\neq0$. Moreover,
\begin{equation}\label{eq:Ap-ode-43}
\mathtt{G}_p '(r)+\frac{n-1}{r}\mathtt{G}_p (r)=\mathtt{K}_p (r),
\qquad r>0.
\end{equation}

\begin{lemma}\label{30}
For every $R>0$ there exist $p_R>1$ and $0\leq C_R<\infty$, independent
of $p\in(1,p_R)$, such that
\begin{enumerate}[label = $(\roman*)$]
\item \label{47} \begin{equation}\label{eq:Psi-uniform-43}
\mathtt{K}_p (r)\leq C_R,
\qquad0\leq r\leq R,
\end{equation}

\item \label{48} \begin{equation}\label{eq:Ap-basic-bounds-43}
 \mathtt{G}_p (r)
\leq\min\left\{\frac{C_R}{n}r,\xi_2^{p-1}\right\},
\qquad0\leq r\leq R,
\end{equation}
and
\begin{equation}\label{eq:Ap-derivative-bound-43}
|\mathtt{G}_p '(r)|\leq\frac{2n-1}{n}C _R
\qquad\text{for a.e. }r\in(0,R).
\end{equation}

\item \label{49} 
\begin{equation}\label{42}
\left|
\partial_{x_i}
\left(
|\nabla u_p|^{p-2}(u_p)_{x_k}
\right)
\right|
\leq2 C_R
\qquad\text{a.e. in }B_R,
\end{equation}
and
\begin{equation}\label{eq:zp-W1infty-43}
\sup_{1<p<p_R}
\|{\bf z}_p\|_{W^{1,\infty}(B_R;\mathbb R^n)}<\infty.
\end{equation}
\end{enumerate}
\end{lemma}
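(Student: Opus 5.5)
Fix $R>0$. For (i) I start from the uniform bounds \eqref{34}: $v_p\le\xi_1$, $|v_p'|\le\xi_2$, and $v_p\ge\xi_1^{-1}\ftes_p$. By \ref{2}, $\mathtt K_p(r)\le F_p(r,\xi_1,\xi_2)v_p(r)^{-\beta_p}\le F_p(r,\xi_1,\xi_2)\xi_1^{\beta_p}\ftes_p(r)^{-\beta_p}$. Hypothesis \ref{6} then gives $\mathtt K_p\le C\xi_1^{\beta_p}$, but the constant $C$ in \ref{6} may depend on $p$, so I will not rely on it. Instead I use local uniform convergence \eqref{33}: on $[0,R]\times[0,\xi_1]\times[0,\xi_2]$ we have $F_p\le \sup F_1+1$ for $p$ close to $1$. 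For the singular factor, $\ftes_p(r)^{-\beta_p}=\max\{\ell,r\}^{\frac{n-p}{p-1}\beta_p}\le\max\{\ell,R\}^{(n-1)\cdot 2}$ for $r\le R$, because $\beta_p/(p-1)\to\lbp\le1$ by \eqref{32} (and in any case $\beta_p<p-1$, so $\frac{(n-p)\beta_p}{p-1}\le n$). Together with $\xi_1^{\beta_p}\le\max\{1,\xi_1\}$, this yields $C_R$ and the threshold $p_R$.

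For (ii), the formula $\mathtt G_p(r)=r^{1-n}\int_0^r s^{n-1}\mathtt K_p$ gives $\mathtt G_p(r)\le C_R r/n$ directly. The bound $\mathtt G_p=|v_p'|^{p-1}\le\xi_2^{p-1}$ follows from \eqref{eq:radial-identities-43}. For the derivative I use the ODE \eqref{eq:Ap-ode-43}: $|\mathtt G_p'|\le\mathtt K_p+\frac{n-1}{r}\mathtt G_p\le C_R+\frac{n-1}{n}C_R=\frac{2n-1}{n}C_R$.

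For (iii), I write ${\bf z}_p(x)=-\mathtt G_p(|x|)x/|x|$ and differentiate for $x\neq0$:
\[
\partial_{x_i}\Bigl(-\mathtt G_p(|x|)\tfrac{x_k}{|x|}\Bigr)=-\mathtt G_p'(|x|)\tfrac{x_ix_k}{|x|^2}-\frac{\mathtt G_p(|x|)}{|x|}\Bigl(\delta_{ik}-\tfrac{x_ix_k}{|x|^2}\Bigr).
\]
The first term is at most $\frac{2n-1}{n}C_R$ and the second at most $C_R/n$, so the total is at most $2C_R$. The main point needing care is that this pointwise a.e. derivative is indeed the distributional one across the origin. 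To handle it, note that ${\bf z}_p$ is continuous with ${\bf z}_p(0)=0$, since $\mathtt G_p(r)\le C_Rr/n$, and locally Lipschitz on $B_R\setminus\{0\}$ with the uniform bound above. It is therefore Lipschitz on segments avoiding $0$, and by continuity Lipschitz on all of $B_R$ with constant $2C_R\sqrt{n}\cdot n$ or similar. Rademacher's theorem then identifies the weak derivative, which gives \eqref{42}. Finally, $\|{\bf z}_p\|_{L^\infty}\le\xi_2^{p-1}\le\max\{1,\xi_2\}$, so \eqref{eq:zp-W1infty-43} follows.
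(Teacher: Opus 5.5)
Your proof is correct and follows the same route as the paper's: you bound $F_p$ uniformly on $[0,R]\times[0,\xi_1]\times[0,\xi_2]$ via \eqref{33}, bound $v_p^{-\beta_p}$ using the lower barrier, obtain (ii) from the integral formula for $\mathtt G_p$ and its ODE, and obtain (iii) by differentiating ${\bf z}_p=-\mathtt G_p(|x|)x/|x|$ away from the origin. Two points go slightly beyond the paper: you justify that ${\bf z}_p$ extends Lipschitz across $x=0$ (it is continuous there since $\mathtt G_p(r)\le C_Rr/n$, and for $n\ge2$ segments can avoid the origin), which the paper only asserts, and you note that $\beta_p<p-1$ alone already controls the singular factor.
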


\begin{proof}
\noindent \ref{47} By \eqref{33}, there are $p_R>1$ and $A_R<\infty$ such that
\[
0\leq F_p(r,t,\rho)\leq A_R
\]
for $(r,t,\rho)\in[0,R]\times[0,\xi_1]\times[0,\xi_2]$ and $1<p<p_R$. Moreover, \eqref{34} gives
\[
u_p(r)^{-\beta_p}
\leq \xi_1^{\beta_p} \max\{\ell,R\} ^{(n-p)\beta_p/(p-1)}.
\]

Since \eqref{32} implies that $\beta_p/(p-1)$ is bounded for $p$ close to $1$, the right-hand side is bounded independently of $p$. Hence, after decreasing $p_R$ if necessary,
\[
C_R:=\sup_{\substack{0\leq r\leq R\\1<p<p_R}}\mathtt{K}_p(r)<\infty,
\]
which proves \eqref{eq:Psi-uniform-43}.

\medskip 

\noindent \ref{48} We have
\[
 \mathtt{G}_p (r)
\leq r^{1-n}C_R\int_0^r s^{n-1}\, \din s
=\frac{C_R}{n}r.
\]
The second bound in \eqref{eq:Ap-basic-bounds-43} follows from
\eqref{eq:radial-identities-43} and $|v_p'|\leq\xi_2$.  

From \eqref{eq:Ap-ode-43},
\[
|\mathtt{G}_p '(r)|
\leq{C_R}+\frac{n-1}{r}\mathtt{G}_p (r)
\leq{\frac{2n-1}{n}C_R},
\]
which proves \eqref{eq:Ap-derivative-bound-43}.

\medskip

\noindent \ref{49} For $r=|x|>0$,
\[
|\nabla u_p|^{p-2}(u_p)_{x_k}
=-\mathtt{G}_p (r)\frac{x_k}{r}.
\]
Differentiation gives
\begin{equation*}\label{eq:flux-derivative-43}
\begin{aligned}
\partial_{x_i}
\left(
|\nabla u_p|^{p-2}(u_p)_{x_k}
\right)
&=-\mathtt{G}_p '(|x|)\frac{x_ix_k}{|x|^2}
\\
&\quad
-\mathtt{G}_p (|x|)
\left(
\frac{\delta_{ik}}{|x|}
-\frac{x_ix_k}{|x|^3}
\right).
\end{aligned}
\end{equation*}

Therefore,
\[
\left|
\partial_{x_i}\left(|\nabla u_p|^{p-2}(u_p)_{x_k}\right)
\right|
\leq |\mathtt G_p'(|x|)|+\frac{\mathtt G_p(|x|)}{|x|}
\leq\frac{2n-1}{n}C_R+\frac1nC_R
=2C_R,
\]
which proves \eqref{42}.

Finally,
\[
|{\bf z}_p(x)|=\mathtt{G}_p (|x|)\leq\frac{C_R}{n}|x|,
\]
hence ${\bf z}_p(0):=0$ gives a Lipschitz extension to $B_R$ and
\eqref{eq:zp-W1infty-43} follows.
\end{proof}

\begin{proposition}\label{44}
There exist a subsequence $\{u_{p_j}\}$, a radial nonincreasing function
\[
u\in W^{1,\infty}(\mathbb R^n),
\qquad
u(x)=v(|x|),
\qquad
0\leq u\leq\xi_1,
\qquad
\|\nabla u\|_{L^\infty(\mathbb R^n)}\leq\xi_2,
\]
a radial measurable function
\[
w:\mathbb R^n\to(0,1],
\]
a function
\[
\mathtt{G}\in W^{1,\infty}_{\mathrm{loc}}([0,\infty)),
\qquad
0\leq\mathtt{G}\leq1,
\qquad
\mathtt{G}(0)=0,
\]
a vector field
\[
{\bf z}\in W^{1,\infty}_{\mathrm{loc}}(\mathbb R^n;\mathbb R^n),
\qquad
|{\bf z}|\leq1,
\]
and a function
\[
\mathtt{K}\in L^\infty_{\mathrm{loc}}([0,\infty)),
\qquad {\mathtt K\geq0\quad\text{a.e.}},
\]
such that
\begin{enumerate}[label=$(\roman*)$]

\item \label{50}
\begin{equation}\label{35}
u_{p_j}\to    u
\quad\text{locally uniformly in }\mathbb R^n.
\end{equation}

Moreover,
\begin{equation}\label{eq:grad-weak-star-43}
\nabla u_{p_j}\stackrel{*}{\rightharpoonup}\nabla u
\quad\text{in }L^\infty_{\mathrm{loc}}(\mathbb R^n;\mathbb R^n) .
\end{equation}

\item \label{51} $|v_{p_j}'|\stackrel{*}{\rightharpoonup}|v'|
$ in $L^\infty_{\mathrm{loc}}
\bigl((0,\infty);r^{n-1}\din r\bigr)$ and $ |\nabla u_{p_j}|\stackrel{*}{\rightharpoonup}|\nabla u|$ in $L^\infty_{\mathrm{loc}}(\mathbb R^n)$.

\item \label{52} Setting $w_j:=u_{p_j}^{p_j-1}$ and $c_j:=\frac{\beta_{p_j}}{p_j-1}$, we have
\begin{gather*}
w_j\to    w
\quad\text{a.e. in } \mathbb R^n,\\
w_j\to    w
\quad\text{ in } L^q_{\mathrm{loc}}(\mathbb R^n),
\qquad
1\leq q<\infty,\\
\bigl(\max\{\ell,|x|\}\bigr)^{-(n-1)}
\leq w(x)\leq1
\qquad\text{for a.e. }x\in\mathbb R^n,\\
w=1\qquad\text{a.e. on }\{x\in\R^n\mid u(x)>0\}
\end{gather*}
and
\begin{equation}\label{36}
w_j^{-c_j}
\to    w^{-{\lbp}}
\quad\text{in }L^q_{\mathrm{loc}}(\mathbb R^n),
\qquad
1\leq q<\infty.
\end{equation}

\item \label{53}
\begin{equation}\label{37}
{\bf z}_{p_j}\to   {\bf z}
\quad\text{locally uniformly in }\mathbb R^n,
\qquad
\nabla{\bf z}_{p_j}\stackrel{*}{\rightharpoonup}\nabla{\bf z}
\quad\text{in }L^\infty_{\mathrm{loc}}(\mathbb R^n;\mathbb R^{n\times n}),
\end{equation}
\begin{equation}\label{eq:Ap-limit-43}
\mathtt{G}_{p_j}\to   \mathtt{G}
\quad\text{locally uniformly on }[0,\infty),
\qquad
{\bf z}(x)=-\mathtt{G}(|x|)\frac{x}{|x|}
\quad(x\neq0),
\end{equation}

\item \label{56}
 \begin{equation}\label{94}
\mathtt{K}_{p_j}(|x|)\stackrel{*}{\rightharpoonup}
\mathtt{K}(|x|)
\quad\text{in }L^\infty_{\mathrm{loc}}(\mathbb R^n).
\end{equation}

Moreover, 
\begin{equation}\label{92}
-\operatorname{div}{\bf z}
=\mathtt{K}(|x|),
\qquad
\mathtt{K}(r)
=\mathtt{G}'(r)+\frac{n-1}{r}\mathtt{G}(r)
\quad\text{for a.e. }r>0.
\end{equation}

\item \label{57}
\begin{gather}
\label{59}
{\bf z}\cdot\nabla u=|\nabla u|
\quad\text{a.e. in }\mathbb R^n,
\qquad
(1-\mathtt{G}(r))|v'|(r)=0
\quad\text{for a.e. }r>0,\\
\label{eq:calibration-43}
({\bf z},Du)=|Du|.
\end{gather}

Consequently,
\begin{equation}\label{60}
|v'|
=|v'|\,{\bf 1}_{\{\mathtt{G}=1\}}
\quad\text{a.e. on }(0,\infty),
\qquad
v(r)=v(0)-\int_0^r
|v'|(s){\bf 1}_{\{\mathtt{G}(s)=1\}}\,\din s.
\end{equation}
\end{enumerate}
\end{proposition}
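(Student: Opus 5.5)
Most of the statement follows from compactness of the uniform bounds in \eqref{34} and Lemma~\ref{30}, using one diagonal extraction over $R\in\mathbb N$.

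\textbf{Compactness.} Since $u_p$ is bounded by $\xi_1$ and $|\nabla u_p|\le\xi_2$, Arzel\`a--Ascoli and Banach--Alaoglu give a subsequence with $u_{p_j}\to u$ locally uniformly and $\nabla u_{p_j}\stackrel{*}{\rightharpoonup}\nabla u$. The limit $u$ is radial, nonincreasing in $r$, lies in $[0,\xi_1]$, and satisfies $\|\nabla u\|_\infty\le\xi_2$ by weak$^*$ lower semicontinuity of the norm. Since $v_p'\le0$, we have $|v_{p_j}'|=-v_{p_j}'\stackrel{*}{\rightharpoonup}-v'=|v'|$, which proves \ref{51}.

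For \ref{53}, Lemma~\ref{30}\ref{48} makes $\mathtt G_p$ uniformly Lipschitz on $[0,R]$ with $0\le\mathtt G_p\le\xi_2^{p-1}\to1$. Arzel\`a--Ascoli gives $\mathtt G_{p_j}\to\mathtt G$ locally uniformly, with $0\le\mathtt G\le1$ and $\mathtt G(0)=0$. By \eqref{eq:zp-W1infty-43}, ${\bf z}_{p_j}\to{\bf z}=-\mathtt G(|x|)x/|x|$ locally uniformly and $\nabla{\bf z}_{p_j}\stackrel{*}{\rightharpoonup}\nabla{\bf z}$. Because $\mathtt K_p$ is bounded on $[0,R]$, we may extract $\mathtt K_{p_j}(|x|)\stackrel{*}{\rightharpoonup}\mathtt K(|x|)$ with $\mathtt K\ge0$. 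Passing to the limit in the distributional form of \eqref{eq:Ap-ode-43}, i.e. $-\operatorname{div}{\bf z}_p=\mathtt K_p(|x|)$, gives \eqref{92}.

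\textbf{The weights $w_j$.} From \eqref{34}, $\xi_1^{-(p-1)}\max\{\ell,r\}^{-(n-p)}\le w_j\le\xi_1^{p-1}$. The key observation is that $v_{p_j}^{p_j-1}$ is monotone in $r$, so Helly's selection theorem gives a.e. convergence of a subsequence to some $w$. The bounds pass to the limit as $\max\{\ell,r\}^{-(n-1)}\le w\le1$. Dominated convergence then upgrades this to $L^q_{\loc}$ convergence. Where $u>0$, $u_{p_j}$ is locally bounded away from $0$, so $u_{p_j}^{p_j-1}\to1$. Since $c_j\to\lbp$ and $w_j$ is bounded away from $0$ locally, $w_j^{-c_j}\to w^{-\lbp}$ a.e. with a uniform bound, which gives \eqref{36}.

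\textbf{Calibration (main obstacle).} This is the step I expect to be hardest. The pointwise relation ${\bf z}_p\cdot\nabla u_p=|\nabla u_p|^p$ does not pass to weak limits directly, because $|\nabla u_{p_j}|^{p_j}$ is nonlinear in a weakly converging sequence. I would instead argue with one-sided bounds.
\begin{itemize}
\item Lower bound: ${\bf z}_{p_j}\cdot\nabla u_{p_j}\to{\bf z}\cdot\nabla u$ weakly$^*$, since ${\bf z}_{p_j}$ converges strongly and $\nabla u_{p_j}$ weakly$^*$. Young's inequality gives $|\nabla u_p|^p\ge|\nabla u_p|-\tfrac{p-1}{p}$, so the weak$^*$ limit of $|\nabla u_{p_j}|^{p_j}$ is at least $|\nabla u|$. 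Hence ${\bf z}\cdot\nabla u\ge|\nabla u|$.
\item Upper bound: $|{\bf z}|\le1$ gives ${\bf z}\cdot\nabla u\le|\nabla u|$.
\end{itemize}
Together these give ${\bf z}\cdot\nabla u=|\nabla u|$. In radial form this reads $\mathtt G|v'|=|v'|$, which is $(1-\mathtt G)|v'|=0$. Since $u\in W^{1,\infty}$, we have $Du=\nabla u\,\mathcal L^n$, and the Anzellotti pairing equals ${\bf z}\cdot\nabla u\,\mathcal L^n$ (Lipschitz $u$, integration by parts). This yields \eqref{eq:calibration-43}. Finally, \eqref{60} follows from the complementarity relation and absolute continuity of $v$.
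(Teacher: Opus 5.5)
Your proposal is correct. Except for the calibration step, it follows the paper's proof: a diagonal Arzel\`a--Ascoli/Banach--Alaoglu extraction, $|v_{p_j}'|=-v_{p_j}'$, Helly's theorem for the monotone profiles $v_{p_j}^{p_j-1}$, passage to the limit in $-\operatorname{div}{\bf z}_p=\mathtt K_p(|x|)$, and the pairing computed by integration by parts.

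\textbf{How the paper does the calibration.} It writes $\mathtt G_{p_j}|v_{p_j}'|=|v_{p_j}'|^{p_j}$ and uses $\delta_j:=\sup_{0\le t\le\xi_2}|t^{p_j}-t|\to0$. Hence $\mathtt G_{p_j}|v_{p_j}'|-|v_{p_j}'|\to0$ uniformly. Uniform convergence of $\mathtt G_{p_j}$ and weak-$*$ convergence of $|v_{p_j}'|$ then give $\mathtt G|v'|=|v'|$ directly, without using $|{\bf z}|\le1$.

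\textbf{Your remark about weak limits.} Your claim that the relation ``does not pass to weak limits directly'' is not accurate here. Because $|\nabla u_p|\le\xi_2$, the map $t\mapsto t^{p_j}$ converges uniformly to the identity on the relevant range. So $|\nabla u_{p_j}|^{p_j}$ and $|\nabla u_{p_j}|$ have the same weak-$*$ limit.

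\textbf{Your route is still valid.} The inequality $t^p\ge t-\frac{p-1}{p}$ holds for all $t\ge0$. Next, ${\bf z}_{p_j}\cdot\nabla u_{p_j}\stackrel{*}{\rightharpoonup}{\bf z}\cdot\nabla u$ because ${\bf z}_{p_j}\to{\bf z}$ locally uniformly. Finally, $|{\bf z}|\le1$ gives the reverse inequality.

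\textbf{Comparison.} The paper's route is shorter and yields the identity in one step. Yours uses an inequality that is global in $t$, so that step does not require confining $t$ to $[0,\xi_2]$. It is also closer to the lower-semicontinuity arguments customary in the $p\to1$ literature. The cost is that you need $|{\bf z}|\le1$ as a separate input.
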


\begin{proof}
%Throughout the proof, subsequences obtained by successive extractions are not relabeled.

\noindent\ref{50}
From \eqref{34},
\[
|u_p(x)-u_p(y)|\leq\xi_2|x-y|,
\qquad0<u_p\leq\xi_1.
\]
Arzel\`a--Ascoli and a diagonal argument give \eqref{35}. Passing to the
limit in the Lipschitz inequality yields
\[
|u(x)-u(y)|\leq\xi_2|x-y|,
\qquad u(x)=v(|x|),
\]
so
\[
u\in W^{1,\infty}(\mathbb R^n),
\qquad D^su=0.
\]

By the Banach--Alaoglu theorem \cite{zbMATH05633610}, after passing to a subsequence, one has
$$
\nabla u_{p_j}\stackrel{*}{\rightharpoonup}\overline{{\bf u}}
\quad \text{in }L^\infty_{\mathrm{loc}}(\mathbb R^n;\mathbb R^n).
$$
Then, for every $\Phi\in C_c^\infty(\mathbb R^n;\mathbb R^n)$,
\[
\int_{\mathbb R^n}\nabla u_{p_j}\cdot\Phi\,\din x
=-\int_{\mathbb R^n}u_{p_j}\operatorname{div}\Phi\,\din x
\to    \int_{\mathbb R^n} \overline{{\bf u}}\cdot\Phi\,\din x =
-\int_{\mathbb R^n}u\operatorname{div}\Phi\,\din x.
\]
Hence $\nabla u=\overline{{\bf u}}$, proving \eqref{eq:grad-weak-star-43}.

%\fragmento{frag1}

\medskip

\noindent\ref{51} Since $v_{p_j}'\leq0$ and $v'\leq0$, the first convergence follows from
\[
\begin{aligned}
\int_0^\infty
|v_{p_j}'(r)|\zeta (r)r^{n-1}\,\din r
&=
-\int_0^\infty
v_{p_j}'(r) \zeta (r)r^{n-1}\,\din r
\\
&=
\int_0^\infty
v_{p_j}(r)
\frac{\din}{\din r}
\bigl(\zeta(r)r^{n-1}\bigr)\,\din r
\end{aligned}
\]
for $\zeta\in C_c^\infty((0,\infty))$. The local uniform convergence $v_{p_j}\to v$ gives
$$
\int_0^\infty
|v_{p_j}'(r)|\zeta (r)r^{n-1}\,\din r \to  \int_0^\infty
|v'(r)|\zeta (r)r^{n-1}\,\din r.
$$
For every $R>0$, the density of $C_c^\infty((0,R))$ in $L^1((0,R);r^{n-1}\din r)$ and the bound $|v_{p_j}'|\leq\xi_2$ extend this convergence to all tests in that weighted $L^1$ space, including tests whose support meets $r=0$.

For the second convergence, polar coordinates give
\[
\begin{aligned}
\int_{B_R}
|\nabla u_{p_j}(x)|\phi(x)\,\din x
&=
\int_0^R
|v_{p_j}'(r)|\Phi(r)r^{n-1}\,\din r
\end{aligned}
\]
where $\phi\in L^1(B_R)$ and $\Phi(r)=\int_{\mathbb S^{n-1}}\phi(r\theta)\,\din\sigma(\theta)$.
Since $\Phi\in L^1((0,R);r^{n-1}\din r)$, the first convergence applies and proves \ref{51}.

\medskip

\noindent\ref{52} From \eqref{34},
\begin{equation}\label{55}
\xi_1^{-(p_j-1)}
\bigl(\max\{\ell,r\}\bigr)^{-(n-p_j)}
\leq
\widetilde w_j (r)
\leq \xi_1^{p_j-1} \qquad \widetilde w_j = v_{p_j} ^{p_j-1} .
\end{equation}

The functions $\widetilde w_j$ are nonincreasing and locally uniformly bounded. By Helly's selection theorem \cite[Chap.~8, Sect.~4, Lemma~2]{natanson1961theory}, there is a function $\widetilde w:[0,\infty)\to(0,\infty)$
such that, up to a subsequence,
\[
\widetilde w_j(r) \to    \widetilde w (r)
\]
for every $r\geq 0$. 

Set
$$
w(x):=\widetilde w (|x|).
$$

Passing to the limit in \eqref{55} gives
\[
\bigl(\max\{\ell,r\}\bigr)^{-(n-1)}
\leq\widetilde w(r)\leq1.
\]
If $u(x)>0$, local uniform convergence implies that $u_{p_j}(x)$ stays bounded away from $0$; hence
\[
\log w_j(x)=(p_j-1)\log u_{p_j}(x)\to   0,
\qquad w(x)=1.
\]

On every ball, \eqref{55} bounds $w_j$ away from $0$, while $c_j\to{\lbp}$. Consequently, $w_j^{-c_j}\to w^{-{\lbp}}$ pointwise, and both sequences are uniformly bounded there.

For $1\leq q<\infty$, dominated convergence gives
\begin{gather*}
\|w_j-w\|_{L^q(B_R)}\to   0,\\
\|w_j^{-c_j}-w^{-{\lbp}}\|_{L^q(B_R)}
\to   0.
\end{gather*}

This proves \ref{52}.

\medskip

\noindent\ref{53} By Lemma \ref{30},
\[
\mathtt{G}_p(r)
\leq\min\left\{\frac{C_R}{n}r,\xi_2^{p-1}\right\},
\qquad
0\leq r\leq R,
\]
and
\[
\|\mathtt{G}_p'\|_{L^\infty(0,R)}
\leq\frac{2n-1}{n}C_R.
\]
Hence $\{\mathtt{G}_{p_j}\}$ is uniformly bounded and equi-Lipschitz on
$[0,R]$. Arzel\`a--Ascoli and a diagonal extraction yield
\[
\mathtt{G}_{p_j}\to   \mathtt{G}
\quad\text{uniformly on }[0,R]
\quad\text{for every }R>0,
\]
where
\[
\mathtt{G}\in W^{1,\infty}_{\mathrm{loc}}([0,\infty))
, \qquad \mathtt{G}(0)=0, \qquad
0\leq\mathtt{G}\leq1.
\]

For $x\neq0$, \eqref{eq:radial-identities-43} gives
\[
{\bf z}_{p_j}(x)
=
-\mathtt{G}_{p_j}(|x|)\frac{x}{|x|}.
\]

Set
\[
{\bf z}(x):=
-\mathtt{G}(|x|)\frac{x}{|x|},
\qquad {\bf z}(0):=0.
\]

Then
\[
\sup_{x\in B_R}
|{\bf z}_{p_j}(x)-{\bf z}(x)|
\leq
\sup_{0\leq r\leq R}
|\mathtt{G}_{p_j}(r)-\mathtt{G}(r)|
\to   0.
\]
Thus
\[
{\bf z}_{p_j}\to   {\bf z}
\quad\text{uniformly on }B_R.
\]
Moreover, Lemma \ref{30} gives
\[
\sup_j
\|{\bf z}_{p_j}\|_{W^{1,\infty}(B_R;\mathbb R^n)}
<\infty.
\]
By the Banach--Alaoglu theorem \cite{zbMATH05633610},
\[
\nabla{\bf z}_{p_j}
\stackrel{*}{\rightharpoonup}Z_R
\quad\text{in }
L^\infty(B_R;\mathbb R^{n\times n})
\]
along a subsequence. For every
$\Phi\in C_c^\infty(B_R;\mathbb R^{n\times n})$,
the uniform convergence ${\bf z}_{p_j}\to{\bf z}$ implies
\[
\int_{B_R}Z_R:\Phi\,\din x
=
-\int_{B_R}{\bf z}\cdot\operatorname{div}\Phi\,\din x.
\]
Therefore
\[
Z_R=\nabla{\bf z},
\]
so that
\[
{\bf z}\in W^{1,\infty}(B_R;\mathbb R^n)
\]
and \eqref{37} follows. Also,
\[
|{\bf z}(x)|=\mathtt{G}(|x|)\leq1.
\]

\medskip

\noindent \ref{56} By \eqref{eq:Psi-uniform-43},
\[
\|\mathtt{K}_{p_j}\|_{L^\infty(0,R)}
\leq C_R.
\]
Banach--Alaoglu and a diagonal extraction give
\[
\mathtt{K}_{p_j}
\stackrel{*}{\rightharpoonup}
\mathtt{K}
\quad\text{in }L^\infty(0,R)
\quad\text{for every }R>0,
\]
for some
\[
\mathtt{K}\in L^\infty_{\mathrm{loc}}([0,\infty)),
\qquad {\mathtt K\geq0\quad\text{a.e.}}.
\]

\eqref{94} follows from polar coordinates:
\[
\int_{B_R}
\mathtt{K}_{p_j}(|x|)\phi(x)\,\din x
=
\int_0^R
\mathtt{K}_{p_j}(r)\Phi(r)r^{n-1}\,\din r
\]
where $\phi\in L^1(B_R)$ and $\Phi(r):=\int_{\mathbb S^{n-1}}\phi(r\theta)\,\din\sigma(\theta)$.

\medskip

We now prove \eqref{92}.

\medskip

For $x\neq0$, using \eqref{eq:Ap-ode-43},
\[
-\operatorname{div}{\bf z}_{p_j}(x)
=
\mathtt{G}_{p_j}'(|x|)
+\frac{n-1}{|x|}\mathtt{G}_{p_j}(|x|)
=
\mathtt{K}_{p_j}(|x|)
\]
for a.e. $x\neq0$. 

Since ${\bf z}_{p_j}\in W^{1,\infty}_{\mathrm{loc}}(\mathbb R^n;\mathbb R^n)$, for every
$\varphi\in C_c^\infty(\mathbb R^n)$,
\[
\int_{\mathbb R^n}
{\bf z}_{p_j}\cdot\nabla\varphi\,\din x
=
\int_{\mathbb R^n}
\mathtt{K}_{p_j}(|x|)\varphi\,\din x.
\]

Passing to the limit by \eqref{37} and \eqref{94},
\[
\int_{\mathbb R^n}
{\bf z}\cdot\nabla\varphi\,\din x
=
\int_{\mathbb R^n}
\mathtt{K}(|x|)\varphi\,\din x.
\]

Hence
\[
-\operatorname{div}{\bf z}
=
\mathtt{K}(|x|).
\]

On the other hand, for a.e. $x\neq0$,
\[
{\bf z}(x)
=
-\mathtt{G}(|x|)\frac{x}{|x|}
\]
and therefore
\[
-\operatorname{div}{\bf z}(x)
=
\mathtt{G}'(|x|)
+\frac{n-1}{|x|}\mathtt{G}(|x|).
\]
Consequently,
\[
\mathtt{K}(r)
=
\mathtt{G}'(r)+\frac{n-1}{r}\mathtt{G}(r)
\quad\text{for a.e. }r>0,
\]
which proves \eqref{92}.

\medskip

\noindent\ref{57} We prove \eqref{59}. By \eqref{34} and \eqref{eq:radial-identities-43},
\[
 |v_{p_j}'|\leq\xi_2,
\qquad
\mathtt{G}_{p_j}(r)=|v_{p_j}'(r)|^{p_j-1}.
\]

Hence
\[
\mathtt{G}_{p_j}|v_{p_j}'|=|v_{p_j}'| ^{p_j}.
\]

Set
\[
\delta_j:=
\sup_{0\leq t\leq\xi_2}|t^{p_j}-t|
\to   0.
\]
Therefore
\begin{equation*}
\bigl|
\mathtt{G}_{p_j}(r)|v_{p_j}'(r)|-|v_{p_j}'(r)|
\bigr|
\leq\delta_j
\quad\text{for a.e. }r>0.
\end{equation*}

Fix $R>0$ and let
\[
\psi\in L^1((0,R);r^{n-1}\din r).
\]
Then
\[
\begin{aligned}
&\left|
\int_0^R
\psi
\bigl(
\mathtt{G}_{p_j}|v_{p_j}'| -|v_{p_j}'|
\bigr)
r^{n-1}\,\din r
\right|
\\
&\qquad\leq
\delta_j
\int_0^R|\psi|r^{n-1}\,\din r
\to   0.
\end{aligned}
\]

Moreover,
\[
\begin{aligned}
&\int_0^R
\psi\,\mathtt{G}_{p_j}|v_{p_j}'|\,r^{n-1}\,\din r
-
\int_0^R
\psi\,\mathtt{G}|v'|\,r^{n-1}\,\din r
\\
&=
\int_0^R
\psi(\mathtt{G}_{p_j}-\mathtt{G})|v_{p_j}'|  \,r^{n-1}\,\din r
+
\int_0^R
\psi\mathtt{G}(|v_{p_j}'|-|v'|)\,r^{n-1}\,\din r.
\end{aligned}
\]
The first term tends to $0$ because $\mathtt{G}_{p_j}\to   \mathtt{G}
\quad\text{uniformly on }[0,R]$, $ |v_{p_j}'|\leq\xi_2$, and the second tends to $0$ by \ref{51}. 

Hence
\[
\int_0^R
\psi(r)\mathtt{G}(r)|v'(r)|r^{n-1}\,\din r
=
\int_0^R
\psi(r)|v'(r)|r^{n-1}\,\din r.
\]
Since $\psi$ is arbitrary,
\[
(1-\mathtt{G}(r))|v'(r)|=0
\quad\text{for a.e. }r\in(0,R).
\]

Furthermore, for a.e. $x\neq0$,
\[
\nabla u(x)
=
v'(|x|)\frac{x}{|x|}
=
-|v'(|x|)|\frac{x}{|x|}, \qquad
{\bf z}(x)
=
-\mathtt{G}(|x|)\frac{x}{|x|}.
\]
Therefore
\[
{\bf z}(x)\cdot\nabla u(x)
=
\mathtt{G}(|x|)|v'(|x|)|
=
|v'(|x|)|
=
|\nabla u(x)|
\]
for a.e. $x\in\mathbb R^n$. This proves
\eqref{59}.

\medskip

We now prove
\begin{equation}\label{58}
({\bf z},Du)=|Du|.
\end{equation}
Since $u\in W^{1,\infty}_{\mathrm{loc}}(\mathbb R^n)$ and ${\bf z}\in W^{1,\infty}_{\mathrm{loc}}(\mathbb R^n;\mathbb R^n)$, for every $\varphi\in C_c^\infty(\mathbb R^n)$,
\begin{equation}\label{93}
\begin{aligned}
\langle({\bf z},Du),\varphi\rangle
&=-\int_{\mathbb R^n}u\varphi\,\operatorname{div}{\bf z}\,\din x
-\int_{\mathbb R^n}u{\bf z}\cdot\nabla\varphi\,\din x
\\
&=\int_{\mathbb R^n}\varphi\,{\bf z}\cdot\nabla u\,\din x
=\int_{\mathbb R^n}\varphi\,|\nabla u|\,\din x.
\end{aligned}
\end{equation}

By \eqref{122} gives
\[
|Du|=\mathcal L^n \llcorner |\nabla u|.
\]
Thus $({\bf z},Du)=|Du|$, which proves \eqref{eq:calibration-43}.

\medskip

Finally, because
\[
\mathtt{G}\leq1,
\qquad
(1-\mathtt{G})|v'|=0,
\]
we have
\[
|v'|
=
|v'|\,{\bf 1}_{\{\mathtt{G}=1\}}
\quad\text{a.e. on }(0,\infty).
\]
Since $v\in W^{1,\infty}_{\mathrm{loc}}([0,\infty))$ and $v'\leq0$,
\[
\begin{aligned}
v(r)
&=
v(0)+\int_0^r v'(s)\,\din s
\\
&=
v(0)-\int_0^r
|v'|(s){\bf 1}_{\{\mathtt{G}(s)=1\}}\,\din s.
\end{aligned}
\]
This proves \eqref{60}.
\end{proof}

\subsection{Behavior of the limit and source identification}
We first show that $v$ is constant on every interval contained in $\{r>0 \mid \mathtt G(r)<1\}$. Convexity or concavity of $\rho\mapsto F_1(r,t,\rho)$ gives bounds for $\mathtt K$, while affine dependence on $\rho$ allows us to identify $\mathtt K$ in terms of $F_1$, $u$, and $|\nabla u|$. On measurable subsets of $\{r>0 \mid \mathtt G(r)=1\}$, the assumption $(\mathtt G_{p_j}(r)-1)/(p_j-1)\to a(r)\in\mathbb R$ a.e. gives $|v'(r)|=e^{a(r)}$ a.e. and identifies $\mathtt K$. Finally, under the corresponding assumptions, comparison estimates and the uniform bound for $|v_{p_j}'|$ give bounds for the limit inferior and limit superior of $(\mathtt G_{p_j}(r)-1)/(p_j-1)$.

In the following result, the relation $|\nabla u_{p_j}|=\mathtt G_{p_j}^{1/(p_j-1)}$ allows us to split the analysis of the limit $\mathtt K$ into the regions ${\mathtt G<1}$ and ${\mathtt G=1}$.
\begin{lemma}\label{82}
Set
\begin{equation*}\label{eq:subcritical-critical-sets-43}
\widehat{\mathscr{C}}_<:=\{x\in\mathbb R^n \mid \mathtt{G}(|x|)<1\},
\qquad
\widehat{\mathscr{C}}_1:=\{x\in\mathbb R^n\setminus\{0\}\mid \mathtt{G} (|x|)=1\}.
\end{equation*}

\begin{enumerate}[label=$(\roman*)$]
\item \label{62} For every $1\leq q<\infty$,
\begin{equation}\label{eq:gradient-zero-subcritical-43}
\begin{gathered}
|\nabla u_{p_j}|\to   0
\quad\text{in }L^q_{\mathrm{loc}}(\widehat{\mathscr{C}}_<),\\
{|\nabla u_{p_j}(x)|\to   0\quad\text{for every }x\in\widehat{\mathscr{C}}_<},
\qquad \nabla u=0\quad\text{a.e. in }\widehat{\mathscr{C}}_<,
\end{gathered}
\end{equation}
\begin{equation}\label{eq:Psi-subcritical-43}
\mathtt{K}(|x|)
=w^{-{\lbp}}F_1(|x|,u(x),0)
\qquad\text{a.e. }x\in \widehat{\mathscr{C}}_<,
\end{equation}
and, if $|\widehat{\mathscr{C}}_1|\neq 0$,
\begin{equation}\label{eq:Psi-critical-43}
\mathtt{K}(|x|)=\frac{n-1}{|x|}
\qquad\text{a.e. }x\in\widehat{\mathscr{C}}_1.
\end{equation}

\item \label{63} There exist $r_0>0$ and $\delta\in(0,1)$ such that, for all sufficiently large $j$,
\begin{equation}\label{eq:origin-degeneration-43}
\sup_{|x|\leq r_0}|\nabla u_{p_j}(x)|
\leq \delta^{1/(p_j-1)}\to   0.
\end{equation}

Moreover, if $|\{|\nabla u|>0\}|\neq 0$,
\begin{equation}\label{eq:positive-gradient-critical-43}
\{|\nabla u|>0\}\subset\widehat{\mathscr{C}}_1\quad\text{up to a null set},
\qquad
{\mathtt{K}(|x|)=\frac{n-1}{|x|}}\quad\text{a.e. on }\{|\nabla u|>0\}.
\end{equation}
\end{enumerate}
\end{lemma}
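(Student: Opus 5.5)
The plan is to exploit the identity $|\nabla u_{p_j}(x)|=\mathtt G_{p_j}(|x|)^{1/(p_j-1)}$ from \eqref{eq:radial-identities-43}, together with the local uniform convergence $\mathtt G_{p_j}\to\mathtt G$ of Proposition \ref{44}\ref{53}. Since $\mathtt G$ is continuous on $[0,\infty)$, the set $\{r\ge0\mid \mathtt G(r)<1\}$ is relatively open, so $\widehat{\mathscr C}_<$ is open (it contains $0$ because $\mathtt G(0)=0$).

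\textbf{Part (i): gradient vanishing.} Fix a compact $K\subset\widehat{\mathscr C}_<$. By compactness and continuity, $\theta:=\max_{x\in K}\mathtt G(|x|)<1$. Uniform convergence on $\{|x|\mid x\in K\}$ gives $\mathtt G_{p_j}(|x|)\le(1+\theta)/2$ on $K$ for $j$ large. Hence
\[
\sup_K|\nabla u_{p_j}|\le\bigl((1+\theta)/2\bigr)^{1/(p_j-1)}\to0 .
\]
This yields pointwise convergence and $L^q_{\loc}$ convergence to $0$ on $\widehat{\mathscr C}_<$. Comparing with the weak-$*$ limit \eqref{eq:grad-weak-star-43} (tested against $L^1$ functions supported in $K$) gives $\nabla u=0$ a.e. in $\widehat{\mathscr C}_<$.

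\textbf{Part (i): source identification on $\widehat{\mathscr C}_<$.} Write
\[
\mathtt K_{p_j}(|x|)=F_{p_j}\bigl(|x|,u_{p_j}(x),|\nabla u_{p_j}(x)|\bigr)\,w_j(x)^{-c_j},
\]
using $u_{p_j}^{-\beta_{p_j}}=w_j^{-c_j}$. On $K$, the arguments of $F_{p_j}$ stay in the compact box $[0,R]\times[0,\xi_1]\times[0,\xi_2]$. By \eqref{33}, the uniform convergence $u_{p_j}\to u$, the uniform convergence $|\nabla u_{p_j}|\to0$ on $K$, and the uniform continuity of $F_1$ on that box, the $F_{p_j}$ factor converges uniformly on $K$ to $F_1(|x|,u(x),0)$. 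Combined with \eqref{36}, the product converges in $L^1(K)$ to $w^{-\lbp}F_1(|x|,u,0)$. Uniqueness of the weak-$*$ limit in \eqref{94} then gives \eqref{eq:Psi-subcritical-43}.

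\textbf{Part (i): the critical set.} Since $\mathtt G\in W^{1,\infty}_{\loc}$, the standard fact that a Sobolev/Lipschitz function has zero derivative a.e. on each of its level sets gives $\mathtt G'=0$ a.e. on $\{r>0\mid\mathtt G(r)=1\}$. Then \eqref{92} yields $\mathtt K(r)=(n-1)/r$ a.e. there. Passing to $x$ via polar coordinates, a null set of radii corresponds to a null set in $\mathbb R^n$, which proves \eqref{eq:Psi-critical-43}.

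\textbf{Part (ii).} Since $\mathtt G(0)=0$ and $\mathtt G$ is continuous, choose $r_0>0$ with $\mathtt G\le1/4$ on $[0,r_0]$. Local uniform convergence gives $\mathtt G_{p_j}\le\delta:=1/2$ on $[0,r_0]$ for large $j$; alternatively, use the bound $\mathtt G_p(r)\le C_Rr/n$ from Lemma \ref{30} directly. Then
\[
|\nabla u_{p_j}|=\mathtt G_{p_j}^{1/(p_j-1)}\le\delta^{1/(p_j-1)}\quad\text{on }B_{r_0},
\]
which is \eqref{eq:origin-degeneration-43}. For \eqref{eq:positive-gradient-critical-43}, the complementarity relation $(1-\mathtt G)|v'|=0$ a.e. in \eqref{59} shows that a.e. point where $|\nabla u|=|v'(|x|)|>0$ has $\mathtt G(|x|)=1$. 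Hence $\{|\nabla u|>0\}\subset\widehat{\mathscr C}_1$ up to a null set, and \eqref{eq:Psi-critical-43} gives $\mathtt K=(n-1)/|x|$ there.

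\textbf{Main obstacle.} The step I expect to be most delicate is the source identification on $\widehat{\mathscr C}_<$. It requires controlling the product of a uniformly convergent factor with the merely $L^q$-convergent factor $w_j^{-c_j}$. It also requires justifying that all arguments of $F_{p_j}$ remain in a fixed compact set, so that \eqref{33} can be applied uniformly. Everything else reduces to the continuity of $\mathtt G$ and the identities already recorded in Proposition \ref{44}.
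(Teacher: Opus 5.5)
Your proposal is correct and follows essentially the paper's proof. It uses the identity $|\nabla u_{p_j}|=\mathtt G_{p_j}(|x|)^{1/(p_j-1)}$ together with $\mathtt G_{p_j}\to\mathtt G$, identifies $\mathtt K$ through the strong limit of $\mathtt K_{p_j}$ on $\widehat{\mathscr C}_<$, uses $\mathtt G'=0$ a.e. on $\{\mathtt G=1\}$ with \eqref{92}, and uses the complementarity relation \eqref{59}. The differences are minor. You use uniform convergence on compact subsets where the paper uses pointwise convergence plus dominated convergence, you deduce $\nabla u=0$ from the weak-$*$ limit rather than from \eqref{59}, and you obtain $r_0$ from continuity of $\mathtt G$ at $0$ rather than from the explicit bound $\mathtt G_p(r)\le C_R r/n$ (which you also mention).
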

\begin{proof}

 \noindent \ref{62} If $x\in\widehat{\mathscr{C}}_<$, then
\[
\mathtt{G}_{p_j}(|x|)\to    \mathtt{G}(|x|)<1,
\]
and therefore
\[
|\nabla u_{p_j}(x)|
=\mathtt{G}_{p_j}(|x|)^{1/(p_j-1)}\to   0.
\]

The bound $|\nabla u_{p_j}|\leq\xi_2$ and dominated convergence give the strong local $L^q$
convergence in \eqref{eq:gradient-zero-subcritical-43}. The second identity in \eqref{59} gives $\nabla u=0$ a.e. in $\widehat{\mathscr{C}}_<$. 

By \eqref{33}, \eqref{35}, \eqref{36}, and dominated
convergence,
\[
\mathtt{K} _{p_j}(|x|)
\to   
w^{-{\lbp}}F_1(|x|,u(x),0)
\quad\text{in }L^1_{\mathrm{loc}}(\widehat{\mathscr{C}}_<),
\]
which proves \eqref{eq:Psi-subcritical-43}.

Since $\mathtt{G}'=0$ for a.e. $r\in\{\mathtt{G}=1\}$, \eqref{92} gives
\eqref{eq:Psi-critical-43}.

\medskip

\noindent \ref{63} Choose $R=1$ in \eqref{eq:Psi-uniform-43} and let $j_0>0$ such that $p_j \in (1,p_R)$ for $j>j_0$. If $C_1=0$,
then $\mathtt{G}_{p_j}\equiv0$ on $[0,1]$; in this case, take $r_0=\delta=1/2$. If
$C_1>0$, choose
\[
0<r_0<\min\left\{1,\frac{n}{C_1}\right\},
\qquad
\delta:=\frac{C_1r_0}{n}<1.
\]

Then \eqref{eq:Ap-basic-bounds-43} gives
\[
|\nabla u_{p_j}(x)|
=\mathtt{G} _{p_j}(|x|)^{1/(p_j-1)}
\leq \delta^{1/(p_j-1)},
\qquad|x|\leq r_0,
\] 
which proves \eqref{eq:origin-degeneration-43}.

Passing to the limit in $\mathtt{G}_{p_j}(r)\le \delta$ for $0\le r\le r_0$ gives
\[
\mathtt{G}(r)\le \delta<1,
\]
hence $B_{r_0}\subset\widehat{\mathscr{C}}_<$. 

Finally,
\eqref{59} implies
$\{|\nabla u|>0\}\subset\widehat{\mathscr{C}}_1$ up to a null set, and
\eqref{eq:Psi-critical-43} gives the second assertion in
\eqref{eq:positive-gradient-critical-43}.\end{proof}

%\begin{remark}\label{rem:mercado-comparison-43} In \cite{MercaldoSeguraTrombetti2008}, the radial flux in the model considered there is independent of $p$.  Hence on its critical set $|{\bf z}|=1$ one has exactly \[ |\nabla u_p|=1,\] whereas $|\nabla u_p|\to0$ on $\{|{\bf z}|<1\}$; this yields an explicit limit profile supported by the unit-flux set.  In the present problem $\mathtt{G}_p $ depends on $p$ through $u_p$ and through the gradient dependence of $F_p$.  Therefore $A(r)=1$ alone does not determine the limit of $|\nabla u_p(r)|$. \end{remark}

%%%
%%%
%%%

The proof uses Mazur's lemma  to form convex combinations of $|\nabla u_{p_j}|$ that converge strongly to $|\nabla u|$ in $L^2(B_R)$. Convexity and dominated convergence then give the bound for the limiting source. Concavity gives the reverse inequality, and affine dependence gives equality, identifying the nonlinear term in the limit equation.
\begin{proposition}\label{64}
Under the notation and conclusions of Proposition \ref{44}, assume that, for every $(r,t)\in[0,\infty)\times[0,\xi_1]$, the map $\rho\mapsto   F_1(r,t,\rho)$ is convex on $[0,\xi_2]$. Then
\begin{equation}\label{eq:convex-bound-43}
F_1(|x|,u,|\nabla u|)
\leq w^{\lbp}{\mathtt{K}(|x|)}
\qquad\text{a.e. in }\mathbb R^n.
\end{equation}

If, instead, the same map is concave on $[0,\xi_2]$, then
\begin{equation}\label{eq:concave-bound-43}
w^{\lbp}{\mathtt{K}(|x|)}
\leq F_1(|x|,u,|\nabla u|)
\qquad\text{a.e. in }\mathbb R^n.
\end{equation}

In particular, if it is affine in $\rho$ on $[0,\xi_2]$, then
\begin{equation}\label{eq:affine-identification-43}
w^{\lbp}{\mathtt{K}(|x|)}
=F_1(|x|,u,|\nabla u|)
\qquad\text{a.e. in }\mathbb R^n,
\end{equation}
and therefore
\begin{equation*}\label{eq:affine-limit-problem-43}
-\Delta_1u
=w^{-{\lbp}}F_1(|x|,u,|\nabla u|)\mathcal .
\end{equation*}
\end{proposition}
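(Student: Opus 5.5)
The idea is to pass to the limit in the identity $w_j^{c_j}\mathtt K_{p_j}(|x|)=F_{p_j}(|x|,u_{p_j},|\nabla u_{p_j}|)$, which holds pointwise with $w_j=u_{p_j}^{p_j-1}$ and $c_j=\beta_{p_j}/(p_j-1)$. Since $|\nabla u_{p_j}|$ converges only weakly-$*$, the nonlinearity in $\rho$ has to be handled by a convexity argument. Fix $R>0$ and a nonnegative test function $\varphi\in L^1(B_R)$, or even $\varphi\in C_c(B_R)$, $\varphi\ge0$.

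The first step is to dispose of the factors that converge strongly. By \eqref{33}, \eqref{35} and the uniform continuity of $F_1$ on $[0,R]\times[0,\xi_1]\times[0,\xi_2]$, we have
\[
\sup_{B_R}\bigl|F_{p_j}(|x|,u_{p_j},|\nabla u_{p_j}|)-F_1(|x|,u,|\nabla u_{p_j}|)\bigr|\to0 .
\]
On the left-hand side, $w_j^{c_j}\to w^{\lbp}$ in $L^q(B_R)$ and stays bounded; this is the same argument as \eqref{36}, using the bounds \eqref{55} and $c_j\to\lbp$. Combined with $\mathtt K_{p_j}(|x|)\stackrel{*}{\rightharpoonup}\mathtt K(|x|)$ from \eqref{94}, this gives $w_j^{c_j}\mathtt K_{p_j}\stackrel{*}{\rightharpoonup} w^{\lbp}\mathtt K$ in $L^\infty(B_R)$, because a strong $L^1$ times a weak-$*$ bounded sequence converges. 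Consequently
\[
\int_{B_R}\varphi\,F_1(|x|,u,|\nabla u_{p_j}|)\,\din x\to\int_{B_R}\varphi\,w^{\lbp}\mathtt K(|x|)\,\din x .
\]

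The second step identifies this limit using convexity. By Proposition~\ref{44}\ref{51}, $\rho_j:=|\nabla u_{p_j}|\rightharpoonup|\nabla u|$ weakly in $L^2(B_R)$. Mazur's lemma gives convex combinations $\tilde\rho_k=\sum_{j\ge k}\lambda^k_j\rho_j$ with $\tilde\rho_k\to|\nabla u|$ strongly in $L^2(B_R)$; passing to a subsequence, the convergence is also a.e. Each $\tilde\rho_k$ takes values in $[0,\xi_2]$. If $\rho\mapsto F_1(r,t,\rho)$ is convex, Jensen's inequality gives pointwise
\[
F_1(|x|,u,\tilde\rho_k)\le\sum_j\lambda^k_jF_1(|x|,u,\rho_j).
\]
The left side tends to $F_1(|x|,u,|\nabla u|)$ a.e. by continuity, and by dominated convergence its integral against $\varphi$ converges. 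The right side, integrated against $\varphi$, is a convex combination of terms $a_j:=\int\varphi F_1(|x|,u,\rho_j)$ with $a_j\to L:=\int\varphi w^{\lbp}\mathtt K$ by the first step, so it also tends to $L$. Hence $\int\varphi F_1(|x|,u,|\nabla u|)\le\int\varphi w^{\lbp}\mathtt K$ for every $\varphi\ge0$, which yields \eqref{eq:convex-bound-43} a.e. in $B_R$, and then on $\mathbb R^n$ since $R$ is arbitrary.

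The concave case follows by reversing the inequalities, or by applying the argument to $-F_1$. The affine case combines the two bounds to give \eqref{eq:affine-identification-43}; there one can even avoid Mazur altogether by writing $F_1=a+b\rho$ and using weak convergence directly. The final statement then follows from Theorem~\ref{81}, that is, from $-\operatorname{div}{\bf z}=\mathtt K$, $|{\bf z}|\le1$ and $({\bf z},Du)=|Du|$ established in Proposition~\ref{44}, together with $w>0$.

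The main obstacle is the first step, namely confirming that $w_j^{c_j}\to w^{\lbp}$ strongly with a uniform bound; this rests on the lower bound in \eqref{55}. A second point to check is that the weak convergence of $\rho_j$ on $B_R$ with Lebesgue measure follows from \ref{51}, which is stated with the weight $r^{n-1}$; polar coordinates handle this.
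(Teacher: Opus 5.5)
Your proposal is correct and follows essentially the same route as the paper: weak-$*$ convergence of $w_j^{c_j}\mathtt K_{p_j}(|x|)=F_{p_j}(|x|,u_{p_j},|\nabla u_{p_j}|)$ to $w^{\lbp}\mathtt K(|x|)$, uniform replacement by $F_1(|x|,u,|\nabla u_{p_j}|)$, Mazur's lemma with pointwise Jensen for the convex case, $-F_1$ for the concave case, and the conclusions of Proposition~\ref{44} for the final $1$-Laplacian statement. One small point: for that last step, cite Proposition~\ref{44} directly rather than Theorem~\ref{81}, since part (i) of Theorem~\ref{81} is itself deduced from this proposition.
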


\begin{proof}
\noindent{\bf Step 1.} Fix $R>0$. We claim that
\begin{equation}\label{eq:64-product-limit}
w_j^{c_j}\mathtt{K}_{p_j}(|x|)
\stackrel{*}{\rightharpoonup}
w^{\lbp}\mathtt{K}(|x|)
\quad\text{in }L^\infty(B_R).
\end{equation}

Indeed, let $\phi\in L^1(B_R)$. The local $L^\infty$ bound for
$\mathtt{K}_{p_j}$ supplied by \eqref{eq:Psi-uniform-43}, together with
the a.e. convergence and uniform boundedness of $w_j^{c_j}$ (by  Proposition \ref{44} \ref{52}), gives
\[
\int_{B_R}
\phi\,(w_j^{c_j}-w^{\lbp})\mathtt{K}_{p_j}(|x|)\,\din x
\to   0
\]
by dominated convergence. Also $\phi w^{\lbp}\in L^1(B_R)$, so
\[
\int_{B_R}
\phi w^{\lbp}
\bigl(\mathtt{K}_{p_j}(|x|)-\mathtt{K}(|x|)\bigr)\,\din x
\to   0,
\]
by \eqref{94}. This proves \eqref{eq:64-product-limit}. 

Then
\begin{equation}\label{eq:64-Fp-weak-star}
w_j^{c_j}{\mathtt{K}_{p_j}(|x|)}=F_{p_j}(|x|,u_{p_j},|\nabla u_{p_j}|)
\stackrel{*}{\rightharpoonup}
w^{\lbp}{\mathtt{K}(|x|)}
\quad\text{in }L^\infty(B_R).
\end{equation}

By \eqref{33}, $F_{p_j}\to F_1$ uniformly on compact sets. Moreover,
$u_{p_j}\to u$ uniformly on $B_R$ by \eqref{35}, and $F_1$ is uniformly
continuous on compact sets. Consequently,
\begin{equation}\label{eq:64-uniform-replacement}
\left\|
F_{p_j}(|x|,u_{p_j},|\nabla u_{p_j}|)
-F_1(|x|,u,|\nabla u_{p_j}|)
\right\|_{L^\infty(B_R)}
\to   0.
\end{equation}
It follows from \eqref{eq:64-Fp-weak-star} that
\begin{equation}\label{eq:64-F1-weak-star}
F_1(|x|,u,|\nabla u_{p_j}|)
\stackrel{*}{\rightharpoonup}
w^{\lbp}{\mathtt{K}(|x|)}
\quad\text{in }L^\infty(B_R).
\end{equation}

\medskip
\noindent{\bf Step 2:} {\it the convex case.}
Assume that $\rho\mapsto F_1(r,t,\rho)$ is convex on $[0,\xi_2]$ for
every $(r,t)\in[0,\infty)\times[0,\xi_1]$. Let
\[
\zeta \in C_c^\infty(B_R),
\qquad
{\zeta\geq0}.
\]

By Proposition \ref{44}\ref{51},
\[
{|\nabla u_{p_j}|\rightharpoonup|\nabla u|}
\quad\text{weakly in }L^2(B_R).
\]

By Mazur's lemma \cite{zbMATH05633610}, there are integers $N_m\geq m$ and numbers
$\lambda_{m,j}\geq0$, $m\leq j\leq N_m$, with
\[
\sum_{j=m}^{N_m}\lambda_{m,j}=1,
\]
such that the convex combinations
\[
g _m
:=\sum_{j=m}^{N_m}\lambda_{m,j}|\nabla u_{p_j}|
\]
satisfy
\begin{equation}\label{eq:64-mazur}
g_m \to    |\nabla u|
\quad\text{strongly in }L^2(B_R).
\end{equation}
After passing to a subsequence, we may also assume
\[
g_m\to    |\nabla u|
\quad\text{a.e. in }B_R.
\]

Since every $|\nabla u_{p_j}|$ takes values in $[0,\xi_2]$, the same is true of
$g_m$. Pointwise convexity therefore yields
\begin{equation}\label{eq:64-pointwise-convexity}
F_1(|x|,u(x), g_m(x))
\leq
\sum_{j=m}^{N_m}\lambda_{m,j}
F_1(|x|,u(x),|\nabla u_{p_j}| (x))
\end{equation}
for a.e. $x\in B_R$.

Dominated convergence gives
\begin{equation}\label{eq:64-left-limit}
\int_{B_R}
\zeta F_1(|x|,u,g_m)\,\din x
\to   
\int_{B_R}
\zeta F_1(|x|,u,|\nabla u|)\,\din x.
\end{equation}

From \eqref{eq:64-pointwise-convexity},
\begin{align*}
\int_{B_R}\zeta F_1(|x|,u, g_m)\,\din x
\leq & 
\sum_{j=m}^{N_m}\lambda_{m,j}
\int_{B_R}\zeta F_1(|x|,u,|\nabla u_{p_j}|)\,\din x\\
= & \sum_{j=m}^{N_m}\lambda_{m,j}
\int_{B_R}\zeta \left(F_1(|x|,u,|\nabla u_{p_j}|)-w^{\lbp}{\mathtt{K}(|x|)}\right)\,\din x+\int_{B_R}\zeta w^{\lbp}{\mathtt{K}(|x|)}\,\din x.
\end{align*}

By \eqref{eq:64-F1-weak-star} and \eqref{eq:64-left-limit}, we obtain
\[
\int_{B_R}
\zeta F_1(|x|,u,|\nabla u|)\,\din x
\leq
\int_{B_R}
\zeta w^{\lbp}{\mathtt{K}(|x|)}\,\din x.
\]

This implies
\[
F_1(|x|,u,|\nabla u|)
\leq w^{\lbp}{\mathtt{K}(|x|)}
\qquad\text{for a.e. }x\in B_R.
\]
As $R>0$ was arbitrary, \eqref{eq:convex-bound-43} follows on
$\mathbb R^n$.

\medskip
\noindent{\bf Step 3:} {\it the concave case and the affine case.}
If $\rho\mapsto F_1(r,t,\rho)$ is concave, then
$\rho\mapsto-F_1(r,t,\rho)$ is convex. Applying Step 2 to $-F_1$ and
using \eqref{eq:64-F1-weak-star} gives
\[
-F_1(|x|,u,|\nabla u|)
\leq-w^{\lbp}{\mathtt{K}(|x|)}
\qquad\text{a.e. in }\mathbb R^n,
\]
which is exactly \eqref{eq:concave-bound-43}.

If $\rho\mapsto F_1(r,t,\rho)$ is affine, then it is both convex and
concave. Thus \eqref{eq:convex-bound-43} and
\eqref{eq:concave-bound-43} hold simultaneously, and hence
\[
w^{\lbp}{\mathtt{K}(|x|)}
=F_1(|x|,u,|\nabla u|)
\qquad\text{a.e. in }\mathbb R^n.
\]
This proves \eqref{eq:affine-identification-43}.

\medskip

\noindent{\bf Step 4:} {\it identification of the $1$-Laplacian.} By Proposition \ref{44},
\[
-\operatorname{div}{\bf z}
=\mathtt{K}(|x|),
\qquad
|{\bf z}|\leq1\quad\text{a.e.},
\qquad
({\bf z},Du)=|Du|.
\]
Since $w>0$ a.e., \eqref{eq:affine-identification-43} is equivalent to
\[
{\mathtt{K}(|x|)}
=w^{-{\lbp}}F_1(|x|,u,|\nabla u|)
\qquad\text{a.e. in }\mathbb R^n.
\]
Therefore
\[
-\operatorname{div}{\bf z}
=w^{-{\lbp}}F_1(|x|,u,|\nabla u|).
\]
Together with the preceding bounds and calibration identity, Definition
\ref{113} gives
\[
-\Delta_1u
=w^{-{\lbp}}F_1(|x|,u,|\nabla u|) ,
\]
which is \eqref{eq:affine-limit-problem-43}. \end{proof}

Using $|v_{p_j}'|=\mathtt G_{p_j}^{1/(p_j-1)}$ and $\lim_{t\to0}\log(1+t)/t=1$, we obtain the pointwise limit of $|v_{p_j}'|$ from the assumed limit of $(\mathtt G_{p_j}-1)/(p_j-1)$. Dominated convergence and the previously established weak-* limits then identify $|v'|$ and the nonlinear source almost everywhere on a measurable set $E\subset{\mathtt G=1}$ with $|E|>0$, where the assumed limit exists and is finite almost everywhere.
\begin{lemma}\label{65}
Fix $r>0$ and assume that $\mathtt{G}(r)=1$ and 
\begin{equation}\label{66}
a(r)
:=\lim_{j\to\infty}
\frac{\mathtt{G}_{p_j}(r)-1}{p_j-1}
\in\mathbb R
\end{equation}
exists. Then
\begin{equation}\label{106}
|v_{p_j}'(r)|\to    e^{a(r)}.
\end{equation}

Suppose that $E\subset{s>0\mid \mathrm{G}(s)=1}$ is  measurable with $|E|>0$, and assume that \eqref{66} holds for a.e. $s\in E$. Then 
\begin{equation}\label{108}
|v'(s)|=e^{a(s)}
\qquad\text{for a.e. }s\in E,
\end{equation}
and
\begin{equation}\label{eq:full-source-identification-43}
\mathtt{K}(s)
=w(s)^{-{\lbp}}
F_1\!\left(s,v(s),|v'(s)|\right)
\qquad\text{for a.e. }s\in E.
\end{equation}
Equivalently, for a.e. $x\in\mathbb R^n$ such that $|x|\in E$,
\[
\mathtt{K}(|x|)
=w(x)^{-{\lbp}}
F_1\!\left(|x|,u(x),|\nabla u(x)|\right).
\]
\end{lemma}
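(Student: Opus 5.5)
The plan has three parts: compute the pointwise limit of $|v_{p_j}'(r)|$ from the defining relation, identify it with $|v'|$ on $E$ through the weak-$*$ convergence, and then pass to the limit in $\mathtt K_{p_j}$ by dominated convergence.

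\emph{Pointwise limit.} Recall from \eqref{eq:radial-identities-43} that $|v_{p_j}'(r)|=\mathtt G_{p_j}(r)^{1/(p_j-1)}$. Set $t_j:=\mathtt G_{p_j}(r)-1$. Since $\mathtt G(r)=1$, we have $t_j\to0$, and by \eqref{66} $t_j/(p_j-1)\to a(r)$. For large $j$ we have $1+t_j>0$, so we may write
\[
\log|v_{p_j}'(r)|=\frac{\log(1+t_j)}{t_j}\cdot\frac{t_j}{p_j-1},
\]
with the first factor read as $1$ when $t_j=0$. Because $\log(1+t)/t\to1$ as $t\to0$, the right-hand side tends to $a(r)$, which gives \eqref{106}.

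\emph{Identification of $|v'|$ on $E$.} By the first part, $|v_{p_j}'|\to e^{a}$ a.e. on $E$, and $|v_{p_j}'|\le\xi_2$. Fix $R>0$ and $\psi\in L^1(E\cap(0,R);r^{n-1}\din r)$, extended by zero. Dominated convergence gives
\[
\int\psi|v_{p_j}'|r^{n-1}\,\din r\to\int\psi e^{a}r^{n-1}\,\din r.
\]
Proposition \ref{44}\ref{51} identifies the same limit as $\int\psi|v'|r^{n-1}\,\din r$. Since $\psi$ is arbitrary, $|v'|=e^{a}$ a.e. on $E\cap(0,R)$ for every $R$, which is \eqref{108}. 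In particular $e^{a}\le\xi_2$ a.e. on $E$, so $a$ is locally bounded there.

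\emph{Source identification.} We work in $B_R\cap\{|x|\in E\}$. There $\nabla u_{p_j}\to$ a limit of modulus $|\nabla u|=e^{a(|x|)}$ pointwise a.e., and $u_{p_j}\to u$ uniformly by \eqref{35}. Combining \eqref{33} with the continuity of $F_1$ gives
\[
F_{p_j}(|x|,u_{p_j},|\nabla u_{p_j}|)\to F_1(|x|,u,|\nabla u|)\quad\text{a.e.}
\]
Proposition \ref{44}\ref{52} gives $w_j^{-c_j}\to w^{-\lbp}$ a.e. with uniform bounds on balls. Hence $\mathtt K_{p_j}(|x|)=w_j^{-c_j}F_{p_j}(\cdot)$ converges a.e. on this set, and it is bounded by $C_R$ from Lemma \ref{30}. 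Dominated convergence on $L^1$ test functions supported in $\{|x|\in E\}\cap B_R$, compared with the weak-$*$ limit \eqref{94}, yields
\[
\mathtt K(|x|)=w^{-\lbp}F_1(|x|,u,|\nabla u|)\quad\text{a.e. there.}
\]
The radial statement \eqref{eq:full-source-identification-43} follows by polar coordinates, since $w(x)=\widetilde w(|x|)$.

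The main obstacle is only the bookkeeping: for each $R$ one must choose $j$ large enough that $p_j<p_R$ and $1+t_j>0$, and these thresholds depend on the point $r$. Since the arguments are pointwise a.e. and then dominated, this dependence causes no difficulty.
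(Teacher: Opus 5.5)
Your proposal is correct and follows essentially the paper's proof. It obtains the pointwise limit via $\log(1+t)/t\to1$, identifies $|v'|=e^{a}$ on $E$ by dominated convergence against the weak-$*$ limit of $|v_{p_j}'|$ from Proposition~\ref{44}\ref{51}, and identifies $\mathtt K$ through a.e.\ convergence of $\mathtt K_{p_j}=w_j^{-c_j}F_{p_j}(\cdots)$ with the bound $C_R$ against the weak-$*$ limit of $\mathtt K_{p_j}$. You do this last step in $\mathbb R^n$ via \eqref{94} while the paper works in the radial variable, which is immaterial; one harmless slip is that $e^{a}=|v'|\le\xi_2$ only gives $a\le\log\xi_2$ on $E$, not local boundedness of $a$, but you never use that claim.
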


\begin{proof}
Set
\[
\delta_j:=\mathtt{G}_{p_j}(r)-1\leq \xi _2^{p_j -1} -1.
\]
By \eqref{66}, $\delta_j/(p_j-1)\to a(r)$; in particular
$\delta_j\to0$. 

Since $\mathtt G_{p_j}(r)\to1$, there is $j_0>0$ such that $\mathtt G_{p_j}(r)>0$ for $j>j_0$. Using \eqref{eq:radial-identities-43},
\[
|v_{p_j}'(r)|
=\mathtt{G}_{p_j}(r)^{\frac{1}{p_j-1}}
=\exp\!\left(
\frac{\log(1+\delta_j)}{p_j-1}
\right).
\]

Define $\sigma:(-1,\infty)\to\mathbb R$ by  $\sigma(t):=t^{-1}\log(1+t)$ if $t\neq0$, and $\sigma(t):=1$ if $t=0$. Since $\sigma(\delta_j)\to1$,
\[
\frac{\log(1+\delta_j)}{p_j-1}
=
\frac{\delta_j}{p_j-1}\sigma (\delta_j)
\to    a(r).
\]
Thus \eqref{106} follows.

The preceding pointwise argument gives
\[
|v_{p_j}'(s)|\to    e^{a(s)}
\qquad\text{for a.e. }s\in E.
\]

Fix $R>0$. By \eqref{34}, $|v_{p_j}'|\leq\xi_2$, and dominated convergence gives
\begin{gather*}
|v_{p_j}'|\stackrel{*}{\rightharpoonup}e^a
\qquad\text{in }L^\infty
\bigl([0,R]\cap E;s^{n-1}\,\din s\bigr).
\end{gather*}
By Proposition \ref{44}\ref{51}, $e^a=|v'|$, and therefore
\eqref{108} holds.

Finally, for a.e. $s\in E$, Proposition \ref{44}\ref{52} gives
\[
v_{p_j}(s)\to v(s),
\qquad
v_{p_j}(s)^{-\beta_{p_j}}
=\bigl(v_{p_j}(s)^{p_j-1}\bigr)^{-c_j}
\to w(s)^{-{\lbp}}.
\]
Thus, by \eqref{33} and \eqref{106},
\[
\begin{aligned}
\mathtt{K}_{p_j}(s)
&=F_{p_j}\!\left(s,v_{p_j}(s),|v_{p_j}'(s)|\right)
   v_{p_j}(s)^{-\beta_{p_j}}\\
&\to   
w(s)^{-{\lbp}}
F_1\!\left(s,v(s),e^{a(s)}\right)\\
&=w(s)^{-{\lbp}}
F_1\!\left(s,v(s),|v'(s)|\right).
\end{aligned}
\]

Lemma \ref{30}\ref{47} gives $\mathtt{K}_{p_j}\leq C_R$ on $[0,R]$, so dominated convergence gives
$$
\mathtt{K}_{p_j}\stackrel{*}{\rightharpoonup}w^{-{\lbp}}
F_1\!\left(s,v(s),|v'(s)|\right)
\quad\text{in }L^\infty([0,R]\cap E;s^{n-1}\din s).
$$
Hence Proposition \ref{44}\ref{56} implies 
\[
{\mathtt{K}(s)=w(s)^{-{\lbp}}F_1\!\left(s,v(s),|v'(s)|\right)}
\quad\text{for a.e. }s\in E,
\]
which is \eqref{eq:full-source-identification-43}.
\end{proof}

The monotonicity of $r\mapsto F_{p_j}(r,0,0)$ yields a lower bound for $\mathtt G_{p_j}(r)$. At points where $F_1(r,0,0)=n/r$, a first-order expansion gives a lower bound for $\liminf_{j\to\infty}(\mathtt G_{p_j}(r)-1)/(p_j-1)$, provided the finite limit in \eqref{75} exists. The uniform derivative bound gives an upper bound for the corresponding limit superior at every $r>0$.
\begin{lemma}\label{67}
Under the notation and conclusions of Proposition 
\ref{44}, the following statements hold.
\begin{enumerate}[label=$(\roman*)$]
\item \label{69} Assume that, for all sufficiently large $j$,
\begin{equation}\label{eq:optional-monotonicity-43}
r\mapsto  
F_{p_j}(r,0,0) 
\qquad\text{is nonincreasing on }[0,\infty).
\end{equation}
Then, for every $r>0$ and all sufficiently large $j$,
\begin{equation}\label{72}
\mathtt{G}_{p_j}(r)
\geq
\frac{r}{n}\,\xi_1^{-\beta_{p_j}}F_{p_j}(r,0,0).
\end{equation}
Consequently,
\begin{equation}\label{73}
\frac{r}{n}F_1(r,0,0)
\leq \mathtt{G}(r)\leq1,
\qquad r>0.
\end{equation}

\item \label{70} Assume \eqref{eq:optional-monotonicity-43}. If, at a
point $r>0$ satisfying $F_1(r,0,0)=n/r$, the finite limit
\begin{equation}\label{75}
d(r):=
\lim_{j\to\infty}
\frac{F_{p_j}(r,0,0)-F_1(r,0,0)}{p_j-1}
\in\mathbb R
\end{equation}
exists, then
\begin{equation}\label{76}
\frac{r}{n}\xi_1^{-\beta_{p_j}}F_{p_j}(r,0,0)
=
1+(p_j-1)
\left(
\frac{r}{n}d(r)-{\lbp}\log\xi_1
\right)
+o(p_j-1),
\end{equation}
and therefore
\begin{equation}\label{77}
\frac{r}{n}d(r)-{\lbp}\log\xi_1
\leq
\liminf_{j\to\infty}
\frac{\mathtt{G}_{p_j}(r)-1}{p_j-1}.
\end{equation}

\item \label{71} For every fixed $r>0$,
\begin{equation*}\label{eq:critical-upper-rate-67}
\limsup_{j\to\infty}
\frac{\mathtt{G}_{p_j}(r)-1}{p_j-1}
\leq \log\xi_2.
\end{equation*}
\end{enumerate}
\end{lemma}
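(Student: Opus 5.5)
The three parts follow from the representation of $\mathtt G_{p_j}$ as a radial average of $\mathtt K_{p_j}$, the monotonicity hypotheses \ref{2}, and the bound $|v_{p_j}'|\le\xi_2$ from \eqref{34}.

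\emph{Part \ref{69}.} Start from
\[
\mathtt G_{p_j}(r)=r^{1-n}\int_0^r s^{n-1}\mathtt K_{p_j}(s)\,\din s .
\]
By \eqref{34} we have $0<v_{p_j}(s)\le\xi_1$ and $0\le|v_{p_j}'(s)|\le\xi_2$. The monotonicity of $F_{p_j}$ in each of its last two variables on $[0,\xi_1]\times[0,\xi_2]$ (assumption \ref{2}) then gives
\[
F_{p_j}\bigl(s,v_{p_j}(s),|v_{p_j}'(s)|\bigr)\ge F_{p_j}(s,0,0).
\]
Since $\beta_{p_j}\ge0$ and $v_{p_j}\le\xi_1$, we also have $v_{p_j}(s)^{-\beta_{p_j}}\ge\xi_1^{-\beta_{p_j}}$. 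Hypothesis \eqref{eq:optional-monotonicity-43} gives $F_{p_j}(s,0,0)\ge F_{p_j}(r,0,0)$ for $0\le s\le r$. Combining these,
\[
\mathtt K_{p_j}(s)\ge \xi_1^{-\beta_{p_j}}F_{p_j}(r,0,0)\quad\text{for }s\in[0,r],
\]
and integrating against $s^{n-1}$ yields \eqref{72}.

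To obtain \eqref{73}, let $j\to\infty$. Since $0\le\beta_{p_j}\le p_j-1\to0$, we get $\xi_1^{-\beta_{p_j}}\to1$. Assumption \eqref{33} gives $F_{p_j}(r,0,0)\to F_1(r,0,0)$, and Proposition~\ref{44} gives $\mathtt G_{p_j}(r)\to\mathtt G(r)$. The upper bound $\mathtt G\le1$ was already proved in Proposition~\ref{44}.

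\emph{Part \ref{70}.} Write $\beta_{p_j}=(p_j-1)c_j$ with $c_j\to\lbp$, so that
\[
\xi_1^{-\beta_{p_j}}=\exp\bigl(-(p_j-1)c_j\log\xi_1\bigr)=1-(p_j-1)\lbp\log\xi_1+o(p_j-1).
\]
By \eqref{75} and $F_1(r,0,0)=n/r$,
\[
\frac rn F_{p_j}(r,0,0)=1+(p_j-1)\frac rn d(r)+o(p_j-1).
\]
Multiplying the two expansions and absorbing the product of first-order terms into $o(p_j-1)$ gives \eqref{76}. Subtracting $1$ from \eqref{72}, dividing by $p_j-1$, and using \eqref{76} then gives \eqref{77} after taking $\liminf$.

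\emph{Part \ref{71}.} By \eqref{eq:radial-identities-43} and \eqref{34}, $\mathtt G_{p_j}(r)=|v_{p_j}'(r)|^{p_j-1}\le\xi_2^{p_j-1}$. Hence
\[
\frac{\mathtt G_{p_j}(r)-1}{p_j-1}\le\frac{\xi_2^{p_j-1}-1}{p_j-1}\longrightarrow\log\xi_2,
\]
since $(a^{t}-1)/t\to\log a$ as $t\downarrow0$.

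The only step needing care is the bookkeeping in \eqref{76}: one must check that the error terms, namely $c_j-\lbp$ multiplied by $(p_j-1)$ and the cross term of order $(p_j-1)^2$, are genuinely $o(p_j-1)$. This holds because $c_j\to\lbp$ and $d(r)$ is finite. Everything else is monotonicity and passage to the limit.
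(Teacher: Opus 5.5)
Your proposal is correct and follows the paper's proof essentially step for step. You use the pointwise lower bound $\mathtt K_{p_j}(s)\ge\xi_1^{-\beta_{p_j}}F_{p_j}(r,0,0)$ on $[0,r]$ from \ref{2}, $v_{p_j}\le\xi_1$ and the monotonicity in $r$; the same first-order expansions for \eqref{76}--\eqref{77}; and the bound $\mathtt G_{p_j}(r)\le\xi_2^{p_j-1}$ for part (iii). The only implicit point, which the paper also leaves unstated, is that multiplying the inequalities uses $F_{p_j}\ge0$ on the relevant range, and this is supplied by \ref{1}.
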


\begin{proof}

\noindent\ref{69}
Fix $0<s\leq r$ and a sufficiently large $j$.  By \eqref{eq:optional-monotonicity-43}, condition \ref{2}, and $v_{p_j}\leq\xi_1$, we have
\[
\mathtt{K}_{p_j}(s)={v_{p_j}(s)^{-\beta_{p_j}}}F_{p_j}\!\left(s,v_{p_j}(s),|v_{p_j}'(s)|\right)
\geq
\xi_1^{-\beta_{p_j}}F_{p_j}(s,0,0)
\geq
\xi_1^{-\beta_{p_j}}F_{p_j}(r,0,0).
\]

Using the definition of $\mathtt{G}_{p_j}$,
\begin{align*}
\mathtt{G}_{p_j}(r)
&=r^{1-n}\int_0^r s^{n-1}\mathtt{K}_{p_j}(s)\,\din s\\
&\geq
r^{1-n}\xi_1^{-\beta_{p_j}}F_{p_j}(r,0,0)
\int_0^r s^{n-1}\,\din s\\
&=
\frac{r}{n}\xi_1^{-\beta_{p_j}}F_{p_j}(r,0,0),
\end{align*}
which proves \eqref{72}.

Now let $j\to\infty$. By \eqref{eq:Ap-limit-43},
$\mathtt{G}_{p_j}(r)\to\mathtt{G}(r)$. By \eqref{33},
$F_{p_j}(r,0,0)\to F_1(r,0,0)$, while \eqref{32} implies
$\beta_{p_j}\to0$.
Passing to the limit in \eqref{72} yields
\[
\frac{r}{n}F_1(r,0,0)\leq\mathtt{G}(r).
\]
The bound $\mathtt{G}\leq1$ is part of Proposition \ref{44}. This proves
\eqref{73}.

\medskip

\noindent\ref{70}
Put $h_j:=p_j-1$. From \eqref{75},
\[
F_{p_j}(r,0,0)
=F_1(r,0,0)+h_jd(r)+o(h_j).
\]
Since $rF_1(r,0,0)=n$,
\begin{equation}\label{eq:F-expansion-67}
\frac{r}{n}F_{p_j}(r,0,0)
=1+h_j\frac{r}{n}d(r)+o(h_j).
\end{equation}
On the other hand, \eqref{32} gives
\[
\beta_{p_j}=h_j{\lbp}+o(h_j).
\]
Consequently,
\begin{align*}
\xi_1^{-\beta_{p_j}}
&=\exp\!\left(-\beta_{p_j}\log\xi_1\right)\\
&=1-h_j{\lbp}\log\xi_1+o(h_j).
\end{align*}
Multiplying this expansion by \eqref{eq:F-expansion-67} proves \eqref{76}.

Finally, \eqref{72} gives
\[
\frac{\mathtt{G}_{p_j}(r)-1}{h_j}
\geq
\frac{1}{h_j}
\left(
\frac{r}{n}\xi_1^{-\beta_{p_j}}F_{p_j}(r,0,0)-1
\right).
\]
Taking the limit inferior and using \eqref{76} proves \eqref{77}.

\medskip

\noindent\ref{71}
Equations \eqref{eq:radial-identities-43} and \eqref{34} give
\[
\mathtt{G}_{p_j}(r)
=|v_{p_j}'(r)|^{p_j-1}
\leq \xi_2^{p_j-1}.
\]
Therefore,
\[
\frac{\mathtt G_{p_j}(r)-1}{p_j-1}
\leq
\frac{\xi_2^{p_j-1}-1}{p_j-1}
\to   \log\xi_2,
\]
which proves \ref{71}.
\end{proof}

\subsection{Proofs of the Main Results}

\begin{proof}[Proof of Theorem \ref{80}]
Proposition \ref{44} gives, after extraction,
\eqref{eq:80-compactness}--\eqref{eq:80-complementarity}.
Lemma \ref{82}\ref{62} gives
\eqref{95}. Lemma
\ref{82}\ref{63} gives
\eqref{eq:80-origin}--\eqref{eq:80-origin-phase}.
\end{proof}

\begin{proof}[Proof of Theorem \ref{81}]
By \eqref{eq:80-divergence}, \eqref{eq:80-calibration}, and Definition
\ref{113},
\[
 -\Delta_1u
 =\mathtt K(|x|),
\]
which proves \eqref{eq:81-global-1laplace}.

\medskip
\noindent\ref{thm81-affine} This follows from Proposition \ref{64}.

\medskip
\noindent\ref{thm81-critical} This follows from Lemma \ref{82} and Lemma \ref{65}. \end{proof}

%%
%%
%%
%%
%%%
%%
%%

%%
%%
%%
%%
%%%
%%
%%

%%
%%
%%
%%
%%%
%%
%%

%%
%%
%%
%%
%%%
%%
%%

%%
%%
%%
%%
%%%
%%
%%

%%
%%
%%
%%
%%%
%%
%%

%%
%%
%%
%%
%%%
%%
%%

%%
%%
%%
%%
%%%
%%
%%

\section{The one-dimensional case $n=1$}\label{96}

In this section, we work with
\[
n=1,\qquad 1<p<2.
\]

%Existence theory for one-dimensional $p$-Laplacian equations with $p>1$ has been developed by several methods, including Leray--Schauder degree, upper and lower solutions, and fixed-point arguments; see \cite{DelPinoElguetaManasevich1989,JiangGao2002,Liu2010}. The distinction between the singular interval $1<p<2$ and the degenerate interval $p>2$ is standard; see \cite[Chapters 1--4]{Lindqvist2019}.

Let
\[
\ell\geq0,\qquad
\xi_1\geq1,\qquad
\xi_2>0,
\]
and, for every fixed $p>1$, let
\[
0\leq\beta_p\leq p-1.
\]

For $h\in C([0,\infty))$, $h\geq 0$, define
\begin{equation}\label{eq:n-one-Theta}
(\Theta_{p}h)(t)
:=
\int_0^t
\left[
\int_0^s h(r)\,\din r
\right]^{{\qpu}   }\din s,
\qquad t\geq0.
\end{equation}

\begin{lemma}\label{lem:n-one-estimates}

Let $p>1$, let $h\in C([0,\infty))$ with $h\geq0$, and suppose that
\[
M(h):=\int_0^\infty h(t)\,\din t<\infty.
\]
Then
\begin{equation}\label{eq:n-one-Theta-estimates}
0\leq(\Theta_{p}h)'(t)
\leq M(h)^{\qpu}   ,
\qquad
0\leq(\Theta_{p}h)(t)
\leq M(h)^{\qpu}    \ftes (t) ,
\qquad t\geq0,
\end{equation}
where $\ftes (t) :=\max \{\ell , t\}$.
\end{lemma}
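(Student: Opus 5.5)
The plan is to argue directly from the definition of $\Theta_p$ in \eqref{eq:n-one-Theta}, using only the fundamental theorem of calculus and the monotonicity of integrals of nonnegative functions; no earlier result of the paper is needed beyond elementary facts. Throughout, $\qpu=1/(p-1)>0$ and $h\geq0$ is continuous.

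\textbf{Derivative bound.} Set $H(s):=\int_0^s h(r)\,\din r$. Then $H$ is continuous and nondecreasing on $[0,\infty)$, with $0\leq H(s)\leq M(h)$. Hence $H^{\qpu}$ is continuous, and the fundamental theorem of calculus gives $(\Theta_p h)'(t)=H(t)^{\qpu}$ for every $t\geq0$. Because $x\mapsto x^{\qpu}$ is nondecreasing on $[0,\infty)$, we obtain
\[
0\leq(\Theta_p h)'(t)\leq M(h)^{\qpu}.
\]

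\textbf{Bound on $\Theta_p h$.} Integrating the derivative bound from $0$ to $t$ gives
\[
0\leq(\Theta_p h)(t)\leq M(h)^{\qpu}\,t.
\]
Since $t\leq\max\{\ell,t\}=\ftes(t)$ and $M(h)^{\qpu}\geq0$, this yields $(\Theta_p h)(t)\leq M(h)^{\qpu}\ftes(t)$.

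No step is a real obstacle. The only points to watch are the use of continuity of $h$, which is needed for differentiability of $\Theta_p h$ everywhere, including at $t=0$, and the observation that the case $\ell=0$ is allowed and still gives $\ftes(t)=t\geq t$.
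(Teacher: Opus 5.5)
Your proposal is correct and follows essentially the same route as the paper: the fundamental theorem of calculus gives $(\Theta_p h)'(t)=\bigl[\int_0^t h\bigr]^{\qpu}\le M(h)^{\qpu}$, and integrating from $0$ to $t$ together with $t\le\max\{\ell,t\}$ gives the second bound. Your remarks on the role of continuity of $h$ and on the case $\ell=0$ are sound but add nothing the paper's argument needs beyond what it already uses.
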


\begin{proof}
Since $h\geq0$,
\[
0\leq
\int_0^t h(r)\,\din r
\leq M(h).
\]
Therefore,
\[
0\leq
(\Theta_{p}h)'(t)
=
\left[
\int_0^t h(r)\,\din r
\right]^{\qpu}   
\leq M(h)^{\qpu}   .
\]
Integrating from $0$ to $t$ gives
\[
0\leq
(\Theta_{p}h)(t)
\leq tM(h)^{\qpu}   
\leq\max\{\ell,t\}M(h)^{\qpu}   
=\ftes (t) M(h)^{\qpu}   .
\]
\end{proof}

The following assumptions are motivated by conditions (F1), (F2a), and (F3a$\infty$) in \cite{Qi2010}. Throughout, we assume that:
\begin{enumerate}[label=$(\fnu_{\arabic*})$]
\item \label{83}
There exists $\theta\in(0,1]$ such that $\fnu _p\in
C_{\loc}^{0,\theta}
([0,\infty)^3)$, and $\fnu _p(t,u,s)\geq 0$.

\item \label{84}
For every fixed $t\geq0$, the function $u\mapsto   \fnu _p(t,u,s)$ is nondecreasing on $[1,\xi_1(1+\ftes (t) )]$ for every $s\in[0,\xi_2]$, and  $s\mapsto   \fnu _p(t,u,s)$
is nondecreasing on $[0,\xi_2]$ for every $u\in[1,\xi_1(1+\ftes (t) )]$.

\item \label{85} With $m_\xi:=\min\{\xi_1,\xi_2\}$, one has
\begin{equation}\label{eq:n-one-structural-less-two}
\int_0^\infty
\fnu _p\left(
t,\xi_1(1+\ftes (t) ),\xi_2
\right)\,\din t
\leq
m_\xi^{p-1}.
\end{equation}
\end{enumerate}

\begin{proposition}\label{prop:n-one-less-two}
Assume conditions \ref{83}--\ref{85}. Then
\eqref{117} has at least one even positive entire
solution
\[
u_p(x)=y_p(|x|).
\]
It satisfies
\begin{equation}\label{eq:n-one-bounds-less-two}
1\leq y_p(t)\leq\xi_1(1+\ftes (t) ),
\qquad
0\leq y_p'(t)\leq\xi_2,
\qquad t\geq0,
\end{equation}
and
\[
y_p(0)=1,\qquad y_p'(0)=0.
\]

Moreover, if
\begin{equation}\label{97}
\fnu _p(r_0,1,0) \neq 0 \quad \text{for some } r_0\in[0,\infty),
\end{equation}
then
\begin{equation}\label{eq:n-one-asymptotic-less-two}
\lim_{t\to\infty}\frac{y_p(t)}{\ftes (t) }
=C_{p},
\end{equation}
where
\begin{equation}\label{eq:n-one-C-less-two}
C_p   
=
\left[
\int_0^\infty
\fnu _p(t,y_p(t),y_p'(t))
y_p(t)^{-\beta_p}\,\din t
\right]^{\frac1{p-1}}
\end{equation}
and $0<C_p   \leq m_\xi$.
\end{proposition}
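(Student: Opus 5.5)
We will prove Proposition \ref{prop:n-one-less-two} by the same fixed-point scheme as in Proposition \ref{25}, now built on the operator $\Theta_p$ of \eqref{eq:n-one-Theta}. Since $y_p(0)=1$ and $y_p'(0)=0$, the radial form of \eqref{117} for $x\ge0$ reads $\bigl((y_p')^{p-1}\bigr)'=\fnu_p(t,y_p,y_p')y_p^{-\beta_p}$ with $y_p'\ge0$. Integrating twice, the problem becomes
\[
y=1+\Theta_p(\mathcal K_1 y),\qquad \mathcal K_1y(t):=\fnu_p\bigl(t,y(t),y'(t)\bigr)y(t)^{-\beta_p}.
\]
We work on the convex set
\[
Q_1:=\bigl\{g\in C^1([0,\infty))\mid 1\le g\le \xi_1(1+\ftes),\ 0\le g'\le\xi_2,\ g(0)=1,\ g'(0)=0\bigr\},
\]
with the $C^1_{\loc}$ seminorms $\mathbf s_m$. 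The set $Q_1$ is nonempty because $g\equiv1$ belongs to it. We define $T_1g:=1+\Theta_p(\mathcal K_1g)$.

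\textbf{Step 1: self-map and bounds.} For $g\in Q_1$ we have $g\ge1$, so $g^{-\beta_p}\le1$. By \ref{84}, $0\le\mathcal K_1g(t)\le\fnu_p(t,\xi_1(1+\ftes(t)),\xi_2)=:A_1(t)$. By \ref{85}, $M(A_1)\le m_\xi^{p-1}$. Lemma \ref{lem:n-one-estimates} then gives
\[
0\le(T_1g)'\le m_\xi\le\xi_2,\qquad 1\le T_1g\le 1+m_\xi\ftes\le\xi_1(1+\ftes),
\]
using $\xi_1\ge1$. Moreover $T_1g(0)=1$ and $(T_1g)'(0)=0$ hold directly from the definition. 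Hence $T_1(Q_1)\subset Q_1$.

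\textbf{Step 2: continuity and compactness.} For continuity, take $g_i\to g$ in $C^1_{\loc}$. Local H\"older continuity of $\fnu_p$ gives $\mathcal K_1g_i\to\mathcal K_1 g$ uniformly on each $[0,R]$, and domination by $A_1$ gives convergence in $L^1(0,R)$. Lemma \ref{15}, applied with the bound $\int_0^sA_1\le m_\xi^{p-1}$, yields $(T_1g_i)'\to(T_1g)'$ uniformly on $[0,R]$, and integrating gives convergence of $T_1g_i$ as well. For compactness we need equicontinuity of $(T_1g)'=(\int_0^t\mathcal K_1g)^{1/(p-1)}$. Local equicontinuity of $\int_0^t\mathcal K_1 g$ follows from the local boundedness of $A_1$, which holds since $\fnu_p$ is continuous. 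Composing with the uniformly continuous map $s\mapsto s^{1/(p-1)}$ on $[0,m_\xi^{p-1}]$ (note $1/(p-1)>1$) gives equicontinuity of the derivatives. Arzel\`a--Ascoli and a diagonal argument give relative compactness, and $\overline{T_1(Q_1)}\subset Q_1$ because the defining inequalities of $Q_1$ are closed conditions. Schauder--Tychonoff then produces a fixed point $y_p$. Its even extension $u_p(x):=y_p(|x|)$ has $(|u_p'|^{p-2}u_p')(x)=\operatorname{sgn}(x)G_p(|x|)$, which is $C^1$ away from $0$ and continuous at $0$ with value $0$. Therefore $u_p$ solves \eqref{117} on all of $\mathbb R$, and \eqref{eq:n-one-bounds-less-two} is inherited from $Q_1$.

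\textbf{Step 3: asymptotics under \eqref{97}.} Since $y_p'(t)=G_p(t)^{1/(p-1)}$ and $G_p$ is nondecreasing with limit $\int_0^\infty k_p=C_p^{p-1}$, we get $y_p'(t)\to C_p$. L'H\^opital's rule then gives $y_p(t)/\ftes(t)=y_p(t)/t\to C_p$ for large $t$. The upper bound $C_p\le m_\xi$ comes from $k_p\le A_1$ and \ref{85}.

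\textbf{Main obstacle: positivity $C_p>0$.} This is the step I expect to be the delicate one. The plan is to use monotonicity: since $y_p\ge1$ and $y_p'\ge0$, \ref{84} gives $\fnu_p(t,y_p,y_p')\ge\fnu_p(t,1,0)$. Then $\fnu_p(r_0,1,0)>0$ together with continuity gives a positive integral of $\fnu_p(\cdot,1,0)$ near $r_0$. The difficulty is that \ref{84} only asserts monotonicity on $u\in[1,\xi_1(1+\ftes(t))]$, so I will need to check that $y_p$ stays in that range, which it does by \eqref{eq:n-one-bounds-less-two}. The factor $y_p^{-\beta_p}$ is bounded below by $(\xi_1(1+\ftes))^{-\beta_p}>0$, which is positive near $r_0$. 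Together these give $\int_0^\infty k_p>0$, hence $C_p>0$.
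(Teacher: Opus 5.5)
Your proposal is correct and follows essentially the same route as the paper. The paper also takes a Schauder--Tychonoff fixed point of $y\mapsto 1+\Theta_p(\mathcal K_1 y)$ on the closed convex set of $C^1$ functions with the barriers $1\le y\le\xi_1(1+\ftes)$, $0\le y'\le\xi_2$. It then derives the asymptotics from the monotone limit of $G_p$ and l'H\^opital, and gets $C_p>0$ from the bound $\fnu_p(t,y_p,y_p')\ge\fnu_p(t,1,0)$ near $r_0$. The only differences are cosmetic: you build $y(0)=1$, $y'(0)=0$ into $Q_1$; you use Lemma~\ref{15} rather than a mean-value bound for continuity; and you use uniform continuity of $s\mapsto s^{1/(p-1)}$ instead of the paper's Lipschitz estimate to get equicontinuity of the derivatives.
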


\begin{proof}
 \noindent {\bf Step 1.} Put
\[
U(t):=\xi_1(1+\ftes (t) ),
\qquad
G_p(t):=\fnu _p(t,U(t),\xi_2).
\]

Condition \ref{85} gives
\begin{equation}\label{eq:n-one-H-less-two}
\left(\int_0^\infty G_p(t)\,\din t \right)  ^{\qpu}   \leq m_\xi.
\end{equation}

Let $X=C_{\loc}^1([0,\infty))$ with its usual Fr\'echet topology, and define
\[
Q=
\left\{
y\in X \mid 
1\leq y(t)\leq U(t),\
0\leq y'(t)\leq\xi_2,\
t\geq0
\right\}.
\]
The constant function $y(t)\equiv1$ belongs to $Q$; hence $Q$ is
nonempty. It is also closed and convex.

For $y\in Q$, set
\[
g_y(t)
:=
\fnu _p(t,y(t),y'(t))y(t)^{-\beta_p}
\]
and
\begin{equation}\label{eq:n-one-T-less-two}
(Ty)(t):=1+(\Theta_{p}g_y)(t).
\end{equation}

Since $y\geq1$, one has $y^{-\beta_p}\leq1$. By
condition \ref{84},
\begin{equation}\label{eq:n-one-gy-less-two}
g_y(t)
\leq
\fnu _p(t,U(t),\xi_2)
=G_p(t).
\end{equation}
Lemma \ref{lem:n-one-estimates} and
\eqref{eq:n-one-H-less-two} imply
\begin{align*}
1\leq(Ty)(t)
&\leq
1+\left(\int_0^\infty G_p(t)\,\din t \right)  ^{\qpu}    \ftes (t) \\
&\leq
1+m_\xi \ftes (t) \\
&\leq
\xi_1(1+\ftes (t) ),
\end{align*}
where the final inequality uses $\xi_1\geq1$ and
$m_\xi\leq\xi_1$. Furthermore,
\[
0\leq(Ty)'(t)
\leq
\left(\int_0^\infty G_p(t)\,\din t \right)  ^{\qpu}   
\leq
m_\xi
\leq\xi_2.
\]
Thus $T(Q)\subset Q$.

\medskip 

\noindent {\bf Step 2.} We next prove continuity. Suppose that
$y_j,y\in Q$ and $y_j\to y_0$ in $X$, and fix $R>0$. Then
\[
\epsilon  _{j,R}
:=
\sup_{0\leq t\leq R}
|g_{y_j}(t)-g_{y_0}(t)|
\to   0.
\]

Define
\[
A_j(t):=\int_0^t g_{y_j}(r)\,\din r,
\qquad j=0,1,2, \ldots.
\]

Then
\begin{equation}\label{eq:n-one-A-continuity-less-two}
\sup_{0\leq t\leq R}|A_j(t)-A_0 (t) |
\leq
R\epsilon  _{j,R}.
\end{equation}

From \eqref{eq:n-one-gy-less-two}, 
\[
0\leq A_j(t),A_0 (t) \leq c_R R, \qquad c_R:= \max_{0\leq t\leq R}G_p(t).
\]

Since ${\qpu}   >1$, the mean value theorem yields
\[
|r^{\qpu}   -s^{\qpu}   |
\leq
{\qpu}   (c_RR)^{{\qpu}   -1}|r-s|,
\qquad
0\leq r,s\leq c_RR.
\]

Consequently,
\begin{align*}
\sup_{0\leq t\leq R}
|(Ty_j)'(t)-(Ty)'(t)|
&\leq
{\qpu}   (c_RR)^{{\qpu}   -1}
R\epsilon  _{j,R}
\to   0,\\
\sup_{0\leq t\leq R}
|(Ty_j)(t)-(Ty)(t)|
&\leq
{\qpu}    R^2(c_RR)^{{\qpu}   -1}
\epsilon  _{j,R}
\to   0.
\end{align*}

Hence $T:Q\to Q$ is continuous in $X$.

\medskip

\noindent {\bf Step 3.} To prove relative compactness, observe that, for $y\in Q$ and
$s,t\in[0,R]$,
\[
|A_0(t)-A_0(s)|
=
\left|
\int_s^t g_y(r)\,\din r
\right|
\leq c_R|t-s|.
\]

Therefore,
\begin{equation}\label{eq:n-one-derivative-Lipschitz-less-two}
|(Ty)'(t)-(Ty)'(s)|
\leq
{\qpu}   (c_RR)^{{\qpu}   -1}c_R|t-s|.
\end{equation}

The families $\{Ty \mid y\in Q\}$ and $\{(Ty)'\mid y\in Q\}$ are uniformly bounded and equicontinuous on
$[0,R]$. The Arzel\`a--Ascoli theorem and a diagonal argument imply
that $T(Q)$ is relatively compact in $X$.

The Schauder--Tychonoff theorem
\cite[Theorem 10.1]{zbMATH07063009} now gives a fixed point
$y_p\in Q$. Thus
\[
y_p(t)
=
1+\int_0^t
\left[
\int_0^s
\fnu _p(r,y_p(r),y_p'(r))
y_p(r)^{-\beta_p}\,\din r
\right]^{\qpu}   \din s.
\]

It follows that
\begin{equation}\label{98}
|y_p'(t)|^{p-2}y_p'(t)
=
\int_0^t
\fnu _p(r,y_p(r),y_p'(r))
y_p(r)^{-\beta_p}\,\din r
\end{equation}
and hence
\begin{equation}\label{eq:n-one-halfline-equation-less-two}
\left(
|y_p'|^{p-2}y_p'
\right)'
=
\fnu _p(t,y_p,y_p')y_p^{-\beta_p},
\qquad t>0.
\end{equation}

Since $y_p'(0)=0$, the even extension $u_p(x)=y_p(|x|)$ belongs to $C^1(\mathbb R)$. By \eqref{98} and the continuity of its integrand, $|u_p'|^{p-2}u_p'\in C^1(\mathbb R)$, and $u_p$ satisfies \eqref{117} on $\mathbb R$.

Finally, set
\[
I_p:=
\int_0^\infty
\fnu _p(t,y_p(t),y_p'(t))
y_p(t)^{-\beta_p}\,\din t.
\]

By \eqref{eq:n-one-gy-less-two},
\[
I_p\leq {\int_0^\infty G_p(t)\,\din t }  <\infty,
\]
and
\[
y_p'(t)
=
\left[
\int_0^t
\fnu _p(r,y_p(r),y_p'(r))
y_p(r)^{-\beta_p}\,\din r
\right]^{\qpu}   
\to    I_p^{\qpu}  .
\]

By \eqref{97} and condition \ref{83}, $\fnu _p(r_0,1,0)>0$. Hence there
exist $\epsilon>0$ and $c_0>0$ such that
\[
\fnu _p(t,1,0)\geq c_0,
\qquad t\in [r_0,r_0+\epsilon].
\]
Condition \ref{84} and \eqref{eq:n-one-bounds-less-two} give
\[
I_p
\geq
c_0\int^{r_0+\epsilon} _{r_0} U(t)^{-\beta_p}\,\din t
>0.
\]
Thus $y_p'(t)\to I_p^{\qpu}>0$, so $y_p(t)\to\infty$.

For $t>\ell$, $\ftes (t) =t$ and $a'(t)=1$. Therefore, l'Hospital's rule gives
\[
\lim_{t\to\infty}\frac{y_p(t)}{\ftes (t) }
=
\lim_{t\to\infty}y_p'(t)
=I_p^{\qpu}   .
\]
This is \eqref{eq:n-one-asymptotic-less-two}--%
\eqref{eq:n-one-C-less-two}, and
\[
0<C_p   =I_p^{\qpu}   
\leq \left(\int_0^\infty G_p(t)\,\din t \right)  ^{\qpu}   
\leq m_\xi.
\]
\end{proof}

\medskip

\noindent {\bf Example for $\fnu _p$.} Define
\begin{equation*}\label{b4}
J_\ell:=(1+\ell)^2+2(\ell+2)e^{-\ell},\qquad
\kappa:=\frac{\min\{1,m_\xi\}}
{(1+\xi_2^2)(1+\xi_1^2J_\ell)},
\end{equation*}
and
\begin{equation*}\label{b5}
\mathtt F_p(t,u,s)=\mathtt F_1(t,u,s)
:=\kappa e^{-t}(1+u^2)(1+s^2),\qquad t,u,s\geq0.
\end{equation*}

Then \ref{83}--\ref{85} hold for every $1<p<2$. 

%
%%
%%
%
%%
%%
%%%
%%
%%
%%%
%%%
%%

%
%%
%%
%
%%
%%
%%%
%%
%%
%%%
%%%
%%

\section{The limit $p\downarrow1$ when $n=1$}
\label{100}

We assume the following conditions.
\begin{enumerate}[label=$(L_{\arabic*})$]
\item\label{87}
There exists $F_1\in C^{0,\theta}_{\loc}([0,\infty)^3)$ such that
\begin{equation}\label{eq:n-one-F-local-limit}
F_p\to    F_1
\qquad\text{locally uniformly in }[0,\infty)^3,
\end{equation}
and
\begin{equation}\label{eq:n-one-beta-limit}
\frac{\beta_p}{p-1}\to   {\lbp}\in[0,1].
\end{equation}

\item\label{88} $F_1(r_0,t_0,\rho_0)>0 $ for some $(r_0,t_0,\rho_0) \in{(0,\infty)}\times[1,\xi_1(1+\ell)]\times[0,\xi_2]$.

\end{enumerate}

For every $p\in (1,2)$, Proposition \ref{prop:n-one-less-two} gives
\[
u_{p}(x)=y_{p}(|x|),
\qquad
y_{p}(0)=1,
\qquad
y_{p}'(0)=0,
\]
with
\begin{equation}\label{114}
1\leq y_{p}(t)\leq U(t):=\xi_1(1+a(t)),
\qquad
0\leq y_{p}'(t)\leq\xi_2.
\end{equation}

Set \label{116}
\begin{gather}
k_{p}(t) :=F_{p}(t,y_{p}(t),y_{p}'(t))y_{p}(t)^{-\beta_p},
\label{eq:n-one-kj}\\
G_{p}(t):=\int_0^t k_{p}(s)\,\din s
=y_{p}'(t)^{p-1}.
\label{eq:n-one-Gj}
\end{gather}

We have,
\begin{equation}\label{eq:n-one-kj-envelope}
0\leq k_{p}(t)\leq F_{p}(t,U(t),\xi_2),
\qquad t\geq0.
\end{equation}

\subsection{Uniform estimates and subsequential convergence}

We first extract convergent subsequences of the functions $y_{p_j}$, their derivatives $y_{p_j}'$, and the integrated sources $G_{p_j}$. We then use the monotonicity of $G$ to describe the set $\{r>0 \mid G(r)=1\}$. These results provide the limit equation. An additional assumption on the convergence of $(G_{p_j}-1)/(p_j-1)$ is used to identify the nonlinear source on this set.

\begin{proposition}\label{101}
Assume \ref{83}--\ref{85}, and \ref{87}. After passing to a subsequence, there exist $y\in W^{1,\infty}_{\loc}([0,\infty))$, $G\in W^{1,\infty}_{\loc}([0,\infty))$, and $K\in L^\infty_{\loc}([0,\infty))$ such that
\begin{enumerate}[label=$(\roman*)$]
\item \label{102} \begin{gather}
y_{p_j}\to    y
\quad\text{in }C_{\loc}([0,\infty)),
\qquad
y_{p_j}'\stackrel{*}{\rightharpoonup}y'
\quad\text{in }L^\infty_{\loc}(0,\infty),
\label{eq:n-one-general-y-limit}\\
G_{p_j}\to    G
\quad\text{in }C_{\loc}([0,\infty)),
\qquad
k_{p_j}\stackrel{*}{\rightharpoonup}K
\quad\text{in }L^\infty_{\loc}(0,\infty),
\label{eq:n-one-general-flux-limit}\\
G'=K\geq0,
\qquad
G(0)=0,
\qquad
0\leq G\leq1,
\label{eq:n-one-general-G-properties}\\
(1-G)y'=0
\qquad\text{a.e. on }(0,\infty).
\label{105}
\end{gather}

\item \label{103} Define $\mathscr{C}_1:=\{t>0 \mid G(t)=1\}$. If $\mathscr{C}_1\neq\emptyset$, then
\begin{equation}\label{eq:n-one-general-phases}
\left\{\begin{aligned}
&G(t)<1 &&0\leq t< \inf\mathscr{C}_1,\\
&G(t)=1 &&t\geq \inf\mathscr{C}_1.
\end{aligned}\right.
\end{equation}

Moreover,  for every $r>0$ such that $G(r)<1$,
\begin{equation}\label{eq:n-one-general-subcritical-identification}
y(t)=1,
\qquad
K(t)=G'(t)=F_1(t,1,0)
\quad\text{for a.e. }t\in(0,r).
\end{equation}

If $\mathscr{C}_1\neq\emptyset$, then
\begin{equation}\label{eq:n-one-general-critical-source}
K(t)=0
\quad\text{for a.e. }t\in( \inf\mathscr{C}_1 ,\infty).
\end{equation}

Let
\[
u(x):=y(|x|),
\qquad
z(x):=\operatorname{sign}(x)G(|x|),
\qquad
z(0):=0.
\]

Then
\begin{equation}\label{eq:n-one-general-one-laplacian}
\Delta_1u=K(|x|)
\qquad\text{in }\mathbb R
\end{equation}
in the sense of Definition \ref{113}.

\item \label{104} Assume that $\mathscr{C}_1\neq\emptyset$, and let $E\subset(\inf\mathscr{C}_1,\infty)$ be measurable. Suppose that
\begin{equation*}\label{eq:n-one-critical-rate}
a(t):=\lim_{j\to\infty}
\frac{G_{p_j}(t)-1}{p_j-1}\in\mathbb R
\qquad\text{for a.e. }t\in E.
\end{equation*}

Then
\begin{equation}\label{107}
y'(t)=e^{a(t)}
\qquad\text{for a.e. }t\in E,
\end{equation}
and
\begin{equation}\label{eq:n-one-critical-F-zero}
F_1(t,y(t),y'(t))=0
\qquad\text{for a.e. }t\in E.
\end{equation}
\end{enumerate}
\end{proposition}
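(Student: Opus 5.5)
The plan is to reproduce, in one dimension, the scheme of Lemma \ref{30}, Proposition \ref{44} and Lemmas \ref{82} and \ref{65}, with the explicit envelope \eqref{114} in place of \eqref{34}.

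\textbf{Uniform bounds and extraction.} Fix $R>0$.
\begin{itemize}
\item On $[0,R]$ we have $1\le y_p\le U(R)$ and $0\le y_p'\le\xi_2$.
\item By \eqref{eq:n-one-kj-envelope} and the local uniform convergence in \eqref{eq:n-one-F-local-limit}, there exist $p_R>1$ and $C_R<\infty$ with $0\le k_p\le C_R$ on $[0,R]$ for $1<p<p_R$. Here I use that $(t,U(t),\xi_2)$ ranges over a compact set when $t\in[0,R]$.
\item Hence $G_p$ is equi-Lipschitz, since $G_p'=k_p$, and $0\le G_p=(y_p')^{p-1}\le\xi_2^{p-1}$.
\end{itemize}
Arzel\`a--Ascoli, Banach--Alaoglu and a diagonal extraction give $y_{p_j}\to y$ and $G_{p_j}\to G$ locally uniformly, together with $y_{p_j}'\stackrel{*}{\rightharpoonup}y'$ and $k_{p_j}\stackrel{*}{\rightharpoonup}K$. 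The identification of the weak-$*$ limit of $y_{p_j}'$ with $y'$ is done by integrating by parts against test functions, as in Proposition \ref{44}\ref{50}. Passing to the limit in $G_{p_j}(t)=\int_0^tk_{p_j}$ gives $G'=K\ge0$ and $G(0)=0$. Letting $\xi_2^{p_j-1}\to1$ gives $G\le1$. For \eqref{105} I repeat the argument of Proposition \ref{44}\ref{57}:
\begin{itemize}
\item $G_{p_j}y_{p_j}'=(y_{p_j}')^{p_j}$, and $\sup_{0\le s\le\xi_2}|s^{p_j}-s|\to0$;
\item combined with the uniform convergence of $G_{p_j}$ and the weak-$*$ convergence of $y_{p_j}'$, this yields $Gy'=y'$ a.e.
\end{itemize}

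\textbf{Phases and subcritical identification.} Since $K\ge0$, $G$ is nondecreasing. Combined with $G\le1$, once $G$ reaches $1$ it stays equal to $1$, which gives \eqref{eq:n-one-general-phases}. On $(\inf\mathscr C_1,\infty)$, $G$ is constant, so $K=G'=0$ a.e., which is \eqref{eq:n-one-general-critical-source}.

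Now let $G(r)<1$. By monotonicity and uniform convergence there are $\delta<1$ and $j_0$ with $G_{p_j}\le\delta$ on $[0,r]$ for $j\ge j_0$. Then $0\le y_{p_j}'=G_{p_j}^{1/(p_j-1)}\le\delta^{1/(p_j-1)}\to0$ uniformly on $[0,r]$. Since $y_{p_j}(0)=1$, this gives $y_{p_j}\to1$ uniformly on $[0,r]$, so $y=1$ there. Moreover $y_{p_j}^{-\beta_{p_j}}\to1$, because $1\le y_{p_j}\le U(r)$ and $\beta_{p_j}\to0$. Consequently $k_{p_j}\to F_1(t,1,0)$ uniformly on $[0,r]$, and by uniqueness of weak-$*$ limits $K=F_1(\cdot,1,0)$ a.e. on $(0,r)$.

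\textbf{The $1$-Laplacian and part (iii).} Set $z(x)=\operatorname{sign}(x)G(|x|)$. Because $G$ is Lipschitz with $G(0)=0$, $z$ is Lipschitz on $\mathbb R$, with $z'(x)=G'(|x|)=K(|x|)$ a.e. and $|z|\le1$. Since $u$ is Lipschitz, the pairing reduces to $(z,Du)=zu'\,\mathcal L^1$, as in \eqref{93}. For $x>0$,
\[
zu'=G\,y'=y'=|u'|,
\]
using \eqref{105} and $y'\ge0$; the case $x<0$ is symmetric. By \eqref{122} this gives $(z,Du)=|Du|$. Thus $\Delta_1u=K(|x|)$ holds with the sign convention $\operatorname{div}z=K$ in Definition \ref{113}.

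For (iii) I follow Lemma \ref{65}:
\begin{itemize}
\item $y_{p_j}'(t)=\exp\bigl(\log(1+\delta_j)/(p_j-1)\bigr)\to e^{a(t)}$ a.e. on $E$;
\item dominated convergence with the bound $\xi_2$ gives $y'=e^{a}$ a.e. on $E$;
\item $k_{p_j}(t)\to F_1(t,y(t),e^{a(t)})$ pointwise a.e. on $E$, since $y_{p_j}^{-\beta_{p_j}}\to1$ and $F_{p_j}\to F_1$ locally uniformly;
\item dominated convergence gives $K=F_1(\cdot,y,y')$ a.e. on $E$, and since $K=0$ there, \eqref{eq:n-one-critical-F-zero} follows.
\end{itemize}

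The main delicate point is the local uniform bound on $k_p$ and the passage $y_p^{-\beta_p}\to1$. Unlike the case $n\ge2$, no $w$-factor appears: $y_p\ge1$ is bounded on compacts and $\beta_p\to0$. The other delicate point is getting $G_{p_j}\le\delta<1$ uniformly on a whole interval $[0,r]$ from $G(r)<1$, which uses the monotonicity of $G$.
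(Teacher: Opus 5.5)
Your proposal is correct and follows the paper's proof essentially step by step: the envelope bound $0\le k_p\le F_p(t,U(t),\xi_2)$ together with local uniform convergence, the Arzel\`a--Ascoli/Banach--Alaoglu extraction, the estimate $\sup_{0\le s\le\xi_2}|s^{p_j}-s|\to0$ giving $(1-G)y'=0$, the uniform gap $G_{p_j}\le\delta<1$ on $[0,r]$ from the monotonicity of $G$, and the Lemma~\ref{65}-type argument for part (iii). The only cosmetic difference is that you obtain $y=1$ on $[0,r]$ directly from $y_{p_j}'\to0$ uniformly and $y_{p_j}(0)=1$, whereas the paper deduces it from \eqref{105}; both routes are valid.
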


\begin{proof}
\noindent \ref{102}  Fix $R>0$. The bounds supplied by Proposition
\ref{prop:n-one-less-two} give
\[
1\leq y_{p_j}(t)\leq U(R),
\qquad
0\leq y_{p_j}'(t)\leq\xi_2,
\qquad 0\leq t\leq R.
\]
Arzel\`a--Ascoli and a diagonal argument yield
\eqref{eq:n-one-general-y-limit}. By
\eqref{eq:n-one-F-local-limit},
\eqref{eq:n-one-kj}, and $y_{p_j}^{-\beta_j}\leq1$,
\[
0\leq k_{p_j}(t)\leq C_R,
\qquad 0\leq t\leq R,
\]
where $C_R$ is independent of $j$. Since
\[
G_{p_j}'=k_{p_j},
\qquad
G_{p_j}(0)=0,
\qquad
0\leq G_{p_j}\leq\xi_2^{p_j-1},
\]
Arzel\`a--Ascoli, Banach--Alaoglu, and a diagonal argument give
\eqref{eq:n-one-general-flux-limit} and
\eqref{eq:n-one-general-G-properties}.

Equation \eqref{eq:n-one-Gj} gives
\[
G_{p_j}y_{p_j}'=(y_{p_j}')^{p_j}.
\]

Set
\[
\delta_j:=\sup_{0\leq s\leq\xi_2}|s^{p_j}-s|.
\]
Then
\[
\delta_j\to   0,
\qquad
|G_{p_j}y_{p_j}'-y_{p_j}'|\leq\delta_j.
\]
The locally uniform convergence $G_{p_j}\to G$ and the weak-* convergence
$y_{p_j}'\stackrel{*}{\rightharpoonup}y'$ imply
\[
Gy'=y'
\qquad\text{a.e. on }(0,\infty),
\]
which proves \eqref{105}.

\medskip

\noindent \ref{103} Since $G$ is continuous and nondecreasing,
$\mathscr{C}_1\neq\emptyset$ implies
\eqref{eq:n-one-general-phases}.

Fix $r>0$ such that $G(r)<1$. Since $G$ is nondecreasing,
$G(t)\leq G(r)<1$ for $t\in[0,r]$. Hence \eqref{105} gives
$y'=0$ a.e. on $(0,r)$ and thus $y=1$ on $[0,r]$. For all
sufficiently large $j$,
\[
0\leq G_{p_j}(t)
\leq G(t)+\frac{1-G(r)}{2}
\leq\frac{1+G(r)}{2}<1,
\qquad t\in[0,r].
\]
Therefore,
\[
0\leq y_{p_j}'(t)
=G_{p_j}(t)^{1/(p_j-1)}
\leq\left(\frac{1+G(r)}{2}\right)^{1/(p_j-1)}
\to   0
\]
uniformly on $[0,r]$. Moreover,
\[
0\leq\beta_j\log y_{p_j}(t)
\leq(p_j-1)\log U(r)
\to   0
\]
uniformly on $[0,r]$. Thus $y_{p_j}^{-\beta_j}\to1$ uniformly on
$[0,r]$, and \eqref{eq:n-one-F-local-limit} yields
\[
k_{p_j}\to    F_1(\cdot,1,0)
\qquad\text{uniformly on }[0,r].
\]
Together with \eqref{eq:n-one-general-flux-limit}, this proves
\eqref{eq:n-one-general-subcritical-identification}.

If $\mathscr{C}_1\neq\emptyset$, then $G=1$ on
$(\inf\mathscr{C}_1  ,\infty)$, so $K=G'=0$ there. This
proves \eqref{eq:n-one-general-critical-source}.

Since $G(0)=0$, one has
$z\in W^{1,\infty}_{\loc}(\mathbb R)$ and $z'=K(|x|)$ a.e. in
$\mathbb R$. Moreover, by \eqref{105},
\[
zu'=G(|x|)|u'|=|u'|
\quad\text{a.e. in }\mathbb R.
\]
Hence
\[
|z|\leq1,
\qquad
(z,Du)=|Du|,
\]
see \eqref{93}. This proves \eqref{eq:n-one-general-one-laplacian}.

\medskip

\noindent \ref{104} For a.e. $t\in E$, one has
$G_{p_j}(t)\to1$ and $G_{p_j}(t)>0$ for all sufficiently large $j$. The   calculation used in the proof of \eqref{106} gives
\[
y_{p_j}'(t)
=\exp\left(\frac{\log G_{p_j}(t)}{p_j-1}\right)
\to    e^{a(t)}.
\]

For every $R>0$, the bound $0\leq y_{p_j}'\leq\xi_2$, dominated
convergence on $E\cap(0,R)$, and
\eqref{eq:n-one-general-y-limit} imply
\[
y'(t)=e^{a(t)}
\qquad\text{for a.e. }t\in E.
\]

Furthermore, for a.e. $t\in E\cap(0,R)$,
\[
y_{p_j}(t)\to y(t),
\qquad
y_{p_j}'(t)\to y'(t),
\qquad
0\leq\beta_j\log y_{p_j}(t)
\leq(p_j-1)\log U(R)\to0.
\]
Consequently,
\[
k_{p_j}(t)\to    F_1(t,y(t),y'(t))
\qquad\text{for a.e. }t\in E.
\]
The sequence $\{k_{p_j}\}$ is uniformly bounded on $[0,R]$.
Dominated convergence and \eqref{eq:n-one-general-flux-limit} give
\[
K(t)=F_1(t,y(t),y'(t))
\qquad\text{for a.e. }t\in E.
\]
Since \eqref{eq:n-one-general-critical-source} gives $K=0$ a.e. on
$E$, \eqref{eq:n-one-critical-F-zero} follows.
\end{proof}

\subsection{Behavior of the limit and source identification}
The condition $F_1(r_0,t_0,\rho_0)>0$ in \ref{88}, together with the total integral bound, yields $0\leq G_{p_j}\leq\delta$ on $[0,r_0]$ for some $\delta\in(0,1)$ and all sufficiently large $j$. Since $y_{p_j}'=G_{p_j}^{1/(p_j-1)}$, this estimate gives $u_{p_j}\to1$ in $C^1([-r_0,r_0])$. The convergence of $y_{p_j}$ and $y_{p_j}'$ then allows us to identify the limiting source as $F_1(\cdot,1,0)$.
\begin{theorem}\label{thm:n-one-p-to-one}
Assume \ref{83}--\ref{85},\ref{87}, and \ref{88}. There exist $\delta  \in(0,1)$ and  $j_{0}    \in\mathbb N$, such that
\begin{equation}\label{eq:n-one-exponential-collapse}
\|y_{p_j}'\|_{L^\infty(0,{r_0}    )}
\leq \delta  ^{1/(p_j-1)},
\qquad
\|y_{p_j}-1\|_{L^\infty(0,{r_0}    )}
\leq {r_0}    \delta  ^{1/(p_j-1)}
\end{equation}
for every $j\geq j_{0}    $.

 Consequently,
\begin{gather}
u_{p_j}\to   1
\qquad\text{in }C^1 ([-r_0,r_0] ) \label{eq:n-one-C1-limit},\\
u_{p_j}^{p_j-1}\to   1,
\qquad
u_{p_j}^{-\beta_j}\to   1
\qquad\text{in }C ([-r_0,r_0]     ), \label{eq:n-one-power-limits}\\
k_{p_j}\to    F_1(\cdot,1,0)
\qquad\text{in }C  ([0,r_0]). \label{eq:n-one-source-limit}
\end{gather}

Furthermore,
\begin{gather}
G_{p_j}\to    \int_0^tF_1(s,1,0)\,\din s
\qquad\text{in }C^1([0,r_0]),\label{eq:n-one-G-C1-limit}\\
{0\leq \int_0^tF_1(s,1,0)\,\din s<1
\qquad\text{for every }t\in[0,r_0],}\label{eq:n-one-subcritical-limit}
\end{gather}

For
\begin{equation}\label{eq:n-one-zj}
z_{p_j}(x):=|u_{p_j}'(x)|^{p_j-2}u_{p_j}'(x)
=\operatorname{sign}(x)G_{p_j}(|x|),
\qquad z_{p_j}(0):=0,
\end{equation}
one has
\begin{equation}\label{eq:n-one-z-limit}
z_{p_j}\to    z
\qquad\text{in }C^1([-r_0,r_0]     ),
\qquad
z(x)=\operatorname{sign}(x)G(|x|),
\qquad z(0)=0,
\end{equation}
and
\begin{equation}\label{eq:n-one-limit-flux-equation}
z'(x)=F_1(|x|,1,0),
\qquad
|z(x)|<1,
\qquad x\in[-r_0,r_0]     .
\end{equation}

Thus,  
\begin{equation}\label{eq:n-one-limit-one-laplacian}
\Delta_1u
=F_1(|x|,u,|u'|) 
=F_1(|x|,1,0) 
\qquad\text{in }{ (-r_0,r_0)}
\end{equation}
in the local sense of Definition \ref{113}.
\end{theorem}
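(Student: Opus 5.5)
The plan is to obtain a uniform subcritical bound $G_{p_j}\le\delta<1$ on $[0,r_0]$ from the total integral constraint \ref{85}, strengthened by the positivity assumption \ref{88}. Every other assertion then follows as in the proof of Proposition \ref{101}\ref{103}.

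\emph{Step 1: the bound $G_{p_j}\le\delta$ on $[0,r_0]$.} This is the key step. By \eqref{eq:n-one-kj-envelope}, for every $t\ge0$,
\[
G_{p_j}(t)\le\int_0^{t}F_{p_j}(s,U(s),\xi_2)\,\din s .
\]
Combining this with \ref{85} gives
\[
G_{p_j}(r_0)\le m_\xi^{p_j-1}-\int_{r_0}^{\infty}F_{p_j}(s,U(s),\xi_2)\,\din s .
\]
So it suffices to bound the tail integral from below by a positive constant independent of $j$.

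By \ref{88}, the continuity of $F_1$ and the locally uniform convergence \eqref{eq:n-one-F-local-limit}, there are $\varepsilon>0$ and $c>0$ such that $F_{p_j}(s,t_0,\rho_0)\ge c$ for $s\in[r_0,r_0+\varepsilon]$ and all large $j$. For $s\ge r_0$ we have $t_0\le\xi_1(1+\ell)\le U(s)$ and $\rho_0\le\xi_2$. Hence the monotonicity in \ref{84} gives $F_{p_j}(s,U(s),\xi_2)\ge c$ there. The one point I must check is that \ref{84} allows comparing $t_0$ with $U(s)$, since the interval in \ref{84} starts at $1$ and $t_0\ge1$.

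The tail integral is then at least $c\varepsilon$, and $m_\xi^{p_j-1}\to1$. Since $G_{p_j}$ is nondecreasing, it follows that
\[
\sup_{[0,r_0]}G_{p_j}=G_{p_j}(r_0)\le 1-\tfrac{c\varepsilon}{2}=:\delta<1
\]
for $j\ge j_0$.

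\emph{Step 2: the estimates \eqref{eq:n-one-exponential-collapse}.} Using $y_{p_j}'=G_{p_j}^{1/(p_j-1)}$ from \eqref{eq:n-one-Gj}, Step 1 gives the first estimate directly. Integrating from $0$ with $y_{p_j}(0)=1$ gives the second. This yields \eqref{eq:n-one-C1-limit} for the even extension $u_{p_j}$.

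\emph{Step 3: the power and source limits.} From $1\le y_{p_j}\le 1+r_0\delta^{1/(p_j-1)}$ we get
\[
0\le(p_j-1)\log y_{p_j}\to0\quad\text{and}\quad\beta_{p_j}\log y_{p_j}\to0
\]
uniformly on $[0,r_0]$, using $\beta_p\le p-1$. This is \eqref{eq:n-one-power-limits}. Uniform convergence of the arguments $(t,y_{p_j},y_{p_j}')\to(t,1,0)$ on a compact set, together with \eqref{eq:n-one-F-local-limit} and the uniform continuity of $F_1$ there, gives \eqref{eq:n-one-source-limit}.

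\emph{Step 4: the limits of $G_{p_j}$ and $z_{p_j}$.} Integrating the source limit yields \eqref{eq:n-one-G-C1-limit}, since $G_{p_j}'=k_{p_j}$ converges uniformly. Passing to the limit in Step 1 gives $\int_0^tF_1(s,1,0)\,\din s\le\delta<1$, which is \eqref{eq:n-one-subcritical-limit}. The $C^1$ convergence of $z_{p_j}=\operatorname{sign}(x)G_{p_j}(|x|)$ then follows. It is $C^1$ across $0$ because $G_{p_j}(0)=0$ and $G_{p_j}'(0)=k_{p_j}(0)$, so both one-sided derivatives of $z_{p_j}$ at $0$ equal $k_{p_j}(0)$. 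This gives \eqref{eq:n-one-z-limit} and \eqref{eq:n-one-limit-flux-equation}, with $|z|\le\delta<1$.

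\emph{Step 5: the $1$-Laplacian equation.} Finally, $u\equiv1$ on $(-r_0,r_0)$, so $Du=0$ there. The calibration $(z,Du)=0=|Du|$ is then trivial, via the pairing formula \eqref{93}. Together with $z'=F_1(|x|,1,0)$ and $|z|<1$, this verifies \eqref{eq:n-one-limit-one-laplacian} in the local sense of Definition \ref{113}.
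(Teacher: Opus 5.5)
Your proposal is correct and follows essentially the same route as the paper. In both, a positive lower bound on $\int_{r_0}^{\infty}F_{p_j}(s,U(s),\xi_2)\,\din s$, obtained from \ref{88} and the monotonicity in \ref{84}, is combined with \ref{85} to give $G_{p_j}\le\delta<1$ on $[0,r_0]$; everything else then follows from $y_{p_j}'=G_{p_j}^{1/(p_j-1)}$ and uniform convergence. The differences are only cosmetic: you apply the monotonicity directly to $F_{p_j}$ on $[r_0,r_0+\varepsilon]$, whereas the paper argues through $F_1$ and the limit integral $c_0=\int_{r_0}^{r_1}F_1(t,U(t),\xi_2)\,\din t$.
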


\begin{proof}
\noindent\textbf{Step 1.} Local uniform convergence implies that
$F_1$ inherits the monotonicity in \ref{84}. By \ref{84}, \ref{87}, and \ref{88}, there exist $r_1>r_0$ and $j_0\in\mathbb N$ such that,
for every $t\in[r_0,r_1]$ and $j\geq j_0$,
\begin{gather*}
F_1(t,U(t),\xi_2)
\geq F_1(t,t_0,\rho_0)
>\frac12F_1(r_0,t_0,\rho_0)>0.
\end{gather*}

By \eqref{eq:n-one-F-local-limit},
\[
\int_{r_0}^{r_1}F_{p_j}(t,U(t),\xi_2)\,\din t
\to   
c_0:=\int_{r_0}^{r_1}F_1(t,U(t),\xi_2)\,\din t>0.
\]

Condition \ref{85} and
$m_\xi^{p_j-1}\to1$ give
\[
0<c_0    
=\lim_{j\to\infty}\int_{r_0}    ^{{r_1}}F_{p_j}  (t,U(t),\xi_2)\,\din t
\leq\limsup_{j\to\infty}\int_0^\infty F_{p_j}  (t,U(t),\xi_2)\,\din t  
\leq1.
\]

Hence, for all sufficiently large $j$,
\[
\int_0^\infty F_{p_j}  (t,U(t),\xi_2)\,\din t  \leq1+\frac{c_0    }{4},
\qquad
\int_{r_0}    ^{{r_1}}F_{p_j}  (t,U(t),\xi_2)\,\din t\geq\frac{3c_0    }{4}.
\]

Therefore,
\begin{equation}\label{eq:n-one-compact-flux-gap}
\begin{aligned}
\int_0^{r_0}     F_{p_j}  (t,U(t),\xi_2)\,\din t
\leq & \int_0^\infty F_{p_j}  (t,U(t),\xi_2)\,\din t  -\int_{r_0}    ^{{r_1}}F_{p_j}  (t,U(t),\xi_2)\,\din t
\\
\leq & 1-\frac{c_0    }{2}
=:\delta {\in(0,1)}.
\end{aligned}
\end{equation}

For $0\leq t\leq {r_0}    $, \eqref{eq:n-one-Gj},
\eqref{eq:n-one-kj-envelope}, and
\eqref{eq:n-one-compact-flux-gap} yield
\[
0\leq G_{p_j}(t)\leq \delta  .
\]

Thus
\begin{equation}\label{109}
0\leq y_{p_j}'(t)=G_{p_j}(t)^{1/(p_j-1)}
\leq \delta  ^{1/(p_j-1)},
\end{equation}
and
\[
0\leq y_{p_j}(t)-1
=\int_0^t y_{p_j}'(s)\,\din s
\leq {r_0}    \delta  ^{1/(p_j-1)}.
\]
This proves \eqref{eq:n-one-exponential-collapse} and
\eqref{eq:n-one-C1-limit}.

Local boundedness \eqref{114} of $u_{p_j}$, \eqref{eq:n-one-C1-limit}, and
$p_j-1\to0$ give
\begin{equation}\label{118}
(p_j-1)\log u_{p_j}\to   0,\quad
\beta _{p_j}\log u_{p_j}\to   0.
\end{equation}
The convergence is locally uniform in $[-r_0,r_0]$; hence
\eqref{eq:n-one-power-limits} follows.

\medskip

\noindent\textbf{Step 2.} From \eqref{118}, \eqref{eq:n-one-F-local-limit}, and
\eqref{eq:n-one-exponential-collapse}, we have
\[
F_j(t,y_{p_j}(t),y_{p_j}'(t))
\to    F_1(t,1,0)
\qquad\text{uniformly on }[0,{r_0}    ].
\]
This proves \eqref{eq:n-one-source-limit}.

Since
\[
G_{p_j}'=k_{p_j},
\qquad
G_{p_j}(0)=0,
\]
equation \eqref{eq:n-one-G-C1-limit} follows.

For every $t\in[0,{r_0}    ]$, \eqref{109} gives
\[
\int_0^tF_1(s,1,0)\,\din s=\lim_{j\to\infty}G_{p_j}(t)\leq \delta  <1.
\]
Thus \eqref{eq:n-one-subcritical-limit} holds.

\medskip
\noindent\textbf{Step 3.}
Equations \eqref{eq:n-one-zj} and
\eqref{eq:n-one-G-C1-limit} give \eqref{eq:n-one-z-limit}. Moreover,
\[
z_j'(x)=k_{p_j}(|x|)
\to    F_1(|x|,1,0)
\]
{uniformly on $[-r_0,r_0]$}, including at $x=0$. Hence
\eqref{eq:n-one-limit-flux-equation} holds.

Finally, on ${(-r_0,r_0)}$,
\[
u\equiv1,
\qquad
Du=0,
\qquad
(z,Du)=0=|Du|.
\]
Definition \ref{113} gives
\eqref{eq:n-one-limit-one-laplacian}. \end{proof}

%%
%%
%%
%%
%%%
%%
%%
%%%

%%
%%
%%
%%
%%%
%%
%%
%%%

%%%%%%%%%%%%%%%%%%%%%%%%%%%%%%%%%%%%%%%%%%%%%%%%%%%%%%%%%%%%%%%%%%%%%%%%%%%%%%%%%%%%%%%%%%%%%%%%%%%%%%%%%%%%%%%%%%%%%%%%%%%%%%%%%%%%%%%%%%%%%%%%%%%%%%%%%%%%%%%%%%%%%%%%%%%%%%%%%%%%%%%%%%%%%%%%%%%%%%%%%%%%%%%%%%%%%%%%%%%%%%%%%%%%%%%%%%%%%%%%%%%%%%%%%%%%%%%%%%%%%%%%%%%%%%%%%%%%%%%%%%%%%%%%%%%%%%%%%%%%%%%%%%%

\vspace{1cm}

%\noindent {\bf Author contributions:} All authors have contributed equally to this work for writing, review and editing. All authors have read and agreed to the published version of the manuscript.

\noindent {\bf Funding:} This work was supported by the Conselho Nacional de Desenvolvimento Científico e Tecnológico (CNPq), Grant No. 150680/2025-2.

\noindent {\bf Data Availability:} No data were used for the research described in the article.

\noindent {\bf Declarations}

\noindent {\bf Conflict of interest:} The author declares no conflict of interest.

%\noindent {\bf Declaration of generative AI and AI-assisted technologies in the writing process:} During the preparation of this work, the authors used the AI ChatGPT and the AI Gemini to correct grammar and orthography in the text. After using this tool/service, the authors reviewed and edited the content as needed and takes full responsibility for the content of the publication.

%%%%%%%%%%%%%%%%%%%%%%%%%%%%%%%%%%%%%%%%%%%%%%%%%%%%%%%%%%%%%%%%%%%%%%%%%%%%%%%%%%%%%%%%%%%%%%%%%%%%%%%%%%%%%%%%%%%%%%%%%%%%%%%%%%%%%%%%%%%%%%%%%%%%%%%%%%%%%%%%%%%%%%%%%%%%%%%%%%%%%%%%%%%%%%%%%%%%%%%%%%%%%%%%%%%%%%%%%%%%%%%%%%%%%%%%%%%%%%%%%%%%%%%%%%%%%%%%%%%%%%%%%%%%%%%%%%%%%%%%%%%%%%%%%%%%%%%%%%%%%%%%%%%%%%%%%%%%%%%%%%%%%%%%%%%%%%%%%%%%%%%%%%%%%%%%%%%%%%%%%%%%%%%%

%\bibliographystyle{plain}
 
 \bibliographystyle{abbrv}

   \bibliography{ref}

\end{document}